\newtheorem{thm}{Theorem}[section]
\newtheorem{cor}[thm]{Corollary}
\newtheorem{lem}[thm]{Lemma}
\newtheorem{prop}[thm]{Proposition}
\newtheorem{fact}[thm]{Fact}
\newtheorem{nota}[thm]{Notation}
\theoremstyle{definition}
\newenvironment{manualtheorem}[1]{%
  \manualtheoreminner
}{\endmanualtheoreminner}
\theoremstyle{remark}
\newtheorem{rem}[thm]{Remark}
\newtheorem{remi}[thm]{Reminder}
\numberwithin{equation}{section}
\newcommand{\eps}{\varepsilon}
\newcommand{\fS}{\mathfrak{S}}
\newcommand{\fA}{\mathfrak{A}}
\DeclareMathOperator\charak{char}
\newcommand{\ra}{\rightarrow}
\DeclareMathOperator\Gal{Gal}
\newcommand\oline[1] {{\overline{#1}}}
\newcommand\Spec{{\operatorname{Spec}}}
\newcommand\Br{{\operatorname{Br}}}
\newcommand\Brun{{\operatorname{Br_{un}}}}
\newcommand\out{{\operatorname{Out}}}
\newcommand\disc{{\operatorname{disc}}}
\newcommand\den{{\operatorname{den}}}
\newcommand\et{{\operatorname{\acute{e}t}}}
\newcommand\ab{{\operatorname{ab}}}
\newcommand\sh{{\operatorname{sh}}}
\newcommand\Bij{{\operatorname{Bij}}}
\newcommand\cd{{\operatorname{cd}}}
\newcommand\Hom{{\operatorname{Hom}}}
\newcommand\Sym{{\operatorname{Sym}}}
\newcommand\SchSL{{\operatorname{SL}}}
\newcommand\SchGL{{\operatorname{GL}}}
\newcommand\Aut{{\operatorname{Aut}}}
\newcommand\Sec{{\operatorname{Sec}}}
\newcommand\PGL{{\operatorname{PGL}}}
\renewcommand\Im{{\operatorname{Im}}}
\newcommand\pr{{\operatorname{pr}}}
\newcommand\coh{{\operatorname{H}}}
\newcommand\coc{{\operatorname{Z}}}
\newcommand\Pic{{\operatorname{Pic}}}
\newcommand\Ind{{\operatorname{Ind}}}
\newcommand\Res{{\operatorname{Res}}}
\newcommand\cont{{\operatorname{cont}}}
\newcommand\GL[1][n] {{\operatorname{GL}_{#1}}}
\DeclareMathOperator\PSL{PSL}
\DeclareMathOperator\He{He}
\def\blfootnote{\gdef\@thefnmark{}\@footnotetext}
\newcommand\nnfootnote[1]{%
  \begin{NoHyper}
  \renewcommand\thefootnote{}\footnote{#1}%
  \addtocounter{footnote}{-1}%
  \end{NoHyper}
}
\begin{document}

\nnfootnote{\textit{Date}: \today}

\title[Homogenous spaces with non-solvable stabilisers] 
{The Grunwald problem and homogeneous spaces with non-solvable stabilisers}

\author{Elyes Boughattas}
\address{Department of Mathematical Sciences, University of Bath - Claverton Down, Bath, BA2~7AY, United Kingdom}
\email{eb2751@bath.ac.uk}
\def\technion{Department of Mathematics, Technion - Israel Institute of Technology, Haifa 32000, Israel}
\author{Danny Neftin}
\address{\technion}
\email{dneftin@technion.ac.il}

\begin{abstract}
We give an affirmative answer to the Grunwald problem for new families of non-solvable finite groups $G$, away from the set of primes dividing $|G|$. Furthermore, we show that such $G$ verify the condition (BM), that is, the Brauer-Manin obstruction to weak approximation is the only one for quotients of $\SchSL_n$ by $G$. These new families include extensions of groups satisfying (BM) by kernels which are products of symmetric groups~$\fS_m$, with $m\neq 2,6$, and alternating groups $\fA_5$. We also investigate (BM) for small groups by giving an explicit list of small order groups for which (BM) is unknown and we show that for many of them (BM) holds under Schinzel’s hypothesis.

\end{abstract}

\maketitle
\section{Introduction}

The Grunwald problem is a stronger version of the Inverse Galois Problem (IGP) originating in the classical problem of determining which division algebras admit a $G$-crossed product structure for a given finite group $G$, see \cite{Schacher,Sonn, NP, HHK, RS}. 
Further recent interest in the problem arose from its relation to the regular IGP \cite{DG,KLN} and weak approximation \cite{Harari,MR3668767,HW1}. 


The problem asks for the possible local behaviour of Galois extensions~$L/k$ with finite Galois group $G$, that is, the possibilities for their completions $L_v/k_v$ at finitely many places~$v$ of $k$. It may be stated as follows. Given a finite set of places $S$ of a number field $k$, and Galois field extensions $L^{(v)}/k_v$ for~$v\in S$ with embeddings $\Gal(L^{(v)}/k_v)\ra G$, determine if there exists a Galois extension~$L/k$ with $\Gal(L/k)\simeq G$ such that $L_v\cong L^{(v)}$ for all $v\in S$? 

The Grunwald-Wang theorem \cite{MR33801} answers the problem affirmatively when $G$ is abelian and $S$ does not contain a place dividing $2$. Nevertheless, in \cite{MR0026992}, a counterexample with $k=\textbf{Q}$, $2\in S$ and $G=\mathbf{Z}/8\mathbf{Z}$ is given by Wang. In fact, Grunwald problems are expected to be solvable whenever the places of $S$ do not divide the order of $G$, a property also known as the {\it tame approximation property} for $G$ over $k$ \cite{MR3668767}. The answer at places dividing~$|G|$ is less clear, cf.\ \cite[\S3.1-3.2]{MR4458238} and \cite{RM} for recent work on certain groups~$G$. 

A main approach to the IGP and the Grunwald problem originates in Noether's construction, where $G$ is equipped with a free action on a rational space and the corresponding quotient is considered. 
Here we consider more generally a finite algebraic $k$-group $G$, choose an embedding $G\xhookrightarrow{\iota}\SchSL_{n,k}$, and let $X:=\SchSL_{n,k}/G$ be the quotient. 
The solvability of all Grunwald problems for a constant group $G$ is then equivalent to the \textit{weak approximation}~(WA) property on $X$ \cite{Harari}, that is, to the set $X(k)$ of $k$-rational points being dense in  $X(k_\Omega):=\prod_{v\in \Omega_k}X(k_v)$, where~$\Omega_k$ is the set of places of $k$ and the product is endowed with the product topology. 

In all known instances where (WA) fails, the failure is explained by the {\it Brauer--Manin set}. It is well known that $X(k)$ is contained in a closed subset $X(k_{\Omega})^{\Brun(X)}\subseteq X(k_\Omega)$ cut out by the Brauer--Manin obstruction, see \S\ref{sec:rational-BM}. It may happen that the latter containment is strict, so that (WA) fails. It is conjectured that $X(k)$ is in fact dense in $X(k_{\Omega})^{\Brun(X)}$ for $X$ as above. When this holds, 
we say that {\it the Brauer--Manin obstruction to weak approximation is the only one on~$X$} and 
that \textit{$G$ verifies (BM)}.
This does not depend on~$n$, nor on the embedding~$\iota$ by the "no-name" lemma \cite[Corollary~3.9]{MR2348904} and \cite[Proposition~13.3.11]{CTS}. 
Furthermore, if a constant group $G$ verifies (BM), 
then the tame approximation property holds for $G$ \cite[Corollary 6.3]{MR4014281}.\\    


The property (BM) is known for many solvable groups such as abelian $k$-groups by Borovoi \cite{MR1390687}, split extensions of $k$-groups that verify (BM) by an abelian group by Harari \cite{Harari}, 
and for finite supersolvable $k$-groups by Harpaz--Wittenberg \cite{HW1}. However, little is known about (BM) for non-solvable groups. In fact the known 
examples of such groups are the symmetric groups $\fS_n$, the alternating group $\fA_5$ \cite{MR1018955}, the simple group of Lie type $\mathrm{PSL}_2(\mathbf{F}_7)$ \cite{mestre2005correspondances} and, more generally, groups for which a generic polynomial exists, that is, when the quotient $X$ is retract rational \cite[Chapter~5]{JLY}. 

In the present paper, we show that group extensions of a group verifying (BM) by certain non-solvable kernels~$N$, such as powers of $\fA_5$ and $\fS_n$, also verify (BM), 
yielding new families of non-solvable groups verifying (BM) and having the tame approximation property.
\begin{thm}\label{bmgroups}
Let $k$ be a number field. Consider a short exact sequence of finite algebraic $k$-groups
\vskip -0.4cm
\begin{equation}\label{equ:exact}
\begin{tikzcd}[ampersand replacement=\&]
1 \arrow[r] \& N \arrow[r] \& E\arrow[r] \& Q\arrow[r] \& 1,
\end{tikzcd}
\end{equation}
with $N(\oline k) \simeq\fA_5^{t_0}\times\prod_{i=1}^s\fS_{n_i}^{t_i}$ 
for $s,t_0,t_1\dots,t_s\in \mathbb N\cup\{0\}$ and $n_1,\dots,n_s\in\mathbb N\setminus \{2,6\}$. 
If~$Q$ verifies (BM) over $k$, then $E$ also verifies (BM) over $k$. 

In particular, when $E$ is constant and $S$ is a finite set of primes coprime to $|E|$, all Grunwald problems for $E$ over $S$ are solvable, that is, $E$ has tame approximation over $k$.     
\end{thm}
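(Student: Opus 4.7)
The plan is to argue by induction on the total number $t_0 + t_1 + \cdots + t_s$ of simple blocks in the decomposition of $N(\overline{k})$. Each block $\fA_5^{t_0}$ and $\fS_{n_i}^{t_i}$ is a characteristic subgroup of $N(\overline{k})$: the perfect radical $[N,N](\overline{k}) \simeq \fA_5^{t_0} \times \prod_i \fA_{n_i}^{t_i}$ is characteristic, its iso-typical components corresponding to pairwise non-isomorphic simple factors are characteristic in it, and one recovers each block of $N$ either directly or as the centraliser in $N$ of the others. Consequently each block descends to a normal algebraic $k$-subgroup $N_0 \lhd E$; replacing $E$ by the extension
$$ 1 \to N/N_0 \to E/N_0 \to Q \to 1, $$
which has strictly fewer blocks, and appealing to the inductive hypothesis reduces the problem to the case where $N$ is itself a $k$-form of a single block, namely $\fS_n$ with $n \neq 2, 6$ or $\fA_5$.

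For the single-block case, I would set up a fibration as follows. Choose faithful embeddings $\iota \co E \hookrightarrow \SchSL_n$ and $j \co Q \hookrightarrow \SchSL_m$, let $E$ act diagonally on $\SchSL_n \times \SchSL_m$ via $\iota$ on the first factor and via $j \circ (E \twoheadrightarrow Q)$ on the second, and put
$$ W := (\SchSL_n \times \SchSL_m)/E. $$
The first projection realises $W$ as a Zariski-locally trivial $\SchSL_m$-torsor over $X = \SchSL_n/E$, and by the ``no-name'' lemma cited in the introduction, (BM) for $W$ is equivalent to (BM) for $X$. The second projection descends to a smooth surjection
$$ \pi \co W \to \SchSL_m/Q $$
which realises $W$ as the contracted product $\SchSL_m \times^Q (\SchSL_n/N)$, i.e.\ the twist of $\SchSL_n/N$ by the canonical $Q$-torsor $\SchSL_m \to \SchSL_m/Q$ along the outer action $Q \to \out(N)$ induced by conjugation in $E$.

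The heart of the argument is to invoke the Harpaz--Wittenberg fibration theorem \cite{HW1} on $\pi$. The base $\SchSL_m/Q$ verifies (BM) by hypothesis, and the crucial input for the fibres is the following: $\out(\fS_n) = 1$ for $n \neq 2, 6$, so the outer action is trivial and every geometric fibre of $\pi$ is isomorphic to $\SchSL_n/\fS_n$, which is retract rational by \cite[Chapter~5]{JLY}; and $\out(\fA_5) \simeq \Z/2\Z$, giving at most two $k$-forms of $\SchSL_n/\fA_5$, both retract rational thanks to the explicit generic polynomial. Since retract rational varieties have trivial unramified Brauer group, the fibres satisfy weak approximation over every field of definition, supplying the hypothesis required to upgrade (BM) on the base to (BM) on~$W$.

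The main technical obstacle lies in verifying the precise hypotheses of the fibration theorem in this non-solvable setting: choosing smooth proper compactifications of $W$ and $\SchSL_m/Q$ to which the theorem applies, ensuring that a sufficiently dense set of rational points of the base carries fibres with the required local-global behaviour, and tracking the $\fA_5$ twists arising from non-trivial classes in $\coh^1(\kappa(P), \out(\fA_5))$ at points $P$ of the base. Once (BM) is established for $E$, the tame approximation statement for constant $E$ follows immediately from \cite[Corollary~6.3]{MR4014281}.
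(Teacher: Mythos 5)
Your fibration setup in the second paragraph is essentially the one the paper uses inside its dévissage theorem (Theorem~\ref{semidirectBM}), and invoking retract rationality of the fibres to push (BM) from the base up to $E$ is the right strategy. But there are two genuine gaps in the reduction to that situation.

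First, the induction on $t_0+t_1+\cdots+t_s$ does not work as you describe. You correctly note that the isotypical components $\fA_5^{t_0}$ and $\fS_{n_i}^{t_i}$ are characteristic in $N(\overline{k})$, but then you slide from ``single block'' meaning such a power to meaning a single copy of $\fS_n$ or $\fA_5$. A single copy inside $\fS_n^{t}$ is \emph{not} characteristic (the full automorphism group $\Aut(\fS_n)^t\rtimes\fS_t$ permutes the factors, and the $\Gamma_k$-action on a $k$-form $N$ may well do so), so it need not descend to a normal $k$-subgroup $N_0\lhd E$. Peeling off isotypical powers does reduce you to $N(\overline{k})\simeq\fS_n^t$ or $\fA_5^t$, but not to $t=1$, and the power case is exactly where the real work lies. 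The paper handles it by a structure theorem (Theorem~\ref{twistprod}, proved via the non-abelian Shapiro map): any $k$-form of $N^t$ for $N$ indecomposable and center-free is a Weil restriction $\Res_{A/k}(\widetilde{N_A})$ along a degree-$t$ étale algebra $A/k$, and retract/stable rationality then descends through Weil restriction (Corollary~\ref{descent}). Your proposal has no substitute for this.

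Second, in the single-block $\fA_5$ case you assert ``at most two $k$-forms of $\SchSL_n/\fA_5$, both retract rational thanks to the explicit generic polynomial.'' The forms of a constant $\fA_5$ over a field $M$ are classified by the image of $\coh^1(M,\fS_5)$ in $\coh^1(M,\out(\fA_5))=\coh^1(M,C_2)$, which has as many classes as there are square classes in $M^\times$, not just two. More to the point, the retract rationality of the \emph{twisted} quotients is not immediate from Maeda's theorem on the constant $\fA_5$: the paper must carry out a twisted version of Buhler's argument (Proposition~\ref{easytwistedAlternate} and Lemma~\ref{twistsbgrp}) to produce strongly versal torsors with rational base for each form. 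For $\fS_n$ the situation is cleaner because $\fS_n$ is complete (trivial outer automorphisms), so Lemma~\ref{outnnlem} lets one replace any form by the constant group after choosing the right rational base point --- but again one has to actually make this argument rather than just say ``the outer action is trivial.'' In short, the fibration shell is right, but both the reduction from powers to a single factor and the treatment of twisted $\fA_5$-forms are missing, and these are precisely where the paper spends its effort (Sections~\ref{secpg}--\ref{A5}).
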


In a parallel work, Harpaz and Wittenberg \cite[(1) of Corollary 4.13]{HW2}  prove independently -- with a different method -- the theorem  in case $Q$ is supersolvable and $N(\oline k) \simeq \fA_5$ or $\fS_n$ for $n\in \mathbb N\setminus\{2,6\}$. For supersolvable $Q$, their theorem \cite[Theorem 4.5]{HW2} actually allows $N$ to be any group that satisfies the following condition:
\begin{equation}
  \tag{$\bullet$}\label{conditionHW}
  \parbox{\dimexpr\linewidth-4em}{%
    \strut
    For every homogeneous space $X$ of $\SchSL_{n,k}$ and  $\overline{x}\in X(\overline{k})$ whose stabiliser~$H$ over~$\overline{x}$ satisfies $H(\overline{k})\simeq N(\overline{k})$, the Brauer-Manin obstruction to weak approximation is the only one on $X$.
    \strut
  }
\end{equation}
For the proof of Theorem \ref{bmgroups}, we do not require $Q$ to be supersolvable but on the other hand assume $Q$ verifies (BM) and reduce to the case where \eqref{equ:exact} splits. In such case, we show that $N$ can be chosen to be any group which satisfies (\ref{conditionHW}) with the additional condition on the $X$'s to have a rational point. For the purpose of proving this, we supply in Theorem~\ref{semidirectBM} a dévissage method for (BM) which extends 
Harari \cite[Théorème 1]{Harari} and whose proof relies on his fibration method \cite[Théorème~3]{Harari}. In particular, this approach allows showing that~(BM) holds for iterative semidirect products of groups verifying (BM) by abelian groups or by products of copies of~$\fA_5$,~$\fS_n$, $n\neq 2,6$.


To allow powers of $\fS_n$ and $\fA_5$ in the kernel,
we first consider more generally forms $G$ of powers of constant $k$-groups $N$ which are center-free and indecomposable, that is, which cannot be written as a product of two nontrivial groups. We show that such groups $G$ are Weil restrictions along a finite \'{e}tale algebra $A/k$ of forms of the extension of $N$ to $A$, see Theorem~\ref{twistprod}. This yields a description of homogeneous spaces of $\SchSL_r$ whose geometric stabilisers are powers of complete indecomposable center-free groups, see Theorem \ref{completethm}.

We then deduce that homogeneous spaces of $\SchSL_r$ with geometric stabiliser $\fS_n^t$,~
$n\neq 2,6$, are stably rational, see Corollary \ref{corsymm}.  
Similarly, we show that homogeneous spaces of~$\SchSL_r$ with geometric stabiliser $\fA_5^t$ are retract rational, see  Corollary \ref{corAlt}, extending  Maeda's theorem \cite{MR1018955} on the stable rationality of the field of invariants of~$\fA_5$.  To carry this out, we use a "twisted" version of an argument of Buhler  \cite[\S2.3, pp.46-47]{JLY} to produce a strongly versal $G$-torsor with a rational base for twisted forms $G$ of~$\fA_5^t$, $t\geq1$.

\subsubsection*{Small groups} We apply Theorem \ref{bmgroups} and previous known results in the literature to give an explicit list of the non-solvable groups with cardinality at most $500$ for which (BM) is known, see Proposition \ref{BMnonsoluble}. 
The list of those "small" non-solvable groups for which (BM) is unknown is given in Table \ref{noBMnonsoluble}. The smallest non-solvable groups for which~(BM) is unknown are central extensions of $\fA_5, \Sym_5$, and $\PSL_2(7)$ of orders $240$ and $336$. The smallest simple group for which (BM) is unknown is $\fA_6$.   
We also list 
the "small" (solvable) groups of cardinality at most $191$ for which (BM) is unknown, see Proposition~\ref{rsmallgroups}. 
Furthermore, under Schinzel's hypothesis (H)  \cite{MR0106202}, we show that all but one, a semidirect product of~
$\mathbf{Z}/4\mathbf{Z}$ acting on the Heizenberg group $\He_3$, verify (BM).
We suspect that a similar reasoning could show that $\He_3\rtimes \mathbf{Z}/4\mathbf{Z}$ verifies (BM)   conditionally on Schinzel's hypothesis. 

\subsubsection*{(BM) under Schinzel}
To get (BM) under the Schinzel hypothesis, we prove the following theorem. Recall that a finite $k$-group $Q$ is {\it supersolvable} if there exists a sequence
$\{1\}=Q_0\subseteq Q_1\subseteq\dots\subseteq Q_n = Q$
of normal $k$-subgroups of $Q$ such that~$Q_i/Q_{i-1}$ is cyclic for all $i$. 
\begin{thm}\label{bmschinzel}
Let $k$ be a number field. Consider a short exact sequence of finite algebraic $k$-groups
\begin{center}
\begin{tikzcd}[ampersand replacement=\&]
1 \arrow[r] \& N \arrow[r] \& E\arrow[r] \& Q\arrow[r] \& 1
\end{tikzcd}
\end{center}
such that the derived subgroup of $N(\oline k)$ is $\mathbf{Z}/2\mathbf{Z}$. Assume that either $Q$ is supersolvable, or the sequence splits and $Q$ verifies (BM). If Schinzel's hypothesis (H) holds, then $E$ verifies~(BM).
\end{thm}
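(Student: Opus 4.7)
The plan is to reduce the theorem, via the dévissages for semidirect products and supersolvable extensions, to showing that $N$ satisfies condition~(\ref{conditionHW}) under Schinzel's hypothesis~(H), and then to verify the latter by exploiting the central $\mathbf{Z}/2\mathbf{Z}$-subgroup of $N$ together with Borovoi's theorem and a Schinzel-conditional descent.

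First I would handle the reduction. In the split case, I would apply Theorem~\ref{semidirectBM} to the semidirect product $E = N \rtimes Q$ with $Q$ verifying~(BM); this reduces the problem to condition~(\ref{conditionHW}) for $N$, restricted to homogeneous spaces with a $k$-rational point. In the supersolvable case, I would invoke the theorem of Harpaz--Wittenberg \cite[Theorem~4.5]{HW2} applied to the extension $1 \to N \to E \to Q \to 1$ with $Q$ supersolvable, which again reduces to~(\ref{conditionHW}) for $N$ (this time in full, without the rational-point simplification).

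Next I would analyse the structure of $N$. Since $\Aut(\mathbf{Z}/2\mathbf{Z}) = 1$, every $k$-form $H$ of $N$ has derived $k$-subgroup $H_0 := [H,H]$ equal to the constant group $\mathbf{Z}/2\mathbf{Z}$; this $H_0$ is necessarily central in $H$, and the quotient $A := H/H_0$ is an abelian $k$-group. For any homogeneous space $X$ of $\SchSL_{m,k}$ with geometric stabiliser $H$, the inclusion $H_0 \subseteq H$ yields an intermediate homogeneous space $\tilde X$ with geometric stabiliser $H_0 = \mathbf{Z}/2\mathbf{Z}$, fitting into an $A$-torsor $\pi\colon \tilde X \to X$.

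To finish, I would apply Borovoi's theorem \cite{MR1390687} to conclude that $\tilde X$ and all its $\coh^1(k, A)$-twists verify~(BM), as their geometric stabiliser is the constant abelian group $\mathbf{Z}/2\mathbf{Z}$. Harari's fibration method \cite[Théorème~3]{Harari}, conditional on~(H), then transfers~(BM) across the $A$-torsor $\pi$ to yield~(BM) for $X$, completing the verification of~(\ref{conditionHW}) for $N$ and, by the reduction step, of~(BM) for $E$. The main obstacle I anticipate is carrying out this Schinzel-conditional descent uniformly over all $k$-forms $H$ of $N$, matching the Brauer class coming from the central $\mathbf{Z}/2\mathbf{Z}$-extension with the Brauer-Manin obstruction on $X$, and producing, via~(H), local inputs realising the required Brauer class while controlling conditions at the places of $k$ of bad reduction.
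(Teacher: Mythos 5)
Your reduction step is correct and mirrors the paper exactly: in the split case apply Theorem~\ref{semidirectBM}, in the supersolvable case apply \cite[Theorem~4.5]{HW2}, both of which reduce the problem to showing (BM) for every homogeneous space $X$ of $\SchSL_{n,k}$ whose geometric stabiliser $H$ has $H' = C_2$. Your structural observation is also right: since $\Aut(\mathbf{Z}/2\mathbf{Z})$ is trivial, $H' = C_2$ is a constant central $k$-subgroup and $A := H/H'$ is an abelian $k$-group, so there is a natural intermediate space $\tilde X := \SchSL_n/H'$ with $\tilde X \to X$ an $A$-torsor. This is close in spirit to the universal torsor that the paper constructs, whose structure group has $\overline k$-points $H^{\ab}(\overline k)$.

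The gap is in the descent mechanism. You propose to conclude (BM) for $X$ from (BM) for all $\coh^1(k,A)$-twists of $\tilde X$ by invoking ``Harari's fibration method \cite[Th\'eor\`eme~3]{Harari}, conditional on~(H).'' This fails on two counts. First, the situation is one of descent, not fibration: $\tilde X \to X$ is a torsor with base $X$, and there is no general open-descent statement for torsors under a \emph{finite} abelian group. The available tool, \cite[Corollaire~2.2]{HW1}, requires torsors under \emph{tori}; the paper therefore embeds the dual group $D$ (with $D(\overline k) = H^{\ab}(\overline k)$) into a short exact sequence $1 \to D \to T \to Q \to 1$ with $T$ a torus and $Q$ quasi-trivial, via \cite[Lemma~3.8]{HW2}, and pushes the universal torsor forward to a $T$-torsor ${}_TY$ before descending. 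Second, Harari's Th\'eor\`eme~3 is \emph{unconditional}; it is not where Schinzel's hypothesis~(H) enters. The hypothesis is used through Wittenberg's Schinzel-conditional fibration theorem \cite[Chapitre~3, Corollaire~3.5]{MR2307807}, applied to the fibration of a twist of ${}_TY$ over a projective compactification of the quasi-trivial torus $Q$. Verifying its hypotheses is nontrivial and is carried out in Lemmas~\ref{WAC2} and~\ref{SplitC2}: the smooth fibres over rational points (which are universal torsors, hence by \cite[Corollaire~5.4]{HW1} homogeneous spaces with geometric stabiliser $C_2$) satisfy full weak approximation (stronger than (BM), obtained via Brauer--Hasse--Noether and stable rationality), and the fibres over codimension-one points are split after a quadratic extension. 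Your appeal to Borovoi for (BM) on $\tilde X$ is correct but insufficient, both because weak approximation is what the fibration theorem actually requires and because it is applied to fibres of a different map than the one you describe. So while the high-level structure of your argument is aligned with the paper, the core step of Theorem~\ref{metabelian} needs the torus descent and the Wittenberg fibration, neither of which your proposal supplies.
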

Note that the class of supersolvable groups contains that of nilpotent groups. Further note that supersolvability differs from solvability by requiring that $Q_{i-1}$ is normal in $Q$ and not only  in $Q_i$. As an example, the alternating group $\mathfrak{A}_4$ is solvable, but not supersolvable.

To prove the theorem, we show  (BM) holds under the Schinzel's hypothesis (H) for homogeneous spaces of $\SchSL_r$ whose geometric stabiliser has derived subgroup $\mathbf{Z}/2\mathbf{Z}$, see Theorem \ref{metabelian}. For this, we combine the fibration method \cite[Chapitre 3, Corollaire~3.5]{MR2307807} that relies on Schinzel's hypothesis~(H), and a descent method for torsors under tori \cite[Corollaire 2.2]{HW1}. 
We then make use of Theorem \ref{semidirectBM} and the aforementioned theorem of Harpaz and Wittenberg \cite[Theorem~4.5]{HW2} to deduce Theorem \ref{bmschinzel}  in Section \ref{proofbmschinzel}.

\subsection*{Acknowledgement}
We thank Olivier Wittenberg for helpful discussions, noting the relation to Schinzel's hypothesis and shedding light on the proof we give in Appendix \ref{app:dem}. The first author was supported by a «Contrat doctoral sp\'ecifique normalien» from the \'Ecole normale sup\'erieure de Paris. The second author was supported by the Israel Science Foundation, grant no.~353/21. 

\section{Preliminaries}
\subsection{Notation}\label{notations}
Throughout the paper, if $k$ is a field, $\overline{k}$ denotes a fixed separable closure of $k$, and  $\Gamma_k$ the absolute Galois group $\Gal(\overline{k}/k)$. A variety over $k$ is a separated $k$-scheme of finite type.
If $G$ is an algebraic group over a perfect field $k$, a homogeneous space of $G$ is a $k$-variety~$X$ endowed with a left action of $G$ such that the action of $G(\overline{k})$ on $X(\overline{k})$ is transitive.

For a connected scheme $S$ and group $S$-schemes $G$ and $H$, we say that $H$ is an \textit{$S$-form} of~$G$ if there exists an \'{e}tale cover $T\rightarrow S$ such that the {\it base changes} $G_T$ and $H_T$ are $T$-isomorphic group schemes. When $S=\Spec(k)$, we say $H$ is a $k$-form of $G$. Recall that $S$-forms of $G$ are classified by the pointed set $\coh^1_{\et}(S,\underline{\Aut}(G))$ \cite[Chapter~III, \S4, p.134]{Milne} where~$\underline{\Aut}(G)$ is the sheaf of automorphisms of $G$.

When $S'\rightarrow S$ is a finite locally free morphism of schemes and $X'$ is a quasi-projective $S'$-scheme, we denote by $\Res_{S'/S}(X')$ the Weil restriction of $X'$ along $S'\rightarrow S$, which is again an $S$-scheme 
\cite[\S7.6, Theorem 4]{MR1045822}. 

\subsection{Torsors}\label{sec:tors}

Let $k$ be a field, $G$ an affine algebraic $k$-group and $f:Y\rightarrow X$ a morphism of $k$-varieties. Assume  a left (resp. right) action of $G$ on $Y$ is given, and let $G$ act trivially on $X$. We say that $f$ is a left (resp. right) torsor over $X$ when $f$ is a $G$-equivariant étale map and the morphism $G\times_XY\rightarrow Y\times_XY$ mapping $(g,y)$ to $(g.y,y)$ (resp. to $(y.g,y)$) is an isomorphism. When the context is clear, we omit the left or right nature of the torsor.

Furthermore, when $A$ is a $k$-algebra and $G$ a group $\Spec(A)$-scheme, we shall denote by~$\coh^1(A,G)$ the pointed set $\coh^1_{\et}(\Spec(A),G)$ defined in \cite[\S2.2.1]{CTS}. When $A=k$, this corresponds to the nonabelian Galois cohomology defined in \cite[Chapitre I, \S5]{Ser} and it classifies both right and left $G$-torsors over $k$. In this case, we let $\coc^1(k,G)$ denote the pointed set of $1$-cocycles, that is, continous maps $(a_{\gamma})_{\gamma\in\Gamma_k}:\Gamma_k\rightarrow E$ verifying $a_{\gamma\zeta}=a_{\gamma}\gamma(a_{\zeta})$ for any $\gamma,\zeta\in\Gamma_k$,  see \cite[Chapitre I, \S5.1]{MR0150130}. 


When $G$ is an affine algebraic group over a field $k$, $\sigma\in \coc^1(k,G)$ is a right $k$-torsor under~$G$, and~$Y$ a quasi-projective variety endowed with a left $G$-action, we denote by ${}_{\sigma}Y$ the quotient of~$\sigma\times_kY$ by the action of $G$ defined as~$g.(s,y)=(sg^{-1},gy)$ (the existence of such a variety is for instance ensured by \cite[Lemma~2.2.3]{Sko}). The quotient ${}_{\sigma}Y$ is also known as the \textit{contracted product} of $\sigma$ and $Y$. In particular, for a $k$-group $G$ and a right $k$-torsor $\sigma\in \coc^1(k,G)$ one gets the twisted group ${}_\sigma G$ when $G$ is acting on itself on the left by $g.h=ghg^{-1}$. Then, the twisted group ${}_\sigma G$ acts on ${}_\sigma Y$ in the following way: if $(s,g)$ is the class of $\alpha\in{}_\sigma G(\overline{k})$ and $(s',y)$ the class of $\beta\in{}_\sigma Y(\overline{k})$, then after choosing~$h\in G(\overline{k})$ such that $s'=s.h$, one may set $\alpha.\beta$ as the class of $(s,(gh).y)$ (also see \cite[p.20, Example~2]{Sko}).

Let $f:Y\rightarrow X$ be a left $G$-torsor, for a linear algebraic $k$-group $G$. It is said to be \textit{weakly versal} if, for every field extension $M/k$ and every left $G$-torsor $t:T\rightarrow\Spec(M)$, there exists an $M$-point $a$ of $X$ such that $t$ is the base change of $f$ by $a$. If, moreover, $a$ may be chosen in any nonempty Zariski open subset of $X$, then $f$ is said to be \textit{versal}. Following the terminology of~\cite[~\S1]{first2023highly} and the definitions in \cite[\S1]{MR3344763}, we say that $f$ is \textit{strongly versal} if there exists a finite dimensional $k$-vector space $V$ on which $G$ acts on the left faithfully and a $G$-equivariant dominant rational map $V\dashrightarrow Y$. According to \cite[Theorem $1.1$]{MR3344763}, we have the following implications:
\begin{center}
strongly versal $\Rightarrow$ versal $\Rightarrow$ weakly versal.
\end{center}

When $\underline{s}$ is a finite tuple of indeterminates, we say that a polynomial $f(\underline{s},x)\in k(\underline{s})[x]$ is a \textit{generic polynomial} over $k$ for a finite group $G$ if for every Galois extension $E/M$ with group $G$ and an overfield $M\supseteq k$, there exists $\underline{a}\in M^{|\underline{s}|}$ such that $f(\underline{a},x)$ is well defined and~$E$ is a splitting field of $f(\underline{a},x)$ over $M$. By \cite[Theorem 1]{MR1966633}, the existence of a generic polynomial for $G$ is equivalent to the existence of a weakly versal $G$-torsor whose base is a nonempty open subset of an affine space.

\subsection{Local-global principles and rationality}
\label{sec:rational-BM}
Over number fields $k$, we consider weak~approximation and the Brauer--Manin obstruction on a smooth $k$-variety $X$. 
Recall that~$X$~verifies \textit{weak approximation} off a finite subset $S\subseteq\Omega_k$ whenever $X(k)$ is dense in $\prod_{v\in\Omega_k\backslash S}X(k_v)$, the latter being endowed with the product topology. 
Let $\Br(X)=H^2_{\acute{\mathrm{e}}\mathrm{t}}(X,\mathbf{G}_m)$ denote the Brauer group of~$X$, and  $\Brun(X)$ the Brauer group of any smooth compactification of $X$ \cite[Proposition~3.7.10]{CTS}. Setting  $X(k_{\Omega})=\prod_{v\in\Omega_k}X(k_v)$, the \textit{Brauer-Manin pairing} has been introduced by Manin \cite{MR0427322} (see also \cite[\S1.3]{Harari}) and is given by:
\begin{center}
\begin{tikzcd}[row sep=0.5em]
\Brun(X)\times X(k_\Omega) \arrow[r] & \mathbf{Q}/\mathbf{Z} \\
\left((x_v),\alpha\right) \arrow[r,mapsto] & \sum_{v\in\Omega_K} inv_v\left(x_v^*(\alpha)\right),
\end{tikzcd}
\end{center}
where $inv_v:\Br(k_v)\rightarrow\mathbf{Q}/\mathbf{Z}$ is the local invariant and $x_v^*$ stands for the specialization morphism of Brauer groups $\Br(x_v):\Br(X)\rightarrow\Br(k_v)$. Elements of $X(k_{\Omega})$ orthogonal to~$\Brun(X)$ form a closed subset of $X(k_{\Omega})$ which contains $X(k)$ and which is called the Brauer-Manin set of~$X$. We shall say that {\it the Brauer--Manin obstruction to weak approximation is the only one on~$X$} if~$X(k)$ is dense in $X(k_{\Omega})^{\Brun(X)}$. In this situation, we will write that~$X$ verifies (BM).

Furthermore, over an arbitrary field $k$, we describe the properties of varieties considered throughout the paper. Two $k$-varieties~$X$ and $Y$ are said to be \textit{stably $k$-birational}, or \textit{stably $k$-birationally equivalent}, if there exist positive integers $m$ and $n$ such that $X\times\mathbf{P}^m_k$ and~$Y\times\mathbf{P}^n_k$ are $k$-birational. The variety~$X$ is said to be \textit{stably $k$-rational} if it is stably $k$-birational to~$\mathbf{P}^n_k$ for some positive integer $n$. We also say that the variety~$X$ is \textit{retract $k$-rational} if there exists a rational map $\mathbf{P}^n_k\dashrightarrow  X$ with a rational section, for some positive integer $n$. When there is no confusion, the underlying field will be omitted in the preceding terminologies. Weak approximation and (BM) are stably $k$-birational invariants.

As a ubiquitous statement in this article, we recall the statement of the "no-name" lemma~\cite[Corollary~3.9]{MR2348904}:
\begin{lem}["No-name" lemma]\label{noname}
Let $k$ be a field and $G$ a finite $k$-group. For  positive integers $r,s$  any embeddings $G\xhookrightarrow{}\SchSL_{r,k}$ and~$G\xhookrightarrow{}\SchSL_{s,k}$, the quotient varieties $\SchSL_{r,k}/G$ and~$\SchSL_{s,k}/G$ are stably $k$-birational.
\end{lem}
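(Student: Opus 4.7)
The plan is to introduce a common intermediate quotient and invoke Hilbert 90. Denoting the given embeddings by $\iota_r\colon G\hookrightarrow \SchSL_{r,k}$ and $\iota_s\colon G\hookrightarrow \SchSL_{s,k}$, I would consider the product $Y:=\SchSL_{r,k}\times_k \SchSL_{s,k}$ equipped with the left diagonal $G$-action $g\cdot(A,B)=(\iota_r(g)A,\iota_s(g)B)$. Since left multiplication by $G$ on $\SchSL_{r,k}$ is free, so is this diagonal action, and the geometric quotient $Y/G$ exists as a smooth $k$-variety. It suffices to prove that $Y/G$ is stably $k$-birational to $\SchSL_{r,k}/G$: applying the same argument with the roles of $r$ and $s$ swapped will then give the lemma by transitivity of stable birational equivalence.

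Next, I would analyse the first projection $\pi\colon Y/G\to \SchSL_{r,k}/G$. The right multiplication action of $\SchSL_{s,k}$ on the second factor of $Y$ commutes with the $G$-action and hence descends to a free action on $Y/G$ over $\SchSL_{r,k}/G$, exhibiting $\pi$ as an $\SchSL_{s,k}$-torsor. Its generic fibre over $\eta=\Spec k(\SchSL_{r,k}/G)$ is the contracted product, via $\iota_s$, of the $G$-torsor $\Spec k(\SchSL_{r,k})\to \eta$ with $\SchSL_{s,k}$, which is naturally an $\SchSL_{s,k}$-torsor over $k(\eta)$. By Hilbert 90, $\coh^1(k(\eta),\SchSL_{s,k})=0$, so this generic torsor is trivial. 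Spreading out, $\pi$ is trivial over a non-empty open $U\subseteq \SchSL_{r,k}/G$, and consequently $Y/G$ is $k$-birational to $\SchSL_{r,k}/G\times_k \SchSL_{s,k}$.

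To conclude, I would use that $\SchSL_{s,k}$ is $k$-stably rational: the section $t\mapsto \mathrm{diag}(t,1,\dots,1)$ of the determinant yields a $k$-isomorphism of varieties $\SchGL_{s,k}\simeq \SchSL_{s,k}\times_k \mathbb{G}_{m,k}$, and $\SchGL_{s,k}$ is an open subvariety of $\mathbb{A}^{s^2}_k$. Hence $\SchSL_{r,k}/G\times_k \SchSL_{s,k}$ is stably $k$-birational to $\SchSL_{r,k}/G$, so $Y/G$ is stably $k$-birational to $\SchSL_{r,k}/G$, and by symmetry also to $\SchSL_{s,k}/G$. The main subtlety is identifying $\pi$ cleanly as an $\SchSL_{s,k}$-torsor whose generic fibre is classified by $\coh^1(k(\eta),\SchSL_{s,k})$; once this is in place, Hilbert 90 and the stable rationality of $\SchSL_{s,k}$ finish the argument.
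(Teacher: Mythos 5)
Your proof is correct. The paper does not supply a proof of this lemma, instead citing \cite[Corollary~3.9]{MR2348904}; but the argument you give is essentially the standard derivation of this $\SchSL_n$-version of the no-name lemma, and each step holds up: the first projection from $(\SchSL_{r,k}\times_k\SchSL_{s,k})/G$ is indeed a right $\SchSL_{s,k}$-torsor (its pullback along the \'etale $G$-cover $\SchSL_{r,k}\to\SchSL_{r,k}/G$ is the trivial torsor $\SchSL_{r,k}\times_k\SchSL_{s,k}$), the generic fibre is the contracted product you describe and vanishes in $\coh^1(k(\eta),\SchSL_{s,k})$ by Hilbert 90, the trivialization spreads out over a dense open, and $\SchSL_{s,k}\times_k\mathbb{G}_{m,k}\simeq\SchGL_{s,k}$ gives stable rationality of $\SchSL_{s,k}$; by symmetry the conclusion follows.
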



Also, for $k$-groups we consider throughout this paper the following properties:

\begin{nota}\label{varprop}
When $k$ is a field, consider the following properties on affine $k$-groups $G$:
\begin{enumerate}[label=(\alph*)]
\item there exists an embedding $G\xhookrightarrow{}\SchSL_{n,k}$ such that the variety $\SchSL_{n,k}/G$ is stably $k$-rational;
\item there exists a strongly-versal $G$-torsor $Y\rightarrow X$ over $k$, where $X$ is rational;
\item there exists an embedding $G\xhookrightarrow{}\SchSL_{n,k}$ such that the variety $\SchSL_{n,k}/G$ is retract $k$-rational;
\item the field $k$ is a number field and there exists an embedding $G\xhookrightarrow{}\SchSL_{n,k}$ such that the Brauer-Manin obstruction to weak approximation is the only one for the variety~$\SchSL_{n,k}/G$;
\item there exists a finite set of places $S$ of $k$ and an embedding $G\xhookrightarrow{}\SchSL_{n,k}$ such that the variety $\SchSL_{n,k}/G$ verifies weak approximation off $S$.
\end{enumerate}
\end{nota}
One may notice that properties (a), (c), (d) and (e) of Notation \ref{varprop} do not depend on the choice of the embedding, by Lemma \ref{noname} combined to the stably birational invariance of stable rationality, retract rationality, the Brauer-Manin obstruction to weak approximation and weak approximation off a finite set of places \cite[Proof of Proposition 13.2.3]{CTS}.
Furthermore, we have the following classical implications:
\begin{center}
(a) $\Rightarrow$ (b) $\Rightarrow$ (c) $\Rightarrow$ (d) $\Rightarrow$ (e)
\end{center}
where the first implication is given by choosing $Y\rightarrow X$ as being $f\times id_{\mathbf{P}^r_k}$ where $f:\SchSL_{n,k}\rightarrow\SchSL_{n,k}/G$ and $\mathbf{P}^r_k$ is chosen such that $\mathbf{P}^r_k\times\SchSL_{n,k}/G$ is rational. Moreover, the second implication is a consequence of the equivalence of $(1)$ and $(3)$ in \cite[Proposition~4.2]{MR3645070}.

\subsection{Preliminaries on group theory and almost complete stabilisers}\label{secacs}
Let $C_n$ denote the cyclic group of order $n$, and $\fS_n$ (resp. $\fA_n$) the symmetric (resp. alternating) group of degree $n$.

Let us recall that a finite group $G$ is \textit{complete} if it is center-free with no nontrivial outer automorphism. Examples of complete groups are the $\fS_n$'s for 
$n\neq2,6$. Furthermore, we will say that a group $G$ is \textit{indecomposable} if it cannot be written as a product of nontrivial groups: this is for example the case of $\fS_n$ and $\fA_n$ when $n\geq1$, since their normal subgroups have no direct factor.

Before going any further, let us state the following ubiquitous lemma on automorphisms of powers of indecomposable groups.
\begin{lem}[{\cite[Theorem 3.1]{Bidwell}}]\label{bidw}
    If $N$ is a group and $t$ a positive integer, then the following morphism is injective:
    \begin{center}
        \begin{tikzcd}[row sep=0.2, column sep=0.5cm]
            \iota:\Aut(N)^t\rtimes\fS_t\arrow[r] & \Aut(N^t)\\
            ((\varphi_i)_{i=1}^t,\sigma) \arrow[r,mapsto] & \left[(n_1,\dots,n_t)\mapsto \prod_{i=1}^t\varphi_i(n_{\sigma^{-1}(i)})\right],
        \end{tikzcd}
    \end{center}
    where $\fS_t$ acts on $\Aut(N)^t$ by $\sigma.(\varphi_i)_{i=1}^t=(\varphi_{\sigma^{-1}(i)})_{i=1}^t$ for any $\sigma\in\fS_t$ and $(\varphi_i)_{i=1}^t\in\Aut(N)^t$. Moreover, if $N$ is indecomposable and center-free, then $\iota$ is an isomorphism.
\end{lem}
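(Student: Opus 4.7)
The plan is to establish injectivity by a direct computation on ``standard basis'' tuples, and then to prove surjectivity by showing that every $\psi \in \Aut(N^t)$ permutes the canonical direct factors $N_i := 1 \times \cdots \times N \times \cdots \times 1$, with $N$ placed in the $i$-th slot.

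For injectivity, I would suppose $\iota((\varphi_i)_{i=1}^t, \sigma) = \mathrm{id}_{N^t}$ and evaluate on the tuples $e_j(n) := (1,\dots, 1, n, 1, \dots, 1)$ with $n \in N$ in the $j$-th slot. By the defining formula for $\iota$, the $i$-th coordinate of the image is $\varphi_i(n)$ when $i = \sigma(j)$ and $1$ otherwise. Matching with $e_j(n)$ for every $n \in N$ and every $j \in \{1, \dots, t\}$ forces $\sigma = \mathrm{id}$ and $\varphi_j = \mathrm{id}_N$ for each $j$.

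For surjectivity, assume $N$ is nontrivial, indecomposable and center-free, fix $\psi \in \Aut(N^t)$, and set $M_i := \psi(N_i)$. Applying $\psi$ to the internal direct product $N^t = N_1 \times \cdots \times N_t$ yields the internal direct product $N^t = M_1 \times \cdots \times M_t$; in particular each $M_i$ is normal and $[M_i, M_{i'}] = 1$ for $i \ne i'$. Let $p_j : N^t \to N_j$ denote the $j$-th projection and set $A_i^{(j)} := p_j(M_i)$, a normal subgroup of $N_j$ (being the image of a normal subgroup under a surjective homomorphism); distinct $A_i^{(j)}$'s pairwise commute, and their product equals $p_j(N^t) = N_j$. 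Combining the elementary fact that commuting normal subgroups $A, B$ of a group $G$ satisfy $A \cap B \subseteq Z(AB)$ with $Z(N_j) = Z(N) = 1$ upgrades the commuting product $N_j = \prod_i A_i^{(j)}$ to an internal direct product. Since $N_j \cong N$ is indecomposable, exactly one of the $A_i^{(j)}$'s equals $N_j$ while the others are trivial, which defines a map $\sigma^{-1} : \{1,\dots,t\} \to \{1,\dots,t\}$. If $\sigma^{-1}$ missed some index $i$, then $p_j(M_i) = 1$ for all $j$, forcing $M_i = 1$ and contradicting $M_i \cong N$; hence $\sigma \in \fS_t$, and a short unwinding then gives $M_i = N_{\sigma(i)}$ for every $i$.

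With $\sigma$ in hand, define $\varphi_{\sigma(i)} \in \Aut(N)$ as the automorphism induced by the restriction $\psi|_{N_i} : N_i \xrightarrow{\sim} N_{\sigma(i)}$ via the canonical identifications $N \cong N_i$ and $N \cong N_{\sigma(i)}$. A direct check against the formula defining $\iota$ then gives $\iota((\varphi_i)_{i=1}^t, \sigma) = \psi$. I expect the rigidity step in the preceding paragraph to be the main obstacle: both hypotheses on $N$ are essential, as center-freeness is what upgrades the commuting product of the $A_i^{(j)}$'s into an internal direct product, while indecomposability is what collapses that direct product to a single factor.
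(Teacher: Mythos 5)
The paper does not supply its own proof of this lemma: it is cited directly as \cite[Theorem~3.1]{Bidwell}, so there is no in-paper argument to compare against.

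Your proof is correct, and it is the standard Remak--Krull--Schmidt-style argument one would expect here: show injectivity by evaluating on the tuples $e_j(n)$ supported in a single slot, and show surjectivity by proving that an automorphism permutes the canonical indecomposable factors $N_i$, using that commuting normal subgroups of a center-free group meet trivially and that an indecomposable factor cannot split further. The key reduction---projecting $M_i=\psi(N_i)$ onto each $N_j$ and showing that the resulting commuting normal subgroups $A_i^{(j)}$ form an internal direct product of $N_j$---is correctly justified: normality passes through the surjective projections $p_j$, pairwise commutation passes through as well, and the fact that $A\cap B\subseteq Z(AB)$ for commuting normal $A,B$ combined with $Z(N)=1$ gives the direct product, after which indecomposability collapses it to a single factor. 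The surjectivity of $j\mapsto\sigma^{-1}(j)$ is correctly deduced from $M_i\neq 1$, and the final identity $\iota((\varphi_i),\sigma)=\psi$ does indeed follow by a coordinate-by-coordinate computation using $\psi(e_i(n))\in N_{\sigma(i)}$.

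One small caveat worth being aware of: the injectivity claim as stated (``If $N$ is a group\dots'') is technically false when $N$ is the trivial group and $t\geq 2$, since then $\Aut(N^t)$ is trivial while $\Aut(N)^t\rtimes\fS_t\cong\fS_t$ is not. Your argument silently assumes there is some $n\neq 1$ in $N$, which is exactly the hypothesis needed; this is a harmless looseness in the statement (and in the ``moreover'' clause it is automatic, since an indecomposable group is by convention nontrivial), but it is worth keeping in mind.
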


Using the terminology of \cite[\S4.1.4]{HW2}, we shall say that a finite group $G$ is \textit{almost complete} if it is center-free and if the morphism $\Aut(G)\rightarrow\out(G)$ has a section. As these groups being ubiquitous in this article, we summarise  their basic properties in the following lemma. Complete groups are examples of almost complete groups. 

\begin{lem}\label{almostcomplete}
\begin{enumerate}[label=(\roman*)]
\item A finite group $G$ is almost complete if and only if any short exact sequence of profinite groups
\begin{center}
\begin{tikzcd}
1\arrow[r] & G \arrow[r] & E \arrow[r] & Q \arrow[r] & 1
\end{tikzcd}
\end{center}
splits as a semi-direct product of profinite groups $E\simeq G\rtimes Q$.
\item If $G$ is a finite almost complete indecomposable group, then for any positive integer~$t$, the group $G^t$ is almost complete.
\item If $H_1,\dots,H_r$ are finite almost complete groups where no pair of the $H_i$ have a common direct factor, then~$\prod_{i=1}^rH_i$ is almost complete.
\end{enumerate}
\end{lem}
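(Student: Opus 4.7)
I would start with (i), splitting into the two implications and treating both directions via the conjugation action. Assuming first that $G$ is almost complete with a section $s\colon\out(G)\to\Aut(G)$, and given any extension $1\to G\to E\to Q\to 1$ of profinite groups, conjugation yields a continuous morphism $\phi\colon E\to\Aut(G)$ whose restriction to $G$ realises the identification $G\xrightarrow{\sim}\operatorname{Inn}(G)$ (using $Z(G)=1$), and induces $\bar\phi\colon Q\to\out(G)$ on the quotient. I would then define $\sigma\colon Q\to E$ by sending $q$ to the unique preimage of $q$ in $E$ whose $\phi$-image equals $s(\bar\phi(q))$; existence and uniqueness follow because $\phi(e)$ and $s(\bar\phi(q))$ lie over the same class in $\out(G)$ and differ by a unique inner automorphism (the centerlessness of $G$ turning ``inner automorphism'' into an honest element of $G$). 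The homomorphicity of $\sigma$ follows from that of $s$ together with this uniqueness, and continuity from identifying $\sigma(Q)$ with the closed subgroup $\phi^{-1}(s(\out(G)))$ of $E$, which maps bijectively -- hence homeomorphically -- onto $Q$. Conversely, the splitting property applied to $1\to\operatorname{Inn}(G)\to\Aut(G)\to\out(G)\to 1$ yields a section, once one knows $G\simeq\operatorname{Inn}(G)$, i.e.\ $Z(G)=1$; to derive center-freeness I would produce a non-split extension of $G$ whenever $Z(G)\neq 1$, by forming the pushout of $G$ and a non-split central extension $1\to\Z/p\to\Z/p^2\to\Z/p\to 1$ along an embedding $\Z/p\hookrightarrow Z(G)\hookrightarrow G$ for some prime $p$ dividing $|Z(G)|$.

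For (ii), Lemma~\ref{bidw} identifies $\Aut(G^t)=\Aut(G)^t\rtimes\fS_t$ with $\fS_t$ permuting coordinates, and since $G$ is centerless so is $G^t$. Inner automorphisms of $G^t$ act coordinatewise, so $\operatorname{Inn}(G^t)=\operatorname{Inn}(G)^t$ sits inside the $\Aut(G)^t$-factor with trivial $\fS_t$-component; quotienting yields $\out(G^t)=\out(G)^t\rtimes\fS_t$. The section $\tilde s\colon\out(G^t)\to\Aut(G^t)$ sending $((\bar\varphi_i)_i,\tau)\mapsto((s(\bar\varphi_i))_i,\tau)$ is then a homomorphism, compatibility with the semidirect structure following from the $\fS_t$-equivariance of $s$ for the coordinate-permutation action.

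For (iii), I would decompose each $H_i$ into indecomposable centerless factors via Krull--Schmidt. The hypothesis that no two $H_i$ share a common direct factor translates to: no indecomposable appearing in $H_i$ is isomorphic to one in $H_j$ for $j\neq i$. Applying Lemma~\ref{bidw} to the full product then decomposes $\Aut(\prod_i H_i)=\prod_i\Aut(H_i)$, since any symmetric-group permutations of isomorphic indecomposable factors must stay within a single $H_i$. Passing to the quotient gives $\out(\prod_i H_i)=\prod_i\out(H_i)$, and the product of the individual sections $s_i\colon\out(H_i)\to\Aut(H_i)$ supplies the required section.

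The main obstacle, in my view, is the ``only if'' direction of (i), specifically the derivation of center-freeness from the universal splitting property; the naive pushout along a central element of prime order can accidentally split (for example when $G=\Z/p^2$, where the resulting extension is a direct sum $\Z/p\oplus\Z/p^2$, which is split), so some case analysis -- perhaps separating the abelian case via the direct cohomological observation $H^2(\Z/p,G)\simeq G/pG\neq 0$, and using a more refined construction in the non-abelian case -- is required. The remaining parts (ii) and (iii) are essentially bookkeeping once Lemma~\ref{bidw} and Krull--Schmidt are in hand.
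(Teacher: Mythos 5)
Your proof of (ii) is correct and essentially reproduces the paper's argument (identify $\Aut(G^t)=\Aut(G)^t\rtimes\fS_t$ via Lemma~\ref{bidw}, observe the quotient to $\out(G^t)=\out(G)^t\rtimes\fS_t$ is coordinatewise, and take the coordinatewise section).

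For (i) the paper simply cites \cite[Lemma 4.12.(1)]{HW2}, so you are supplying a proof where the paper does not. Your forward direction is correct, including the continuity argument via $\sigma(Q)=\phi^{-1}\bigl(s(\out(G))\bigr)$. The gap you flag in the converse is real, but the repair is more uniform than you suggest and no abelian/non-abelian case split is needed. For any finite $G$ and trivial outer action $\Z/p\to\out(G)$, the equivalence classes of extensions $1\to G\to E\to\Z/p\to 1$ inducing that outer action form a torsor under $H^2(\Z/p,Z(G))$ with base point the split extension (Eilenberg--Mac Lane theory of extensions with nonabelian kernel); if $p\mid |Z(G)|$ then $H^2(\Z/p,Z(G))\cong Z(G)/pZ(G)\neq 0$, so a non-split extension exists. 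This is the same cohomological observation you propose for abelian $G$, applied to $Z(G)$ rather than $G$, and it works uniformly. Your observation that the central pushout $(G\times\Z/p^2)/\Delta$ can split, e.g.\ when $G=\Z/p^2$, is correct and is exactly why that naive construction is not enough.

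In (iii) there is a genuine, unacknowledged gap. Lemma~\ref{bidw} only describes $\Aut(N^t)$ for a power of a \emph{single} indecomposable center-free group; applied "to the full product" it does not yield $\Aut\bigl(\prod_i H_i\bigr)=\prod_i\Aut(H_i)$, because it does not by itself say that an automorphism must preserve each block of isomorphic indecomposable factors. What is needed is precisely \cite[Theorem 2.2]{Bidwell}, which the paper invokes directly, or equivalently the Remak refinement of Krull--Schmidt: for a center-free group the decomposition into indecomposable direct factors is unique as \emph{subgroups}, not merely up to isomorphism, so every automorphism permutes isomorphic indecomposable factors. Ordinary Krull--Schmidt, which gives uniqueness only up to isomorphism, does not suffice, so your reduction reformulates the hypothesis but does not replace the cited theorem.
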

\begin{proof}
Assertion (i) corresponds to \cite[Lemma 4.12.(1)]{HW2}. To prove (ii), first notice that~$G^t$ is center-free. Furthermore, since $G$ is indecomposable, Lemma \ref{bidw} ensures that $\Aut(G^t)\simeq \Aut(G)^t\rtimes\fS_t$ where $\fS_t$ acts on $\Aut(G)^t$ by permuting the coordinates. Denote by $s$ a section of $a:\Aut(G)\rightarrow\out(G)$. Then, the morphism $\Aut(G^t)\rightarrow\out(G^t)$ corresponds to the morphism $\prod_{1\leq i\leq t}a \times id_{\fS_t}:\Aut(G)^t\rtimes\fS_t\rightarrow\out(G)^t\rtimes\fS_t$ and a section is given by $\prod_{1\leq i\leq t}s\times id_{\fS_t}$, which prove (ii).

As for (iii), the group $\prod_{i=1}^rH_i$ is clearly center-free. Then \cite[Theorem 2.2]{Bidwell} ensures that $\Aut(\prod_{i=1}^rH_i)=\prod_{i=1}^r\Aut(H_i)$, so that the morphism $$\Aut\left(\prod_{i=1}^rH_i\right)\rightarrow\out\left(\prod_{i=1}^rH_i\right),$$ which coincides with  $\prod_{i=1}^r(\Aut(H_i)\rightarrow\out(H_i))$, has a section since each of the $H_i$'s is almost complete.
\end{proof}

Combining Lemma \ref{almostcomplete}.(i) with a theorem of P\'{a}l--Schlank one gets:

\begin{prop}\label{acrp}
Let $X$ be a homogeneous space of $\SchSL_{n,k}$ and $\overline{x}\in X(\overline{k})$. Denote by $G$ the stabiliser of $\overline{x}$. If the group $G(\overline{k})$ is finite and almost complete, then $X(k)\neq\emptyset$.
\end{prop}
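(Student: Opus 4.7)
The plan is to combine Lemma~\ref{almostcomplete}.(i) with the theorem of P\'al--Schlank, which furnishes a rational point on a homogeneous space of $\SchSL_n$ once its band is representable by a $k$-form of the geometric stabiliser. Concretely, the whole argument reduces to producing a finite $k$-form $\mathcal{G}$ of $G$ whose associated band coincides with that of $X$.

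First I would make the band of $X$ explicit. For every $\sigma\in\Gamma_k$, choose $g_\sigma\in\SchSL_n(\overline{k})$ satisfying $g_\sigma\cdot\overline{x}=\sigma(\overline{x})$, which exists since $\SchSL_n$ acts transitively on $X(\overline{k})$. Then $h\mapsto g_\sigma^{-1}\sigma(h)g_\sigma$ is an automorphism of $G(\overline{k})$, well defined modulo an inner automorphism, so one obtains a continuous homomorphism $\mu\colon\Gamma_k\to\out(G(\overline{k}))$ representing the band of $X$ (using that $G(\overline{k})$ is center-free, which is part of being almost complete). Pulling $\mu$ back along $\Aut(G(\overline{k}))\to\out(G(\overline{k}))$ then yields a short exact sequence of profinite groups
\begin{center}
\begin{tikzcd}[ampersand replacement=\&]
1 \arrow[r] \& G(\overline{k}) \arrow[r] \& \Gamma_k\times_{\out(G(\overline{k}))}\Aut(G(\overline{k})) \arrow[r] \& \Gamma_k \arrow[r] \& 1.
\end{tikzcd}
\end{center}

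Since $G(\overline{k})$ is almost complete, Lemma~\ref{almostcomplete}.(i) splits this sequence, giving a continuous lift $\widetilde{\mu}\colon\Gamma_k\to\Aut(G(\overline{k}))$ of $\mu$. The datum $\widetilde{\mu}$ is Galois descent data for a finite $k$-form $\mathcal{G}$ of $G$ whose canonical band equals $\mu$. Invoking the theorem of P\'al--Schlank for homogeneous spaces of $\SchSL_n$ with finite geometric stabiliser whose band is represented by a $k$-form, one concludes that $X(k)\neq\emptyset$.

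The only substantive step is the splitting: it is the sole place where the almost-completeness hypothesis enters the argument, and it is exactly what annihilates the outer-action obstruction encoded in the band. I therefore expect the main subtlety to lie in verifying cleanly that the canonical band of the reconstructed $k$-form $\mathcal{G}$ matches that of $X$, which is however essentially bookkeeping once $\widetilde{\mu}$ has been produced.
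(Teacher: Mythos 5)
Your proof is correct and follows essentially the same route as the paper's: both invoke Lemma~\ref{almostcomplete}.(i) to produce a section and then conclude via P\'al--Schlank \cite[Theorem~7.6]{PS}. The only cosmetic difference is that the paper works directly with the fundamental exact sequence $1\to G(\overline{k})\to \pi_1^{\et}(X,\overline{x})\to\Gamma_k\to 1$ and splits it, whereas you reconstruct that sequence as the pullback $\Gamma_k\times_{\out(G(\overline{k}))}\Aut(G(\overline{k}))$ along the band $\mu$ and split the pullback instead; these coincide as extensions precisely because $G(\overline{k})$ is center-free, which you correctly invoke, so the two formulations of P\'al--Schlank (section of $\pi_1^{\et}$ versus band representable by a $k$-form) are interchangeable here.
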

\begin{proof}
By Lemma \ref{almostcomplete}.(i) the exact sequence
$$ 1\ra G(\oline k)\ra \pi^1_{\et}(X,\oline x)\ra \Gamma_k\ra 1,$$
splits. For homogeneous spaces of $\SchSL_n$, the existence of such a section implies the existence of a rational point by \cite[Theorem 7.6]{PS}. 
\end{proof}
Eventually, the following proposition gives a structural statement for homogeneous spaces whose stabilisers are products of almost complete groups with no pair having a common direct factor: 

\begin{prop}\label{prodeh}
Let $H_1,\dots,H_r$ be almost complete finite constant $k$-groups with no pair of the $H_i$'s having a common direct factor, and $m$ a positive integer. Suppose $X$ is an $\SchSL_{m,k}$-homogeneous space whose geometric stabiliser
is the direct product $\prod_{i=1}^rH_i$. Then~$X$ is stably birational to a product $\prod_{i=1}^rX_i$ where:
\begin{enumerate}[label=\alph*)]
\item there exists $n\in\mathbf{N}$ such that each $X_i$ is a $\SchSL_{n,k}$-homogeneous space;
\item for each $i\in\{1,\dots,r\}$, and for any $\overline{x_i}\in X_i(\overline{k})$, if one denotes by $G_i$ the stabiliser of $\overline{x_i}$, then $G_i(\overline{k})\simeq H_i$.
\end{enumerate}
\end{prop}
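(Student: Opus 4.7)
\emph{Strategy.} I will (a) produce a $k$-rational point of $X$ via almost completeness, (b) split the stabiliser using a structure theorem on $\Aut(\prod_i H_i)$, and (c) apply Lemma \ref{noname} together with a fibration argument to obtain the desired stably $k$-birational product decomposition.

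\emph{Step 1: a rational point and decomposition of the stabiliser.} By Lemma \ref{almostcomplete}.(iii), the group $\prod_{i=1}^r H_i$ is almost complete, so Proposition \ref{acrp} furnishes a point $x_0\in X(k)$. Its stabiliser $G\subseteq \SchSL_{m,k}$ is then a $k$-form of the constant group $\prod_i H_i$ and $X\simeq \SchSL_{m,k}/G$ as $\SchSL_{m,k}$-homogeneous spaces. Invoking \cite[Theorem~2.2]{Bidwell}, the no-common-direct-factor hypothesis yields $\Aut(\prod_i H_i)=\prod_i \Aut(H_i)$. Consequently, $k$-forms of $\prod_i H_i$ are classified by $\prod_i \coh^1(k,\Aut(H_i))$ and decompose as products, so $G\simeq \prod_i G_i$ as $k$-groups, with each $G_i$ a $k$-form of $H_i$.

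\emph{Step 2: fibering and equalising ranks.} Fix $k$-embeddings $G_i\hookrightarrow \SchSL_{n_i,k}$ and form the block-diagonal inclusion $G = \prod_i G_i\hookrightarrow \prod_i \SchSL_{n_i,k}\hookrightarrow \SchSL_{N,k}$ with $N:=\sum_i n_i$. Lemma \ref{noname} gives $X$ stably $k$-birational to $\SchSL_{N,k}/G$. The natural surjection
\[
\pi:\SchSL_{N,k}/G\longrightarrow \SchSL_{N,k}/\textstyle\prod_i \SchSL_{n_i,k},
\]
has fiber $(\prod_i \SchSL_{n_i,k})/G = \prod_i \SchSL_{n_i,k}/G_i$ over the origin. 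Because $\SchSL_n$ is a special group, the torsor $\SchSL_{N,k}\to\SchSL_{N,k}/\prod_i \SchSL_{n_i,k}$ is Zariski-locally trivial, and hence so is the associated fibration $\pi$. The base is a partial flag variety of $k^N$, which is $k$-rational, so $\SchSL_{N,k}/G$ is $k$-birational to this base times $\prod_i \SchSL_{n_i,k}/G_i$, and therefore stably $k$-birational to $\prod_i \SchSL_{n_i,k}/G_i$. Setting $n:=\max_i n_i$ and applying Lemma \ref{noname} factor-by-factor (after composing each $G_i\hookrightarrow \SchSL_{n_i,k}$ with $\SchSL_{n_i,k}\hookrightarrow \SchSL_{n,k}$), one replaces each $\SchSL_{n_i,k}/G_i$ by $X_i:=\SchSL_{n,k}/G_i$ up to stable $k$-birational equivalence. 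Since stable $k$-birationality is preserved under products (products of projective spaces are stably rational), we obtain $X$ stably $k$-birational to $\prod_i X_i$. Each $X_i$ is by construction an $\SchSL_{n,k}$-homogeneous space, and its geometric stabiliser at any point is conjugate to $(G_i)_{\oline k}\simeq H_i$.

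\emph{Main obstacle.} The essential conceptual step is Step~1: descending the decomposition $G\simeq \prod_i G_i$ from $\oline k$ to $k$, which rests on Bidwell's theorem for $\Aut(\prod_i H_i)$ combined with the no-common-direct-factor assumption. Once this is secured, the fibration argument of Step~2 is routine, using only the speciality of $\SchSL_n$ and the rationality of the partial flag variety base.
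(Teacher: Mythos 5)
Your argument matches the paper's proof in its essential structure: Step~1 (producing a rational point via Proposition~\ref{acrp} and descending the product decomposition of the stabiliser via Bidwell's theorem) is exactly the argument the paper gives. In Step~2 you fill in, with an explicit fibration argument, what the paper obtains by a direct appeal to Lemma~\ref{noname}; this is a reasonable expansion, since Lemma~\ref{noname} as stated only compares $\SchSL_{r,k}/G$ with $\SchSL_{s,k}/G$ for the same $G$ and does not by itself deliver the product decomposition $\SchSL_{m,k}/\prod_i G_i \sim \prod_i \SchSL_{n,k}/G_i$ — that step requires either the speciality/fibration argument you give or the representation-theoretic version of the no-name lemma applied to a faithful representation $\bigoplus_i V_i$ of $\prod_i G_i$. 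One small inaccuracy worth noting: $\SchSL_{N,k}/\prod_i\SchSL_{n_i,k}$ is \emph{not} a partial flag variety, since the block-diagonal subgroup $\prod_i\SchSL_{n_i}$ is not parabolic (it is contained in a proper Levi). Nonetheless the base is stably $k$-rational for the same speciality reason you invoke: the Zariski-local triviality of $\SchSL_{N}\to\SchSL_{N}/\prod_i\SchSL_{n_i}$ shows $\SchSL_{N}$ is birational to this base times $\prod_i\SchSL_{n_i}$, and since both $\SchSL_{N}$ and $\prod_i\SchSL_{n_i}$ are rational the base is stably rational, which is all your argument needs.
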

\begin{proof}
By Proposition \ref{acrp}, one has  $X(k)\neq\emptyset$. The choice of a rational point of~$X$ supplies a $\overline{k}/k$-form $G$ of the constant group $\prod_{i=1}^r H_i$ and an isomorphism of $\SchSL_{m,k}$-homogeneous spaces $X\simeq\SchSL_{m,k}/G$.

Furthermore, \cite[Theorem 2.2]{Bidwell} ensures that $\Aut(\prod_{i=1}^rH_i)=\prod_{i=1}^r\Aut(H_i)$. From this, the Galois action on $G(\overline{k})$ actually corresponds to a morphism $\Gamma_k\rightarrow\prod_{i=1}^r\Aut(H_i)$. For each $i\in\{1,\dots,r\}$, the projection $\Gamma_k\rightarrow\Aut(H_i)$ of the Galois action on $\Aut(H_i)$ defines a $\overline{k}/k$-form $\widetilde{H_i}$, so that $G\simeq\prod_{i=1}^r\widetilde{H_i}$. Now, choose a positive integer $n$ and, for each $i\in\{1,\dots,r\}$, an embedding $\widetilde{H_i}\xhookrightarrow{}\SchSL_{n,k}$. Then, Lemma \ref{noname} ensures that $\SchSL_{m,k}/G$ is stably birational to $\prod_{i=1}^r\SchSL_{n,k}/\widetilde{H_i}$, so that after setting $X_i= \SchSL_{n,k}/\widetilde{H_i}$, one gets the required statement.
\end{proof}

\subsection{Nonabelian Shapiro's map}\label{sec:shapiro}

If $\Gamma$ is a profinite group, a \textit{$\Gamma$-group} is a discrete group~$E$ endowed with a continuous action $\Gamma\xrightarrow{\varphi}\Aut(E)$. We shall also say that $\varphi$ yields a $\Gamma$-group, and  write~$g.e\coloneqq \varphi(g)(e)$ for $(g,e)\in\Gamma\times E$ when there is no confusion as to what $\varphi$ is. We then denote by $E\rtimes_{\varphi}\Gamma$ the semidirect product relative to $\varphi$ and by $\Sec_\Gamma(E\rtimes_{\varphi}\Gamma)$ the set of continuous sections of 
$E\rtimes_{\varphi}\Gamma\rightarrow \Gamma$. We use the following well-known correspondence:

\begin{fact}[{\cite[Lemma 7]{Stix}}]\label{seccoc}
    Let $\Gamma$ be a profinite group and $\varphi:\Gamma\rightarrow\Aut(E)$ a $\Gamma$-group. Denote by $\pr_1:E\rtimes_{\varphi}\Gamma\rightarrow E$ the first projection -- which is not necessarily a morphism. Then the following map is bijective
\begin{equation*}
    \begin{tikzcd}[row sep= 0.2]
        \Sec_\Gamma(E\rtimes_{\varphi}\Gamma)\arrow[r] & \coc^1(\Gamma,E) \\
        \sigma \arrow[r, mapsto] & \pr_1\circ\sigma
    \end{tikzcd}.
\end{equation*}
\end{fact}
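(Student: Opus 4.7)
The plan is a direct unpacking of the semidirect product structure, verifying the bijection by exhibiting an explicit inverse. Any $\sigma \in \Sec_\Gamma(E \rtimes_\varphi \Gamma)$ automatically has the form $\sigma(g) = (a_g, g)$ for some $a_g \in E$, because composing $\sigma$ with the second projection $E \rtimes_\varphi \Gamma \to \Gamma$ must give $\mathrm{id}_\Gamma$. Hence $\pr_1 \circ \sigma$ is literally the function $g \mapsto a_g$, which is continuous whenever $\sigma$ is (the product topology on $E \rtimes_\varphi \Gamma$ makes both coordinate projections continuous).

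The main computation is showing that the homomorphism property of $\sigma$ corresponds exactly to the cocycle condition on $(a_g)_g$. Using the semidirect-product multiplication
$$(a_g, g)\cdot (a_h, h) \;=\; (a_g\cdot \varphi(g)(a_h),\, gh) \;=\; (a_g\cdot (g.a_h),\, gh),$$
the identity $\sigma(gh) = \sigma(g)\sigma(h)$ is equivalent to $a_{gh} = a_g \cdot (g.a_h)$, which is precisely the defining relation of $\coc^1(\Gamma,E)$ recalled in \S\ref{sec:tors}. Therefore $\pr_1 \circ \sigma \in \coc^1(\Gamma,E)$, and the map is well-defined.

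To complete the proof I would exhibit the inverse: given a continuous cocycle $(a_g)_g \in \coc^1(\Gamma,E)$, set $\sigma_a(g) := (a_g, g)$. The same computation shows $\sigma_a$ is a group homomorphism, the second projection of $\sigma_a$ is $\mathrm{id}_\Gamma$ so $\sigma_a$ is a section, and continuity is immediate since both components are continuous. The assignments $\sigma \mapsto \pr_1\circ\sigma$ and $(a_g)_g \mapsto \sigma_a$ are manifestly mutually inverse.

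I do not anticipate any substantive obstacle here: this is essentially a bookkeeping exercise transporting the classical splitting–cocycle correspondence for abstract groups to the profinite/continuous setting. The only point requiring any care is checking that continuity is preserved in both directions, which is automatic from the product topology on $E\rtimes_\varphi \Gamma$.
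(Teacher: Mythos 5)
Your proof is correct, and it is the standard unpacking of the semidirect-product structure: a section is forced to have the form $\sigma(g)=(a_g,g)$, the multiplication rule in $E\rtimes_\varphi\Gamma$ translates the homomorphism property of $\sigma$ into the cocycle identity $a_{gh}=a_g\cdot(g.a_h)$, and the evident assignment $a\mapsto(g\mapsto(a_g,g))$ gives the inverse; continuity passes both ways because $E\rtimes_\varphi\Gamma$ carries the product topology with $E$ discrete. Note that the paper does not prove this statement itself — it is recorded as a \emph{Fact} cited from Stix — so there is no internal argument to compare against, but your verification matches the standard proof of that cited lemma.
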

For a closed subgroup $\Delta\leq \Gamma$ and a $\Delta$-group $E$, we let
$$\Ind_{\Delta}^{\Gamma}E\coloneqq\{f:\Gamma\rightarrow E\,|\,  f(dg)=df(g)\text{ for all }(d,g)\in \Delta\times \Gamma\}$$
denote the \textit{induced $\Gamma$-group} endowed with the left action
$(\gamma.f)(g)=f(g.\gamma)$ for $\gamma\in \Gamma$ and~$f\in \Ind_\Delta^\Gamma E$.
We may further identify $\Ind_{\Delta}^{\Gamma}E$ with $E^{\Delta\backslash\Gamma}$, where $\Gamma$ acts on $\Delta\backslash\Gamma$ on the right, via the following lemma.
\begin{lem}\label{identinduced}
    Let $\Gamma$ be a profinite group, $\Delta$ a closed subgroup of $\Gamma$ and $E$ a $\Delta$-group. Fix  a set of representatives $\{\eps_i:i\in\Delta\backslash\Gamma\}$ of $\Delta\backslash\Gamma$ and set
    \begin{center}
        \begin{tikzcd}[row sep=0.2]
            \omega:\Ind_{\Delta}^{\Gamma}E \arrow[r] & E^{\Delta\backslash\Gamma} \\
            f\arrow[r,mapsto] & (f(\eps_i))_{i\in\Delta\backslash\Gamma}
        \end{tikzcd}.
    \end{center}
    Furthermore for $i\in\Delta\backslash\Gamma$, $\gamma\in\Gamma$, write $\eps_i\gamma=\delta_i(\gamma)\eps_{i.\gamma}$ for a unique  $\delta_i(\gamma)\in \Delta$. Then:
    \begin{enumerate}
        \item  the map $\omega$ is an isomorphism of groups, which endows $E^{\Delta\backslash\Gamma}$ with a $\Gamma$-group structure defined as the composition of the $\Gamma$-group $\Gamma\rightarrow\Aut(\Ind_{\Delta}^{\Gamma}E)$ with
        \begin{equation*}
        \begin{tikzcd}[row sep=0.2]
            \Aut(\omega):\Aut(\Ind_{\Delta}^{\Gamma}E)\arrow[r] & \Aut(E^{\Delta\backslash\Gamma}) \\ 
            \varphi\arrow[r,mapsto] & \omega\varphi\omega^{-1}
        \end{tikzcd};
        \end{equation*}
        \item furthermore, the action of $\gamma\in\Gamma$ on $(e_i)_{i\in\Delta\backslash\Gamma}\in E^{\Delta\backslash\Gamma}$ described in (1) is given by
        $$\gamma.(e_i)_{i\in\Delta\backslash\Gamma}=
        (\delta_i(\gamma).e_{i.\gamma})_{i\in\Delta\backslash\Gamma};$$
        \item consider the $\Gamma$-group structure  on $E^{\Delta\backslash\Gamma}$ in (1) and let $1$ denote the trivial coset in~$\Delta\backslash\Gamma$. For $a\in\coc^1(\Gamma,E^{\Delta\backslash\Gamma})$, if we write $a_\gamma=(e_i(\gamma))_{i\in\Delta\backslash\Gamma}$, then:
$$e_i(\gamma)=\delta_1(\eps_i)^{-1}.\left[e_1(\eps_i)^{-1}e_1(\delta_i(\gamma))
\left(\delta_{1}(\delta_i(\gamma)).e_1(\eps_{i.\gamma})\right)
\right];$$
        \item if $\Delta$ acts trivially on $E$, then $\omega$ does not depend on the choice of the set of representatives of $\Delta\backslash\Gamma$. Furthermore, the conclusion of (3) may be rewritten as:
$$e_i(\gamma)=e_1(\eps_i)^{-1}e_1(\delta_i(\gamma))e_1(\eps_{i.\gamma}).$$
    \end{enumerate}
\end{lem}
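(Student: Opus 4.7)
The plan is to handle all four parts by direct computation with the cocycle and action formulas, keeping careful track of how the defining identity $\eps_i\gamma=\delta_i(\gamma)\eps_{i.\gamma}$ shuffles an element of $\Gamma$ between cosets. For (1), the bijectivity of $\omega$ is immediate from the partition $\Gamma=\bigsqcup_i\Delta\eps_i$ together with $\Delta$-equivariance: any $f\in\Ind_\Delta^\Gamma E$ is determined by the tuple $(f(\eps_i))_i$, and conversely any tuple $(e_i)_i$ extends to an equivariant function via $f(d\eps_i):=d.e_i$. The homomorphism property is automatic because $\Ind_\Delta^\Gamma E$ is a group under pointwise multiplication, and the $\Gamma$-group structure on the right is defined to be the transport by $\omega$, so nothing further is needed for (1).

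Part (2) is then a one-line computation:
$$\omega(\gamma.f)_i=(\gamma.f)(\eps_i)=f(\eps_i\gamma)=f(\delta_i(\gamma)\eps_{i.\gamma})=\delta_i(\gamma).f(\eps_{i.\gamma})=\delta_i(\gamma).e_{i.\gamma},$$
using in turn the left-translation action on $\Ind_\Delta^\Gamma E$, the defining identity for $\delta_i(\gamma)$, and the $\Delta$-equivariance of $f$.

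For (3), I would unfold the cocycle identity $a_{\gamma\zeta}=a_\gamma\cdot(\gamma.a_\zeta)$ twice. Projecting to the first coordinate and substituting the action formula from (2) yields
$$e_1(\gamma\zeta)=e_1(\gamma)\cdot\bigl(\delta_1(\gamma).e_{1.\gamma}(\zeta)\bigr).$$
Applying this with $\gamma=\eps_i$, $\zeta=\eta$ and using $1.\eps_i=i$, one solves for $e_i(\eta)=\delta_1(\eps_i)^{-1}.\bigl(e_1(\eps_i)^{-1}\cdot e_1(\eps_i\eta)\bigr)$. Setting $\eta=\gamma$ and substituting $\eps_i\gamma=\delta_i(\gamma)\eps_{i.\gamma}$, a second application of the displayed identity (now with the roles of $\gamma,\zeta$ played by $\delta_i(\gamma),\eps_{i.\gamma}$) expands $e_1(\eps_i\gamma)$; the key simplification is that $1.\delta=1$ for any $\delta\in\Delta$, which turns $e_{1.\delta_i(\gamma)}(\eps_{i.\gamma})$ into $e_1(\eps_{i.\gamma})$. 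Assembling the pieces gives the stated formula. Part (4) then drops out: triviality of the $\Delta$-action on $E$ makes $f(d\eps_i)=f(\eps_i)$, so $\omega$ is independent of the chosen representatives, and every $\delta.(-)$ term in the formula of (3) collapses.

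The main obstacle I anticipate is purely bookkeeping: keeping straight the interplay between the element $\eps_i\gamma$, the coset $i.\gamma$, and the correcting factor $\delta_i(\gamma)\in\Delta$, particularly when these are nested inside a cocycle expression whose projection onto the first coordinate itself involves a further $\delta_1(\cdot)$. I would pause to record $1.\delta=1$ for $\delta\in\Delta$ and $1.\eps_i=i$ (which use $\eps_1\in\Delta$) as standalone observations before entering the calculation of (3), so that the manipulations there reduce to symbolic substitution.
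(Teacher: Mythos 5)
Your proposal is correct and follows essentially the same route as the paper's proof: both establish the coordinatewise cocycle identity $e_i(\gamma\zeta)=e_i(\gamma)\bigl(\delta_i(\gamma).e_{i.\gamma}(\zeta)\bigr)$, apply it once with $(\eps_i,\gamma)$ and once with $(\delta_i(\gamma),\eps_{i.\gamma})$, and use the identities $1.\eps_i=i$ and $1.\delta=1$ for $\delta\in\Delta$ (the paper uses the equivalent $i.\eps_i^{-1}=1$) to assemble the formula in (3). Your treatment of (1), (2), and (4) matches what the paper dismisses as straightforward, and your anticipated bookkeeping observations (isolating $1.\eps_i=i$ and $1.\delta=1$) are precisely the ones that make the computation go through.
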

\begin{proof}
    The proof of (1) and (2) is straightforward, and (4) follows immediately from (3). Let us prove (3). Since 
    $a$ is a cocycle, 
    $a_{\gamma\zeta}=a_{\gamma}\gamma(a_{\zeta})$ for $\gamma,\zeta\in\Gamma$, so that  (2) gives:
    \begin{equation}\label{cocycshapiro}
        e_i(\gamma\zeta)=e_i(\gamma)\bigl(\delta_i(\gamma).e_{i.\gamma}(\zeta)\bigr).
    \end{equation}
    Noting that
    $i.\eps_i^{-1}=1$,
    and applying (\ref{cocycshapiro}) with $(i,\gamma,\zeta)$ replaced by $(1,\eps_i,\gamma)$, we get:
    \begin{equation}\label{cocycshapiro1}
        e_1(\eps_i\gamma)=e_{i.\eps_i^{-1}}(\eps_i\gamma)=e_{1}(\eps_i)\bigl(\delta_{1}(\eps_i).e_i(\gamma)\bigr).
    \end{equation}
    Using  $i.\eps_i^{-1}=1$ again,  (\ref{cocycshapiro}) with $(i,\gamma,\zeta)$ replaced by
    $(1,\delta_i(\gamma),\eps_{i.\gamma})$ gives:
    \begin{equation}\label{cocycshapiro2}
        e_1\bigl(\delta_i(\gamma)\eps_{i.\gamma}\bigr)=e_{i.\eps_i^{-1}}\bigl(\delta_i(\gamma)\eps_{i.\gamma}\bigr)=
        e_1(\delta_i(\gamma))\bigl(\delta_1(\delta_i(\gamma)).e_1(\eps_{i.\gamma})\bigr).
    \end{equation}
    But since $\eps_i\gamma=\delta_i(\gamma)\eps_{i.\gamma}$, we can equate (\ref{cocycshapiro1}) and (\ref{cocycshapiro2}), yielding (3). 
\end{proof}
Returning to the general setting, note that the morphism of groups
$\Ind_{\Delta}^{\Gamma}E\rightarrow E$
mapping $f:\Gamma\rightarrow E$ to $f(1)$ is $\Delta$-equivariant, and hence induces \textit{Shapiro's map} of pointed sets
$$\sh:\coc^1(\Gamma,\Ind_{\Delta}^{\Gamma}E)\rightarrow\coc^1(\Delta,E)$$
which is known to be surjective by Shapiro's lemma \cite[Proof of Proposition 8]{Stix}.

Moreover, if $\Delta$ acts trivially on $E$, then  Shapiro's map coincides, via Fact \ref{seccoc}, with a map:
\begin{equation}\label{shapirotrivial}
    \sh':\Sec_\Gamma(\Ind^{\Gamma}_{\Delta}E\rtimes\Gamma)\rightarrow\Hom_{\cont}(\Delta,E).
\end{equation}
In this case, if we use notations of Lemma \ref{identinduced} and identify $\Ind_{\Delta}^{\Gamma}E$ with $E^{\Delta\backslash\Gamma}$ via $\omega$, the Shapiro map $\sh'$ can be described as follows:
\begin{equation}\label{shapirotrivialexplicit}
    \begin{tikzcd}[row sep=0.7, /tikz/column 1/.style={column sep=0.1},/tikz/column 2/.style={column sep=0.1}]
        \sh' & : & \Sec_\Gamma(E^{\Delta\backslash\Gamma}\rtimes\Gamma)\arrow[r] & \Hom_{\cont}(\Delta,E) \\
        & & \gamma\in\Gamma\mapsto((e_i(\gamma))_{i\in\Delta\backslash\Gamma},\gamma) \arrow[r,mapsto] & \delta\mapsto e_1(\delta)
    \end{tikzcd}.
\end{equation}

\section{Structure of forms for powers of groups}\label{secpg}

The following theorem is the main result regarding powers of groups. 
\begin{thm}\label{twistprod}
Let $N$ be a finite constant $k$-group and $t$ a positive integer. Assume further that $N$ is indecomposable, with trivial center. If an algebraic $k$-group~$G$ is a $k$-form of $N^t$, then there exists an \'{e}tale $k$-algebra $A$ of degree $t$ and a $k$-form $\widetilde{N_A}$ of $N_A\coloneqq N\otimes_kA$ such that $G=\Res_{A/k}(\widetilde{N_A})$.
\end{thm}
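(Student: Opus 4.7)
The plan is to use Lemma \ref{bidw} to identify $\Aut(N^t)$ with a wreath product, then peel off an étale algebra from the symmetric-group factor and recover descent data for an $A$-form by means of the nonabelian Shapiro map of Section \ref{sec:shapiro}.

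Since $N$ is indecomposable and center-free, Lemma \ref{bidw} gives an isomorphism $\Aut(N^t) \cong \Aut(N)^t \rtimes \fS_t$ with $\fS_t$ permuting coordinates. As $N$ is a constant $k$-group, the $\Gamma_k$-action on this target is trivial, so forms of $N^t$ are classified by conjugacy classes of continuous homomorphisms $c : \Gamma_k \to \Aut(N)^t \rtimes \fS_t$. Writing $c(\gamma) = (\varphi(\gamma), \sigma(\gamma))$, the projection $\sigma : \Gamma_k \to \fS_t$ is itself a continuous homomorphism, and the multiplicativity of $c$ amounts to the cocycle relation $\varphi(\gamma\zeta) = \varphi(\gamma) \cdot \sigma(\gamma)(\varphi(\zeta))$, i.e.\ $\varphi \in \coc^1(\Gamma_k, \Aut(N)^t)$ for the permutation action induced by $\sigma$.

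The homomorphism $\sigma$ endows $T \coloneqq \{1,\dots,t\}$ with a continuous $\Gamma_k$-action, hence determines an étale $k$-algebra $A$ of degree $t$ with $\Spec A \simeq T$ as $\Gamma_k$-sets. Let $\Delta \leq \Gamma_k$ be the stabiliser of $1 \in T$, so that $T \simeq \Delta \backslash \Gamma_k$. Lemma \ref{identinduced} then identifies $\Aut(N)^t$ (with its permutation action via $\sigma$) with $\Ind_\Delta^{\Gamma_k} \Aut(N)$, where $\Delta$ acts trivially on $\Aut(N)$. Applying Shapiro's map \eqref{shapirotrivial} to $\varphi$ produces a continuous homomorphism $\psi : \Delta \to \Aut(N)$. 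This $\psi$ is precisely a piece of Galois descent data that twists $N_{\overline k}$ into an $A$-form $\widetilde{N_A}$ of $N_A$.

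It then remains to check $G \cong \Res_{A/k}(\widetilde{N_A})$. Weil restriction along an étale algebra corresponds cohomologically to Shapiro's lemma: the $\overline k$-points of $\Res_{A/k}(\widetilde{N_A})$ are $\widetilde{N_A}(\overline k \otimes_k A) \simeq N(\overline k)^T$, and the resulting $\Gamma_k$-action combines the permutation action given by $\sigma$ with the twist by $\psi$ on the coordinate fixed by $\Delta$. The explicit formula in Lemma \ref{identinduced}(4) then matches this $\Gamma_k$-action on $\Aut(N^t)$ with the cocycle $c$ up to conjugation by an element of $\Aut(N)^t$. The main obstacle is bookkeeping in this last step: the nonabelian Shapiro map is only surjective on cocycles, so to see that the cocycle of $\Res_{A/k}(\widetilde{N_A})$ and the original $c$ define the same class, one must exhibit a coboundary witness. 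The explicit formula \eqref{shapirotrivialexplicit}, together with the freedom in choosing a set of coset representatives for $\Delta \backslash \Gamma_k$, supplies exactly this witness, corresponding to choosing an $\overline k$-isomorphism $\Res_{A/k}(\widetilde{N_A})_{\overline k} \simeq N_{\overline k}^t$.
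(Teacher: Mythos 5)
Your proposal follows essentially the same route as the paper: identify $\Aut(N^t)\cong\Aut(N)^t\rtimes\fS_t$ via Lemma~\ref{bidw}, read off a permutation homomorphism $\sigma:\Gamma_k\to\fS_t$ and hence an \'etale algebra $A$ with $\Spec A\simeq\{1,\dots,t\}$ as $\Gamma_k$-sets, and then use Shapiro's map and Weil restriction to reconstruct $G$. This is exactly the paper's strategy.

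There is, however, a genuine gap. You write ``Let $\Delta\leq\Gamma_k$ be the stabiliser of $1\in T$, so that $T\simeq\Delta\backslash\Gamma_k$.'' This identification only holds when the $\Gamma_k$-action on $T=\{1,\dots,t\}$ is \emph{transitive}, which is of course not true in general. When there are several orbits, $T$ is a disjoint union $\coprod_{i\in I}\Delta_i\backslash\Gamma_k$ for several stabilisers $\Delta_i$, and correspondingly $\Aut(N)^t$ decomposes as a product of distinct induced modules $\prod_{i\in I}\Ind_{\Delta_i}^{\Gamma_k}\Aut(N)$ rather than a single one. The paper handles this at the outset by decomposing $\{1,\dots,t\}$ into $\Gamma_k$-orbits, factoring the cocycle $\varphi$ accordingly so that $G=\prod_{i\in I}G_i$, and then running the Shapiro/Weil-restriction argument orbit by orbit. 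Your argument as written would silently collapse in the multi-orbit case, because the single Shapiro map $\coc^1(\Gamma_k,\Ind_\Delta^{\Gamma_k}\Aut(N))\to\coc^1(\Delta,\Aut(N))$ does not see the other orbits at all. The statement of the theorem already anticipates this: an \'etale $k$-algebra of degree $t$ is a product of separable field extensions, and $\Res_{A/k}$ is the product of Weil restrictions along each factor. You should make the orbit decomposition explicit before invoking Shapiro.

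A smaller issue: your final paragraph correctly identifies that one must verify that the cocycle defining $\Res_{A/k}(\widetilde{N_A})$ is cohomologous to the original $c$, but the assertion that ``the freedom in choosing a set of coset representatives... supplies exactly this witness'' is left as a gesture. The paper carries this out in Lemma~\ref{commutativeshapiro}, which shows that the $\Gamma_k$-action on $\Ind_{\Gamma_i}^{\Gamma_k}N$ induced by the Shapiro image $\theta_i$ differs from the given $\alpha_i$ precisely by an inner automorphism of $\Aut(N)^{\Delta\backslash\Gamma_k}\rtimes\Bij(\Delta\backslash\Gamma_k)$ (with explicit conjugator $(\varphi_1(\eps_i))_i$), which then gives the required $\Gamma_k$-equivariant isomorphism after adjusting by that conjugator. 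That computation is the heart of the matter and cannot be waved away.
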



Before giving a proof, let us recall how Weil restrictions are related to induced $\Gamma_k$-groups.

\begin{lem}[{\cite[Theorem 1.3.1]{MR0670072}}]\label{weilinduced}
Let $L/k$ be a finite separable extension of fields. Fix an embedding $\sigma_0:L\xhookrightarrow{} \overline{k}$ where $\overline{k}$ is a separable closure of $k$ and set $\Gamma_L=\Gal(\overline{k}/L)$ which is a closed subgroup of~$\Gamma_k$. Then for any algebraic group $G$ over $L$, there exists a $\Gamma_k$-equivariant bijection
$$(\Res_{L/k}G)(\overline{k})\simeq\Ind_{\Gamma_L}^{\Gamma_k}(G(\overline{k}))$$
of $\Gamma_k$-groups, where $\Gamma_L$ acts on the left on $G(\overline{k})=\Hom_{\Spec(L)}(\Spec(\sigma_0),G)$  by composition, where $\Spec(\sigma_0)$ denotes the $\Spec(L)$-scheme $\Spec(\overline{k})$ corresponding to the embedding $\sigma_0$.
\end{lem}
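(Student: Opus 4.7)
The plan is to unwind the defining property of the Weil restriction and then decompose the relevant étale algebra. For any $k$-algebra $R$ there is a canonical identification
$$(\Res_{L/k}G)(R)=G(R\otimes_k L),$$
where $R\otimes_k L$ is viewed as an $L$-algebra through the second tensor factor. Taking $R=\overline{k}$ yields
$$(\Res_{L/k}G)(\overline{k})=G(\overline{k}\otimes_k L),$$
and it suffices to analyse the right-hand side as a $\Gamma_k$-group.

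Since $L/k$ is finite separable, the map $a\otimes\ell\mapsto(a\sigma(\ell))_{\sigma}$ provides an $L$-algebra isomorphism
$$\overline{k}\otimes_k L\simeq\prod_{\sigma\in\Hom_k(L,\overline{k})}\overline{k}_{\sigma},$$
where $\overline{k}_{\sigma}$ denotes $\overline{k}$ endowed with the $L$-algebra structure through $\sigma$. Because $G$ is a scheme, it commutes with finite products and we obtain
$$G(\overline{k}\otimes_k L)=\prod_{\sigma}G(\overline{k}_{\sigma}).$$

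Next, I would translate the indexing into the coset space $\Gamma_L\backslash\Gamma_k$. The left action $\gamma\cdot\sigma=\gamma\circ\sigma$ of $\Gamma_k$ on $\Hom_k(L,\overline{k})$ is transitive with stabiliser $\Gamma_L$ at $\sigma_0$, so $\gamma\Gamma_L\mapsto\gamma\sigma_0$ defines a bijection $\Gamma_k/\Gamma_L\simeq\Hom_k(L,\overline{k})$; reindexing by $\Gamma_L\gamma\mapsto\gamma^{-1}\sigma_0$ casts it in the right-coset form used in Lemma~\ref{identinduced}. For each $\sigma=\gamma^{-1}\sigma_0$, the element $\gamma$ is an $L$-algebra isomorphism $\overline{k}_{\sigma}\to\overline{k}_{\sigma_0}=\overline{k}$ (since $\gamma\circ\sigma=\gamma\circ\gamma^{-1}\circ\sigma_0=\sigma_0$), hence induces a canonical group isomorphism $G(\overline{k}_{\sigma})\simeq G(\overline{k})$. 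Composing with a choice of coset representatives yields a bijection
$$(\Res_{L/k}G)(\overline{k})\simeq\prod_{\Gamma_L\gamma\in\Gamma_L\backslash\Gamma_k}G(\overline{k}),$$
and Lemma~\ref{identinduced} identifies the right-hand side with $\Ind_{\Gamma_L}^{\Gamma_k}G(\overline{k})$.

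The remaining task, and the only real obstacle, is verifying $\Gamma_k$-equivariance. The action of $\gamma'\in\Gamma_k$ on $(\Res_{L/k}G)(\overline{k})$ comes from its action on the first factor of $\overline{k}\otimes_k L$; tracing this through the product decomposition, the $\sigma$-component of $\gamma'\cdot(x_{\sigma})_{\sigma}$ is $\gamma'(x_{(\gamma')^{-1}\sigma})$, i.e., the action combines the permutation $\sigma\mapsto(\gamma')^{-1}\circ\sigma$ with a Galois twist on each factor $\overline{k}_{\sigma}$. Transported across the identification with $\Gamma_L\backslash\Gamma_k$ and the canonical isomorphisms $G(\overline{k}_{\sigma})\simeq G(\overline{k})$ supplied by the chosen representatives, this recovers exactly the right-translation action described in Lemma~\ref{identinduced}(2), which matches the canonical $\Gamma_k$-action on $\Ind_{\Gamma_L}^{\Gamma_k}G(\overline{k})$. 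The argument is essentially bookkeeping in coset conventions and Galois-twist placement; once these are fixed consistently, the equivariant bijection drops out. One then observes that all intermediate maps respect the pointwise group structure, delivering the desired isomorphism of $\Gamma_k$-groups.
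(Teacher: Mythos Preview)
Your argument is correct and is in fact the standard proof of this well-known identification. Note, however, that the paper does not supply its own proof of this lemma: it is stated with a citation to \cite[Theorem~1.3.1]{MR0670072} and used as a black box in the proof of Theorem~\ref{twistprod}. There is therefore nothing to compare against in the paper itself; what you have written is exactly the kind of argument one would give if asked to justify the cited result, and it meshes cleanly with the paper's Lemma~\ref{identinduced}, which packages the same coset bookkeeping you carry out.
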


We  also use the following technical lemma concerning Shapiro's map $\sh'$ from \eqref{shapirotrivial} when~$E$ is taken to be $\Aut(N)$. 
For this purpose, choosing a set of representatives of~$\Delta\backslash\Gamma$, we identify 
$\Aut(\Ind_\Delta^\Gamma N)$ with 
$\Aut(N^{\Delta\backslash\Gamma})$ via the isomorphism~
$\Aut(\omega)$ from Lemma \ref{identinduced}, and  $\Aut(N)^{\Delta\backslash\Gamma}\rtimes\Bij(\Delta\backslash\Gamma)$ with a subgroup of $\Aut(N^{\Delta\backslash\Gamma})=\Aut(\Ind_\Delta^\Gamma N)$ via Lemma \ref{bidw}. 
Eventually, we denote by
\begin{equation}\label{defrho}
\begin{tikzcd}[row sep=0.7]
    \rho:\Gamma\arrow[r] & \Bij(\Delta\backslash\Gamma) \\
    \gamma\arrow[r,mapsto] & \rho(\gamma):i\mapsto i.\gamma^{-1}
\end{tikzcd}
\end{equation}
the right action of $\Gamma$ on $\Delta\backslash\Gamma$. 
\begin{lem}\label{commutativeshapiro}
    Let $N$ be a finite group, $\Gamma$ a profinite group and $\Delta$ a closed subgroup of $\Gamma$ acting trivially on $\Aut(N)$. Let $\sigma\in\Sec_\Gamma(\Ind_{\Delta}^{\Gamma}\Aut(N)\rtimes\Gamma)$ and $\theta\coloneqq\sh'(\sigma)\in\Hom_{\cont}(\Delta,\Aut(N))$. Denote by $\Ind_{\Delta}^{\Gamma}N$ the $\Gamma$-group induced by $\theta$ and let $\beta:\Gamma\rightarrow\Aut(\Ind_{\Delta}^{\Gamma}N)$ the associated $\Gamma$-action. Then $\beta$ factors as: 
 \begin{center}
        \begin{tikzcd}[row sep=1.5cm, column sep= 0.8cm]
        \Gamma \arrow[r, "\sigma"] & 
        \Ind_{\Delta}^{\Gamma}\Aut(N)\rtimes\Gamma 
        \arrow[r, "(id{,}\rho)"] & 
        \Ind_{\Delta}^{\Gamma}\Aut(N)
        \rtimes\Bij(\Delta\backslash\Gamma) \arrow[r, "\psi"] &
        \Ind_{\Delta}^{\Gamma}\Aut(N)
        \rtimes\Bij(\Delta\backslash\Gamma)
    \end{tikzcd}
\end{center}
    for some inner automorphism $\psi$ of $\Aut(N)^{\Delta\backslash\Gamma}\rtimes \Bij(\Delta\backslash\Gamma)$.
\end{lem}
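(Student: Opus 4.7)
The plan is to identify both $\beta(\gamma)$ and the composition in the diagram with explicit elements of $\Aut(N)^{\Delta\backslash\Gamma}\rtimes\Bij(\Delta\backslash\Gamma)$ (embedded into $\Aut(N^{\Delta\backslash\Gamma})\simeq\Aut(\Ind_\Delta^\Gamma N)$ through $\iota$ and $\Aut(\omega)^{-1}$), and then to produce $\psi$ by a direct computation.

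First, I would unpack the left-hand side. Since $\Ind_\Delta^\Gamma N$ carries the $\Gamma$-action induced by the $\Delta$-action $\theta$ on $N$, part~(2) of Lemma~\ref{identinduced}, after transport through $\omega$, gives
\[\beta(\gamma)(n_i)_{i\in\Delta\backslash\Gamma}=\bigl(\theta(\delta_i(\gamma)).n_{i.\gamma}\bigr)_{i\in\Delta\backslash\Gamma},\]
so $\beta(\gamma)$ corresponds under $\iota$ to the element $((\theta(\delta_i(\gamma)))_i,\rho(\gamma))$. On the other side, writing $\sigma(\gamma)=(a_\gamma,\gamma)$ with $a_\gamma=(e_i(\gamma))_i$, the arrow $(id,\rho)$ sends this to $(a_\gamma,\rho(\gamma))$. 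Since $\Delta$ acts trivially on $\Aut(N)$, part~(4) of Lemma~\ref{identinduced} applies and yields
\[e_i(\gamma)=e_1(\eps_i)^{-1}\,e_1(\delta_i(\gamma))\,e_1(\eps_{i.\gamma}),\]
while the description \eqref{shapirotrivialexplicit} of $\sh'$ gives $\theta(\delta)=e_1(\delta)$ for $\delta\in\Delta$.

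Finally, I would set $\phi\coloneqq(e_1(\eps_i))_{i\in\Delta\backslash\Gamma}\in\Aut(N)^{\Delta\backslash\Gamma}$ and define $\psi$ to be conjugation by $(\phi,1)$. Unwinding the semidirect product with the permutation convention $\rho(\gamma).(\varphi_j)_j=(\varphi_{j.\gamma})_j$ of Lemma~\ref{bidw}, a short calculation gives
\[(\phi,1)(a_\gamma,\rho(\gamma))(\phi,1)^{-1}=\bigl(\phi\cdot a_\gamma\cdot\rho(\gamma)(\phi^{-1}),\,\rho(\gamma)\bigr),\]
whose first coordinate at component $i$ equals $e_1(\eps_i)\,e_i(\gamma)\,e_1(\eps_{i.\gamma})^{-1}$. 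Substituting the formula for $e_i(\gamma)$ collapses this to $e_1(\delta_i(\gamma))=\theta(\delta_i(\gamma))$, which matches $\beta(\gamma)$ componentwise and proves the factorisation. The most delicate step, and the one I would double-check, is tracking the direction of the $\Bij(\Delta\backslash\Gamma)$-action and the inverses so that $\rho(\gamma)(\phi^{-1})$ produces the shifted factor $e_1(\eps_{i.\gamma})^{-1}$ at component $i$ (rather than $e_1(\eps_{i.\gamma^{-1}})^{-1}$) needed to cancel against the rightmost term of $e_i(\gamma)$.
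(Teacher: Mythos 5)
Your proof is correct and follows essentially the same route as the paper's: both compute $\beta(\gamma)$ via Lemma~\ref{identinduced}(2) and the formula $\theta(\delta)=e_1(\delta)$, expand the cocycle associated to $\sigma$ using the $\Delta$-trivial specialization of Lemma~\ref{identinduced}(3)/(4), and then observe that the two resulting elements of $\Aut(N)^{\Delta\backslash\Gamma}\rtimes\Bij(\Delta\backslash\Gamma)$ differ by conjugation by $(e_1(\eps_i))_{i\in\Delta\backslash\Gamma}$. The paper leaves the final conjugation as an assertion, whereas you carry it out explicitly, and your index-tracking ($\rho(\gamma)^{-1}(i)=i.\gamma$, hence the shifted factor $e_1(\eps_{i.\gamma})^{-1}$) is correct — the worry you flag is indeed the place where a sign error would creep in, and you have it right.
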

\begin{proof}
Let $\{\eps_i:i\in\Delta\backslash\Gamma\}$ be a set of representatives for $\Delta\backslash\Gamma$, and $\delta_i(\gamma)\in\Delta$ such that $\eps_i\gamma=\delta_i(\gamma)\eps_{i.\gamma}$. 
    Write $\sigma(\gamma)=((\varphi_i(\gamma))_{i\in\Delta\backslash\Gamma},\gamma)$ for $\gamma\in \Gamma$, 
    so that $\varphi:\Gamma\to\Aut(N)^{\Delta\backslash\Gamma}$, $\gamma\mapsto (\varphi_i(\gamma))_{i\in \Delta\backslash\Gamma}$ lies in 
$\coc^1(\Gamma,\Aut(N)^{\Delta\backslash\Gamma})$ by Fact \ref{seccoc}. Note that by Lemma~\ref{identinduced}.(3):
    \begin{equation}\label{exprcocycleproof}       \varphi_i(\gamma)=\varphi_{1}(\eps_i)^{-1}\circ\varphi_{1}(\delta_i(\gamma))\circ\varphi_1(\eps_{i.\gamma}),\text{ for }\gamma\in \Gamma,i\in \Delta\backslash\Gamma.
    \end{equation}
    Recall that $\beta$ induces a $\Gamma$-action on $N^{\Delta\backslash\Gamma}$ via $(\Aut(\omega)\circ\beta)(\gamma)=\omega\circ\beta(\gamma)\circ\omega^{-1}$. Moreover, by  Lemma \ref{identinduced}.(2), we have 
    \begin{equation}\label{down}
        \begin{tikzcd}[row sep=0.2]
            \omega\circ\beta(\gamma)\circ\omega^{-1}: N^{\Delta\backslash\Gamma}\arrow[r] & N^{\Delta\backslash\Gamma} \\
            (n_i)_{i\in\Delta\backslash\Gamma} \arrow[r,mapsto] & (\delta_i(\gamma).n_{i.\gamma})_{i\in\Delta\backslash\Gamma}
        \end{tikzcd}
    \end{equation}
    Since $\Delta$ acts on $N$ through $\theta$, and $\theta(\delta) =\varphi_1(\delta)$ by the explicit description (\ref{shapirotrivialexplicit}) of $\sh'$, morphism (\ref{down}) takes the form:
    \begin{equation*}\label{down1}
        \begin{tikzcd}[row sep=0.2]
            \omega\circ\beta(\gamma)\circ\omega^{-1}: N^{\Delta\backslash\Gamma}\arrow[r] & N^{\Delta\backslash\Gamma} \\
            (n_i)_{i\in\Delta\backslash\Gamma} \arrow[r,mapsto] & (\varphi_1(\delta_i(\gamma))(n_{i.\gamma}))_{i\in\Delta\backslash\Gamma}
        \end{tikzcd}.
    \end{equation*}
By definition of $\iota$ from Lemma \ref{bidw} and by definition of $\rho$ in (\ref{defrho}), this is none but the image by $\iota$ of the automorphism
$$\left(\bigl(\varphi_1(\delta_i(\gamma))\bigr)_{i\in\Delta\backslash\Gamma}, \rho(\gamma)\right)\in \Aut(N)^{\Delta\backslash\Gamma}\rtimes \Bij(\Delta\backslash\Gamma).$$
However, by \eqref{exprcocycleproof}, this image is conjugate by the element $(\varphi_1(\eps_i))_{i\in\Delta\backslash\Gamma}\in \Aut(N)^{\Delta\backslash\Gamma}$ to 
$$ ((id,\rho)\circ \sigma)(\gamma)=(\varphi_i(\gamma),\rho(\gamma))$$
which proves the statement.
\end{proof}

We may now combine Lemmas \ref{weilinduced} and \ref{commutativeshapiro} to give a proof of Theorem \ref{twistprod}.

\begin{proof}[Proof of Theorem \ref{twistprod}]
Let $G$ be a $k$-form of $N^t$. After choosing an isomorphism of groups $G(\overline{k})\simeq N^t$, the action of $\Gamma_k$ on $G(\overline{k})$ gives rise to a continuous action of $\Gamma_k$ on $N^t$, that is, to a morphism $\varphi:\Gamma_k\rightarrow\Aut(N^t)$. By Lemma \ref{bidw}, one has $\Aut(N^t)\simeq\Aut(N)^t\rtimes \fS_t$, where~$\mathfrak{S}_t$ acts on $\Aut(N)^t$ by permuting the coordinates.  The composition $\psi:\Gamma_k\rightarrow\fS_t$ of~$\varphi$ with the projection  $\Aut(N)^t\rtimes \fS_t\rightarrow\fS_t$ yields   a right action of $\Gamma_k$ on $\{1,\dots,t\}$ defined by $l.\gamma\coloneqq\psi(\gamma)^{-1}(l)$. Now pick  a set of representatives $I\subseteq\{1,\dots,t\}$ of the distinct orbits of $\{1,\dots,t\}$ under the right action of $\Gamma_k$, so that~$N^t=\prod_{i\in I}N^{i.\Gamma_k}$. Thus 
$\varphi$ factors as
\begin{equation}\label{prodforms}
    \Gamma_k\xrightarrow{\prod_{i\in I}\alpha_i}\prod_{i\in I}\Aut(N^{i.\Gamma_k})\xhookrightarrow{}\Aut(N^t)
\end{equation}
where $\alpha_i:\Gamma_k\rightarrow\Aut(N^{i.\Gamma_k})$ denotes the action of $\Gamma_k$ on the $i$-th factor $N^{i.\Gamma_k}$ of $N^t$. Let us denote by~$G_i$ the $k$-form of $N^{i.\Gamma_k}$ corresponding to $\alpha_i\in\coc^1(k,\Aut(N^{i.\Gamma_k}))$. By the factorisation~(\ref{prodforms}), we thus have $G=\prod_{i\in I}G_i$.

Next, we fix $i\in I$ and rewrite $G_i$ as a Weil restriction. For this purpose, we first use Lemma \ref{bidw} to identify $\Aut(N^{i.\Gamma_k})=\Aut(N)^{i.\Gamma_k}\rtimes\Bij(i.\Gamma_k)$. Furthermore, if we denote by~$\rho_i:\Gamma_k\ra\Bij(i.\Gamma_k)$ the right action described above,  
then $\alpha_i$ factors uniquely as
\begin{equation}\label{factoralpha}
\Gamma_k\xrightarrow{\sigma_i}\Aut(N)^{i.\Gamma_k}\rtimes\Gamma_k\hookrightarrow\Aut(N)^{i.\Gamma_k}\rtimes\Bij(i.\Gamma_k)
\end{equation}
for $\sigma_i\in\Sec_\Gamma(\Aut(N)^{i.\Gamma_k}\rtimes\Gamma_k)$. Letting $\Gamma_i$ be the stabiliser of $i$ under the right action of~$\Gamma_k$ on $i.\Gamma_k$, we may further identify:
\begin{equation*}\label{identification}
i.\Gamma_k\simeq\Gamma_i\backslash\Gamma_k.
\end{equation*}
The Shapiro map (\ref{shapirotrivial}) supplies a morphism $\theta_i\coloneqq\sh'(\sigma_i):\Gamma_i\ra \Aut(N)$. Denote by~$\Ind_{\Gamma_i}^{\Gamma_k}N$ the $\Gamma_i$-group induced by $\theta_i$ and $\beta_i:\Gamma_k\rightarrow\Aut(\Ind_{\Gamma_i}^{\Gamma_k}N)$ the corresponding action of $\Gamma_k$. 
Since $\alpha_i=(id,\rho)\circ \sigma$ by \eqref{factoralpha}, Lemma \ref{commutativeshapiro} yields an inner automorphism $\psi_i$ of $\Aut(N^{i.\Gamma_k})$ such that
\begin{equation}\label{equ00}
    \Aut(\omega_i)\circ\beta_i=\psi_i\circ\alpha_i.
\end{equation}
Now, choosing $\chi_i\in\Aut(N^{i.\Gamma_k})$ such that $\psi_i^{-1}$ is the conjugation map by $\chi_i$, equation (\ref{equ00}) yields the following commutative diagram:
\begin{equation*}
    \begin{tikzcd}
         & \Gamma_k \arrow[dl, "\beta_i"'] \arrow[dr,"\alpha_i"] & \\
         \Aut(\Ind^{\Gamma_k}_{\Gamma_i}N) \arrow[rr, "\Aut(\chi_i\circ\omega_i)"', "\sim"] & & \Aut(N^{i.\Gamma_k})
    \end{tikzcd}.
\end{equation*}
In other words, the map
\begin{equation}\label{gammaequivariance}
    \chi_i\circ\omega_i:\Ind_{\Gamma_i}^{\Gamma_k}N\rightarrow N^{i.\Gamma_k}
\end{equation}
is a $\Gamma_k$-equivariant isomorphism of groups, where $\Gamma_k$ acts on $\Ind_{\Gamma_i}^{\Gamma_k}N$ (resp. $N^{i.\Gamma_k}$) via~$\beta_i$ (resp. $\alpha_i$). Furthermore, recall that by definition of $G_i$, there is a $\Gamma_k$-equivariant isomorphism
\begin{equation}\label{ident1}
    G_i(\overline{k})\simeq N^{i.\Gamma_k}
\end{equation}
where the action of $\Gamma_k$ on the left-hand side (resp. on the right-hand side) is the Galois action on points (resp. is given by $\alpha_i$).
Also, if we set $M_i\coloneqq\overline{k}^{\Gamma_i}$ and $\widetilde{N_{M_i}}$ the $M_i$-form of $N$ associated to $\theta_i\in\coc^1(M_i,\Aut(N))$, then Lemma \ref{weilinduced} supplies a $\Gamma_k$-equivariant isomorphism of groups
\begin{equation}\label{ident2}
    \Res_{M_i/k}(\widetilde{N_{M_i}})(\overline{k})\simeq\Ind_{\Gamma_i}^{\Gamma_k}N.
\end{equation}
where the action of $\Gamma_k$ on the left-hand side (resp. right-hand side) is the Galois action on points (resp. is given by $\beta_i$). Now, if we combine (\ref{ident1}) and~(\ref{ident2}) with (\ref{gammaequivariance}), the map $\chi_i\circ\omega_i$ may be rewritten as a $\Gamma_k$-equivariant isomorphism
$$G_i(\overline{k})\simeq\Res_{M_i/k}(\widetilde{N_{M_i}})(\overline{k})$$
where the action of $\Gamma_k$ on both sides is the Galois action on points. By Galois descent, this extends to an isomorphism of algebraic groups $$G_i\simeq \Res_{M_i/k}(\widetilde{N_{M_i}}).$$
Thus, by setting $A\coloneqq\prod_{i\in I}M_i$ and $\widetilde{N_A}\coloneqq\prod_{i\in I}\widetilde{N_{M_i}}$, we eventually get
$$G\simeq\prod_{i\in I}\Res_{M_i/k}(\widetilde{N_{M_i}})=\Res_{A/k}(\widetilde{N_A})$$
which proves the statement.
\end{proof}
We deduce the following description of homogeneous spaces for powers of groups.
\begin{cor}\label{ehtwist}
Let  $N$ be a finite constant $k$-group, $t$ a positive integer and~$G$ a $k$-form of $N^t$. Assume further that $N$ is indecomposable with trivial center. Then there exist an \'{e}tale $k$-algebra $A$ of degree $t$ and a $k$-form $\widetilde{N_A}$ of $N_A$ such that, for every $k$-embedding $G\xhookrightarrow{}\SchSL_{r,k}$ and $A$-embedding $\widetilde{N_A}\xhookrightarrow{}\SchSL_{s,A}$, the varieties $\SchSL_{r,k}/G$ and $\Res_{A/k}(\SchSL_{s,A}/\widetilde{N_A})$ are stably birationally equivalent.
\end{cor}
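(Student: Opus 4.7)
The plan is to apply Theorem \ref{twistprod} to obtain an étale $k$-algebra $A$ of degree $t$ and a $k$-form $\widetilde{N_A}$ of $N_A$ with $G = \Res_{A/k}(\widetilde{N_A})$. Fix any $A$-embedding $\widetilde{N_A}\hookrightarrow \SchSL_{s,A}$; composing with Weil restriction gives a $k$-embedding $G \hookrightarrow \Res_{A/k}(\SchSL_{s,A})$. Since Weil restriction is a right adjoint it preserves the étale torsor $\SchSL_{s,A}\to \SchSL_{s,A}/\widetilde{N_A}$ under the finite étale group scheme $\widetilde{N_A}$, and therefore
\[
\Res_{A/k}(\SchSL_{s,A}/\widetilde{N_A}) \simeq \Res_{A/k}(\SchSL_{s,A})/G.
\]
By Lemma \ref{noname} applied over $k$ and over $A$ (and using that Weil restriction preserves stable birational equivalence, since it commutes with products and sends $\mathbf{P}^n_A$ to a $k$-rational variety), both $\SchSL_{r,k}/G$ and $\Res_{A/k}(\SchSL_{s,A}/\widetilde{N_A})$ have a well-defined stable $k$-birational class, independent of the chosen embeddings. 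It thus suffices to produce a single stable $k$-birational equivalence between $\SchSL_{r,k}/G$ and $\Res_{A/k}(\SchSL_{s,A})/G$ for one choice of embeddings.

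For this I would form the auxiliary $k$-variety $Y = (\SchSL_{r,k} \times_k \Res_{A/k}(\SchSL_{s,A}))/G$ with the free diagonal $G$-action and consider the two projections $p_1 \colon Y \to \SchSL_{r,k}/G$ and $p_2 \colon Y \to \Res_{A/k}(\SchSL_{s,A})/G$. These are étale-locally trivial torsors under $\Res_{A/k}(\SchSL_{s,A})$ and $\SchSL_{r,k}$, respectively. Shapiro's isomorphism combined with Hilbert 90 yields
\[
\coh^1(K, \Res_{A/k}(\SchSL_{s,A})) \simeq \coh^1(K \otimes_k A, \SchSL_s) = 0
\]
for every field extension $K/k$, and similarly $\coh^1(K, \SchSL_{r,k}) = 0$, so both generic fibers are trivial torsors. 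Hence $Y$ is $k$-birational to $\SchSL_{r,k}/G \times_k \Res_{A/k}(\SchSL_{s,A})$ and to $\Res_{A/k}(\SchSL_{s,A})/G \times_k \SchSL_{r,k}$.

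Since $\SchSL_{r,k}$ is $k$-rational, and the Bruhat big cell $U^- \cdot T \cdot U^+ \subset \SchSL_{s,A}$ Weil-restricts to an open subscheme of $\Res_{A/k}(\SchSL_{s,A})$ isomorphic to a product of $k$-affine spaces and restricted tori $\Res_{A/k}(\mathbf{G}_m)^{s-1}$ (themselves $k$-rational), the group $\Res_{A/k}(\SchSL_{s,A})$ is also $k$-rational. Consequently $Y$ is stably $k$-birational to both $\SchSL_{r,k}/G$ and $\Res_{A/k}(\SchSL_{s,A})/G = \Res_{A/k}(\SchSL_{s,A}/\widetilde{N_A})$, completing the proof. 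The main obstacle is the birational triviality of the fibrations $p_1$ and $p_2$; this rests on the Shapiro--Hilbert 90 vanishing for Weil restrictions of $\SchSL_s$ together with the $k$-rationality of the Weil restriction itself.
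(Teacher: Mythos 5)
Your proof is correct and establishes the same conclusion, but by a more hands-on route than the paper's once past the common opening. Both arguments begin by applying Theorem \ref{twistprod} and the compatibility of Weil restriction with quotients by finite flat group schemes (the paper cites \cite[Corollary~A.5.4.(3)]{MR2723571}; your ``right adjoint'' justification for this step is not quite a proof, since quotients are colimits rather than limits, but the statement you want is exactly that reference). The paper then picks a faithful linear action of $\widetilde{N_A}$ on an affine space $\mathbf{A}^n_A$, observes that $\Res_{A/k}(\mathbf{A}^n_A)\simeq\mathbf{A}^{nt}_k$, so that $\Res_{A/k}(\mathbf{A}^n_A/\widetilde{N_A})\simeq\mathbf{A}^{nt}_k/G$, and concludes by the no-name lemma. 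You instead compare $\SchSL_{r,k}/G$ and $\Res_{A/k}(\SchSL_{s,A})/G$ directly via the compression $Y=(\SchSL_{r,k}\times_k\Res_{A/k}(\SchSL_{s,A}))/G$: Shapiro plus Hilbert~90 trivialize the two projection torsors generically, and then $k$-rationality of $\SchSL_{r,k}$ and of $\Res_{A/k}(\SchSL_{s,A})$ (via the Bruhat cell) yields the stable birational equivalences. This amounts to re-deriving in line the instance of the no-name lemma that is needed, and it requires the extra input that $\Res_{A/k}(\SchSL_{s,A})$ is $k$-rational --- a point the paper sidesteps entirely by working with $\mathbf{A}^n_A$, whose Weil restriction is transparently an affine space. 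Both approaches are sound; the paper's is slightly shorter for exactly this reason.
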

\begin{proof}
By Theorem \ref{twistprod}, there exists an \'{e}tale $k$-algebra $A$ of degree $t$ and an $A$-form $\widetilde{N_A}$ of $N_A$ endowed with an isomorphism $G\simeq\Res_{A/k}(\widetilde{N_A})$. Let  $\widetilde{N_A}$ act faithfully on an affine space $\mathbf{A}^n_A$. By \cite[Corollary~A.5.4.(3)]{MR2723571}, one then has that
$$\Res_{A/k}\left(\mathbf{A}^n_A/\widetilde{N_A}\right)\simeq \left(\Res_{A/k}\left(\mathbf{A}^n_A\right)\right)/\Res_{A/k}\left(\widetilde{N_A}\right).$$
Since $\Res_{A/k}\left(\mathbf{A}^n_A\right)\simeq \mathbf{A}^{nt}_k$ and $\Res_{A/k}\left(\widetilde{N_A}\right)\simeq G$, one deduces from the latter that $\Res_{A/k}\left(\mathbf{A}^n_A/\widetilde{N_A}\right)\simeq \mathbf{A}^{nt}_k/G$. The statement then follows from Lemma \ref{noname}.
\end{proof}
Denote by $\mathcal{P}_k$ any of the properties on affine $k$-groups listed in Notation \ref{varprop}.
\begin{cor}\label{descent}
Let $t$ be a positive integer and $N$ a constant indecomposable finite $k$-group with trivial center. If every $L$-form of $N_L$ verifies~$\mathcal{P}_L$ for every separable field extension~$L/k$ with $[L:k]\leq t$,  then any $k$-form of $N^t$ verifies~$\mathcal{P}_k$.
\end{cor}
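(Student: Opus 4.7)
The plan is to combine \Tref{twistprod} and Corollary~\ref{ehtwist} with standard preservation properties of $\mathcal{P}$ under products and Weil restriction. By \Tref{twistprod}, any $k$-form $G$ of $N^t$ can be written as $G\simeq\Res_{A/k}(\widetilde{N_A})$ for an \'etale $k$-algebra $A$ of degree $t$ and an $A$-form $\widetilde{N_A}$ of $N_A$. Decomposing $A=\prod_{i\in I}M_i$ into its simple factors, each $M_i/k$ is a separable field extension with $[M_i:k]\leq t$, and $\widetilde{N_A}=\prod_{i\in I}\widetilde{N_{M_i}}$ where $\widetilde{N_{M_i}}$ is an $M_i$-form of $N_{M_i}$. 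Fixing embeddings $\widetilde{N_{M_i}}\hookrightarrow\SchSL_{s,M_i}$ and using that Weil restriction along $A/k$ factors as a product of Weil restrictions along the $M_i/k$, Corollary~\ref{ehtwist} yields a stable $k$-birational equivalence
\[\SchSL_{r,k}/G\;\sim_{sb}\;\prod_{i\in I}\Res_{M_i/k}\bigl(\SchSL_{s,M_i}/\widetilde{N_{M_i}}\bigr),\]
and by hypothesis each factor $\SchSL_{s,M_i}/\widetilde{N_{M_i}}$ verifies $\mathcal{P}_{M_i}$.

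It therefore suffices to check that, for each of the properties (a)--(e) of Notation~\ref{varprop}, $\mathcal{P}$ is preserved under (i) finite products of $k$-varieties and (ii) Weil restriction $\Res_{L/k}$ along a finite separable extension $L/k$. Combined with the stable birational invariance recalled after Notation~\ref{varprop}, this will imply that $\SchSL_{r,k}/G$ verifies $\mathcal{P}_k$, so that $G$ itself verifies $\mathcal{P}_k$. Preservation under (i) is straightforward for each property. For (a) and (c), preservation under $\Res_{L/k}$ follows from the fact that $\Res_{L/k}(\mathbf{P}^n_L)$ is $k$-rational, as it contains the open subscheme $\Res_{L/k}(\mathbf{A}^n_L)\simeq\mathbf{A}^{n[L:k]}_k$. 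For (b), if $V_i\dashrightarrow Y_i$ is an $\widetilde{N_{M_i}}$-equivariant dominant rational map from a faithful representation witnessing strong versality over $M_i$, then applying $\Res_{M_i/k}$ yields a $\Res_{M_i/k}(\widetilde{N_{M_i}})$-equivariant dominant rational map from the affine $k$-space $\Res_{M_i/k}(V_i)$, on which the action remains faithful since Weil restriction preserves closed immersions.

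The main obstacle is verifying (ii) for properties (d) and (e). The key input is the canonical homeomorphism $(\Res_{L/k}X)(k_v)\simeq\prod_{w\mid v}X(L_w)$, which identifies $\prod_v(\Res_{L/k}X)(k_v)$ with $X(L_\Omega)$ and, via a Shapiro-type isomorphism applied to the Brauer group of a smooth compactification (obtained by taking $\Res_{L/k}$ of a smooth compactification of $X$), identifies the Brauer--Manin set of $\Res_{L/k}(X)$ with that of $X/L$. Under these identifications, weak approximation (resp.\ (BM)) on $\Res_{L/k}(X)$ over $k$ translates into the corresponding property for $X$ over $L$, giving the desired preservation. Combining all these with the stably birational equivalence displayed above completes the proof.
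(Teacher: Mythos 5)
Your overall route is the same as the paper's: apply Theorem~\ref{twistprod}/Corollary~\ref{ehtwist} to write $\SchSL_{r,k}/G$ as stably birational to a product of Weil restrictions $\Res_{M_i/k}(\SchSL_{s,M_i}/\widetilde{N_{M_i}})$ along separable extensions $M_i/k$ of degree $\leq t$, then invoke preservation of $\mathcal{P}$ under products and Weil restriction together with stable birational invariance. Your treatments of (a), (b), (c), (e) match the paper's (for (b) the explicit construction with $\Res_{M_i/k}(V_i)$ and \cite[Corollary~A.5.4]{MR2723571} is exactly what the paper does).

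The gap is in your treatment of (d). You assert that ``a Shapiro-type isomorphism applied to the Brauer group of a smooth compactification $\ldots$ identifies the Brauer--Manin set of $\Res_{L/k}(X)$ with that of $X/L$.'' This is not a routine consequence of Shapiro's lemma: Shapiro's lemma governs Galois cohomology of induced modules, whereas $\Brun$ is the \'etale cohomology group $\coh^2$ of a smooth compactification, and the geometric Brauer group of $\Res_{L/k}(\bar X)$ --- whose base change to $\bar k$ is a \emph{product} $\prod_{\sigma\colon L\hookrightarrow \bar k} \bar X^\sigma_{\bar k}$ --- is not simply an induced module. In general one does not even have an isomorphism $\Brun(\Res_{L/k}X)\simeq\Brun(X_L)$; what one proves is a \emph{compatibility} of the Brauer--Manin pairings (via corestriction-type maps) strong enough to transfer the property that the obstruction is the only one. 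This is precisely the content of \cite[Theorem~1.1]{MR4493005}, which the paper cites at this step. Your sketch is therefore claiming, without proof, a nontrivial theorem from the literature; you should either reproduce the argument of that reference or cite it explicitly, as the paper does. The remainder of your proof is sound.
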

\begin{proof}
Let $t$ be a positive integer and $G$ a $k$-form of $N^t$. Let us now choose an \'{e}tale $k$-algebra~$A$ of degree $t$ and an $A$-form $\widetilde{N_A}$ of $N_A$ as in Corollary \ref{ehtwist}. After choosing a $k$-embedding $G\xhookrightarrow{}\SchSL_{r,k}$ and an $A$-embedding $\widetilde{N_A}\xhookrightarrow{}\SchSL_{s,A}$, Corollary \ref{ehtwist} ensures that~$\SchSL_{r,k}/G$ and $\Res_{A/k}\left(\SchSL_{s,A}/\widetilde{N_A}\right)$ are stably birational. Since $A$ is \'{e}tale over $k$, it may be written as $A=\prod_{1\leq i\leq n}M_i$ where  the field extensions $M_i/k$, $i=1,\ldots,r$ are separable of degree at most $t$. Moreover, $\coh^1(A,\Aut(N))=\prod_{1\leq i\leq n}\coh^1(M_i,\Aut(N))$ so that one may write $\widetilde{N_A}=\prod_{1\leq i\leq n}\widetilde{N_{M_i}}$, where the $M_i$-group $\widetilde{N_{M_i}}$ is an $M_i$-form of $N_{M_i}$ for $i=1,\ldots,n$. From this, we infer that $\SchSL_{r,k}/G$ is stably birational to the variety $$\prod_{1\leq i\leq n}\Res_{M_i/k}\left(\SchSL_{s,M_i}/\widetilde{N_{M_i}}\right).$$Furthermore, for each $i\in\{1,\dots,n\}$, the variety $\SchSL_{s,M_i}/\widetilde{N_{M_i}}$ is assumed to satisfy $\mathcal{P}_{M_i}$. Since the Weil restriction of a variety which is retract rational, resp.\ rational, resp.\ satisfies that the Brauer-Manin obstruction to weak approximation is the only one, resp.\ satisfies that weak approximation holds off a finite set of places, satisfies the same property (see \cite[Theorem 1.1]{MR4493005} for the Brauer-Manin obstruction to weak approximation, and \cite[Example 4.2]{MR2985010} for weak approximation), the statement of this proposition follows when $\mathcal{P}_k$ is one of the properties  (a), (c), (d) and (e) in Notation \ref{varprop}.

Assume now that $\mathcal{P}_k$ is property (b) in Notation \ref{varprop}. For each $i\in\{1,\dots,n\}$, choose a strongly-versal $\widetilde{N_{M_i}}$-torsor $Y_i\rightarrow X_i$ where $X_i$ is $M_i$-rational. By definition of strong versality, one may pick an $M_i$-vector space $V_i$, a faithful action of $\widetilde{N_{M_i}}$ on $V_i$ and an $\widetilde{N_{M_i}}$-equivariant rational dominant map $V_i\dashrightarrow Y_i$. After passing to Weil restrictions and products one gets a rational $G$-equivariant map $\prod_i\Res_{M_i/k}(V_i)\dashrightarrow \prod_i\Res_{M_i/k}(Y_i)$, which is dominant by \cite[Corollary~A.5.4.(1)]{MR2723571}, and by \cite[Corollary~A.5.4.(3)]{MR2723571} we get a $G$-torsor $$\pi:\prod_i\Res_{M_i/k}(Y_i)\rightarrow \prod_i\Res_{M_i/k}(X_i)$$ whose base is rational. Since $\prod_i\Res_{M_i/k}(V_i)$ is a $k$-vector space on which $G$ acts linearly and faithfully, $\pi$ is strongly versal, which proves the statement.
\end{proof}

\section{Complete groups} \label{secpcg}
The following theorem reduces our study of homogenous spaces whose geometric stabiliser is a power of a complete group to homogeneous spaces whose stabiliser is the group itself.
\begin{thm}\label{completethm}
Let  $H$ be an indecomposable complete constant finite $k$-group and~$t$ a positive integer. Consider a homogeneous space $X$ of $\SchSL_{n,k}$ and $\overline{x}\in X(\overline{k})$ whose stabiliser is denoted by $G$. If $G(\overline{k})\simeq H^t$, then there exists an \'{e}tale $k$-algebra $A$ of degree~$t$ such that for any $A$-embedding $H_A\xhookrightarrow{}\SchSL_{r,A}$, the varieties $X$ and $\Res_{A/k}(\SchSL_{r,A}/H_A)$ are stably $k$-birational.
\end{thm}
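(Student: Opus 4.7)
The plan is to first guarantee a $k$-rational point on $X$, then invoke Corollary~\ref{ehtwist} to rewrite $X$ as a Weil restriction of a quotient by some $A$-form $\widetilde{H_A}$ of $H_A$, and finally exploit the completeness of $H$ to replace this twist by the constant group $H_A$ without changing the stable birational class.

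For the first step, since $H$ is complete we have $\out(H)=1$, so the map $\Aut(H)\to\out(H)$ admits the trivial section and $H$ is almost complete. By \Lref{almostcomplete}.(ii) the power $H^t$ is also almost complete, and \Pref{acrp} then forces $X(k)\neq\emptyset$. Choosing such a rational point identifies $X\simeq\SchSL_{n,k}/G$, where $G$ is the stabiliser, a $k$-form of $H^t$. Because $H$ is indecomposable with trivial center, Corollary~\ref{ehtwist} applies and yields an \'etale $k$-algebra $A$ of degree $t$ and an $A$-form $\widetilde{H_A}$ of $H_A$ such that, for any $A$-embedding $\widetilde{H_A}\hookrightarrow\SchSL_{s,A}$, the variety $X$ is stably $k$-birational to $\Res_{A/k}(\SchSL_{s,A}/\widetilde{H_A})$.

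The heart of the proof is to show that $\SchSL_{s,A}/\widetilde{H_A}$ and $\SchSL_{r,A}/H_A$ are stably $A$-birational; once this is established, it transfers to stable $k$-birationality of their Weil restrictions because $\Res_{A/k}(\mathbb{A}^n_A)\simeq\mathbb{A}^{nt}_k$. Completeness of $H$ gives $\Aut(H)=\operatorname{Inn}(H)\simeq H$, so $\widetilde{H_A}$ is realised as an inner twist ${}_PH_A$ for some right $H_A$-torsor $P$ over $\Spec A$. Pick a faithful linear representation $H\hookrightarrow\SchGL(V)$ over $k$; then $V_A$ carries a faithful linear $H_A$-action, and ${}_PV_A$ carries a faithful linear $\widetilde{H_A}$-action. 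The crucial observation is that the quotient morphism $V\to V/H$ is $H$-equivariant with \emph{trivial} $H$-action on $V/H$, so the twist by $P$ has no effect on the target; descending along an \'etale cover of $\Spec A$ trivialising $P$ then yields a canonical isomorphism $({}_PV_A)/\widetilde{H_A}\simeq V_A/H_A=(V/H)_A$. Applying \Lref{noname} over each field factor of the product $A$, both $\SchSL_{s,A}/\widetilde{H_A}$ and $\SchSL_{r,A}/H_A$ are stably $A$-birational to $(V/H)_A$, hence to each other.

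The main obstacle is precisely the identification $({}_PV_A)/\widetilde{H_A}\simeq V_A/H_A$, which encapsulates the principle that inner twists of a complete group become invisible once one passes to the quotient by $H$. With this in hand, concatenating $X\sim\Res_{A/k}(\SchSL_{s,A}/\widetilde{H_A})\sim\Res_{A/k}(\SchSL_{r,A}/H_A)$ in the stable $k$-birational category gives the conclusion.
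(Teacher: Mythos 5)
Your proof is correct, and it does take a genuinely different route through the final step. Both proofs agree on the opening moves: completeness of $H$ gives almost completeness of $H^t$ (via Lemma~\ref{almostcomplete}.(ii)), so $X(k)\neq\emptyset$ by Proposition~\ref{acrp}, and then Corollary~\ref{ehtwist} produces the \'etale algebra $A$ and the $A$-form $\widetilde{H_A}$. The divergence is in how one shows $\SchSL_{s,A}/\widetilde{H_A}$ and $\SchSL_{r,A}/H_A$ are stably $A$-birational. The paper decomposes $A=\prod M_i$, invokes $\out(H)=1$ to see that any group isomorphism $\widetilde{H_{M_i}}(\overline{k})\simeq H(\overline{k})$ is automatically compatible with the outer Galois action, and then feeds this into Lemma~\ref{outnnlem}.(ii) -- a Galois-cohomological argument via Hilbert's Theorem~90 in $\SchSL_N$ -- to produce an actual isomorphism of $\SchSL_{r,M_i}$-homogeneous spaces $\SchSL_{r,M_i}/\widetilde{H_{M_i}}\simeq\SchSL_{r,M_i}/H_{M_i}$. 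You instead read $\out(H)=1$ as $\Aut(H)=\operatorname{Inn}(H)\simeq H$, realise $\widetilde{H_A}$ as an inner twist ${}_PH_A$, and exploit the fact that twisting a quotient $V\to V/H$ (with trivial action on the target) by an $H$-valued cocycle leaves the target unchanged, so $({}_PV_A)/\widetilde{H_A}\simeq V_A/H_A$. That observation is, in essence, Lemma~\ref{twistsbgrp}.(1) from Section~\ref{A5}, which the paper develops and uses only for the $\fA_5$ case; you are reapplying the same twisting principle here where the paper chose the Hilbert~90 route instead. Your version yields only a stable birational equivalence rather than the honest isomorphism of homogeneous spaces the paper gets, but that is all the theorem requires. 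The paper's Lemma~\ref{outnnlem}.(ii) is a bit more flexible (it would handle any outer-compatible isomorphism, not just the inner-form case), but for a complete group the two viewpoints carry the same information, and your packaging arguably makes the role of completeness more transparent.
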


Note that here, as opposed to in Corollary \ref{ehtwist}, the group $H_A$ is embedded in $\SchSL_{r,A}$ as a constant group. Let us start by giving an application of Theorem \ref{completethm} when the given complete group is a symmetric group:
\begin{cor}\label{corsymm}
Let $k$ be a field, $X$ a homogeneous space of $\SchSL_{r,k}$ and $\overline{x}\in X(\overline{k})$. Denote by~$G$ the stabiliser of $\overline{x}$. If there exist positive integers $n,t$ with $n\not\in\{2,6\}$ and $G(\overline{k})\simeq \fS_n^t$, then $X$ is stably rational.
\end{cor}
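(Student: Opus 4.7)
The plan is to reduce, via Theorem \ref{completethm}, to showing stable rationality of a Weil restriction of $\SchSL_{r,A}/(\fS_n)_A$ along an étale $k$-algebra $A$, and then invoke the classical Noether rationality theorem for $\fS_n$ together with the preservation of stable rationality under products and Weil restrictions.

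First I would verify the hypotheses of Theorem \ref{completethm}. As recalled in Section~\ref{secacs}, the group $\fS_n$ is complete for $n \neq 2,6$ (center-free with all automorphisms inner) and indecomposable (having no nontrivial direct factor). The theorem then yields an étale $k$-algebra $A$ of degree $t$ such that, for any embedding $(\fS_n)_A \hookrightarrow \SchSL_{r,A}$ of the constant group, $X$ is stably $k$-birational to $\Res_{A/k}(\SchSL_{r,A}/(\fS_n)_A)$.

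Next I would decompose $A = \prod_{i=1}^s M_i$ as a product of finite separable field extensions of $k$, yielding the factorisation
$$\Res_{A/k}(\SchSL_{r,A}/(\fS_n)_A) \simeq \prod_{i=1}^s \Res_{M_i/k}(\SchSL_{r,M_i}/(\fS_n)_{M_i}).$$
For each $i$, stable $M_i$-rationality of $\SchSL_{r,M_i}/(\fS_n)_{M_i}$ follows from the fundamental theorem on symmetric polynomials---which says that the quotient of the standard permutation representation $\mathbf{A}^n_{M_i}$ by $\fS_n$ is itself an affine space---combined with Lemma \ref{noname}.

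Finally I would conclude using that Weil restriction along a finite separable extension preserves stable rationality (via $\Res_{M_i/k}(\mathbf{A}^m_{M_i}) \simeq \mathbf{A}^{m[M_i:k]}_k$ and the fact that Weil restriction commutes with open immersions and products), and that a finite product of stably rational varieties is stably rational. The essential work has already been carried out in Theorem \ref{completethm}; the present argument is an assembly of classical rationality facts and I do not anticipate any further obstacle.
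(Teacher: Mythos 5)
Your proposal is correct and follows essentially the same approach as the paper: reduce via Theorem \ref{completethm} to a Weil restriction of $\SchSL_{r,A}/(\fS_n)_A$, invoke symmetric-polynomial rationality together with the no-name lemma, and conclude by the compatibility of Weil restriction with rationality. The only cosmetic difference is that you first decompose $A$ into a product of fields $M_i$ before passing to Weil restrictions, whereas the paper works directly with $A$ and spells out the Weil-restriction-preserves-stable-rationality step by restricting a birational map between products of projective spaces.
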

\begin{proof}
First note that the symmetric group $\fS_n$ is complete \cite[Chapter IV, \S13, p.92]{MR0109842}. Further, since it is indecomposable, Theorem \ref{completethm} supplies an \'{e}tale $k$-algebra $A$ of degree~$t$ such that $X$ is stably birational to $\Res_{A/k}(\SchSL_{r,A}/(\fS_n)_A)$. The fundamental theorem of symmetric polynomials, combined with Lemma \ref{noname}, ensures that $\SchSL_{r,k}/(\fS_n)_k$ is stably $k$-rational, meaning that there exist nonnegative integers $r$ and $s$ and a birational $k$-map
\begin{center}\begin{tikzcd}\mathbf{P}^r_k\times_k\SchSL_{r,k}/(\fS_n)_k\arrow[r,dashed,"\sim"] & \mathbf{P}^s_k\end{tikzcd}.\end{center}
After tensoring by $\Spec(A)$ and passing to Weil restriction along $A/k$, we derive from the latter a birational $k$-map
\begin{center}\begin{tikzcd}\Res_{A/k}(\mathbf{P}^r_A)\times_k\Res_{A/k}(\SchSL_{r,A}/(\fS_n)_A)\arrow[r,dashed,"\sim"] & \Res_{A/k}(\mathbf{P}^s_A)\end{tikzcd}.\end{center}
Since $\Res_{A/k}(\mathbf{A}^r_A)$ (resp. $\Res_{A/k}(\mathbf{A}^s_A)$), which is an affine space, is an open subset of $\Res_{A/k}(\mathbf{P}^r_A)$ (resp. $\Res_{A/k}(\mathbf{P}^s_A)$), the latter is $k$-rational. From this, we conclude that $\Res_{A/k}(\SchSL_{r,A}/(\fS_n)_A)$ is stably $k$-rational, which proves the statement.
\end{proof}

For the proof of Theorem \ref{completethm}, we need the following lemma that classifies the stabilisers of a homogeneous space of $\SchSL_n$ in terms of the outer Galois action on one of them.
We use the following terminology. Given an isomorphism of finite groups $A\xrightarrow{\varphi} B$ and a continuous action of a profinite group $\Gamma$ on $A$, we shall call the action of $\sigma\in \Gamma$ on $B$, given by $b\mapsto \varphi(\sigma\varphi^{-1}(b))$,  {\it  the action induced by} $\varphi$.  Given two continuous actions $\alpha:\Gamma\ra\Aut(A)$ and $\beta:\Gamma\ra \Aut(B)$ we say that $\varphi$ is compatible with the outer actions of $\Gamma$ on $A$ and $B$ if the following diagram is commutative:
\begin{center}
\begin{tikzcd}
& \out(A)\arrow[dd, "\out(\varphi)"] \\
\Gamma \arrow[ur]\arrow[rd] & \\
& \out(B)
\end{tikzcd}
\end{center}
where the upper diagonal map (resp. the lower diagonal map) is the composition of $\alpha$ (resp.~$\beta$) 
with the quotient morphism $\Aut(A)\rightarrow\out(A)$ (resp. $\Aut(B)\rightarrow\out(B)$), and~$\out(\varphi)$ is induced by the isomorphism $\Aut(\varphi):\Aut(A)\rightarrow\Aut(B)$.

\begin{lem}\label{outnnlem}
Let $X$ be a homogeneous space of~$\SchSL_{N,k}$ and $x\in X(k)$ a rational point whose stabiliser is a finite group~$G$ over~$k$.
\begin{enumerate}[label=(\roman*)]
\item Let $b\in \SchSL_N(\overline{k})$ be such that $bx\in X(k)$. If $H$ denotes the stabiliser of $bx$, then $H=bGb^{-1}$. Moreover, via the isomorphism $H(\overline{k})\simeq G(\overline{k})$ mapping $h\in H(\overline{k})$ to~$b^{-1}hb$, the induced action of $\sigma\in \Gamma_k$ on $G(\overline{k})$ is given by:
$$g\mapsto b^{-1}\sigma(b)\sigma(g)\sigma(b^{-1})b\text{ for }g\in G(\oline k),\sigma\in \Gamma_k.$$
\item Conversely, let $H$ be a finite group over $k$ endowed with an isomorphism of finite groups $H(\overline{k})\simeq G(\overline{k})$ compatible with the outer action of~$\Gamma_k$. Assume further that~$G(\overline{k})$ has trivial center. Then there exists $y\in X(k)$ whose stabiliser is $H$.
\end{enumerate}
\end{lem}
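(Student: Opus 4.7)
Part (i) is a direct calculation. I would verify that $bGb^{-1}$ is the stabiliser of $bx$ in $\SchSL_N(\overline{k})$ by unwinding the action, and note that this subgroup descends to a $k$-subgroup $H\subseteq\SchSL_N$ because $bx\in X(k)$ is Galois-invariant. The formula for the induced action then follows by substituting $\varphi:h\mapsto b^{-1}hb$ into the definition $g\mapsto\varphi(\sigma(\varphi^{-1}(g)))$ recalled above the lemma, using that the Galois action on the inclusion $H(\overline{k})\subseteq\SchSL_N(\overline{k})$ is inherited from $\SchSL_N$.

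For part (ii), the plan is to find $b\in\SchSL_N(\overline{k})$ such that the stabiliser of $bx$ realises $H$, via the formula from part (i). Set $M=G(\overline{k})$ and denote by $\alpha:\Gamma_k\to\Aut(M)$ the Galois action coming from $G$. Transporting the Galois action on $H(\overline{k})$ through the given isomorphism yields a second action $\beta:\Gamma_k\to\Aut(M)$. The outer-action compatibility means that $\beta(\sigma)\alpha(\sigma)^{-1}\in\mathrm{Inn}(M)$ for every $\sigma\in\Gamma_k$; since $M$ is center-free, the map $\mathrm{inn}:M\xrightarrow{\sim}\mathrm{Inn}(M)$ is an isomorphism, so I can uniquely define $c_\sigma\in M$ by $\mathrm{inn}(c_\sigma)=\beta(\sigma)\alpha(\sigma)^{-1}$. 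Expanding $\beta(\sigma\tau)=\beta(\sigma)\beta(\tau)$ using the identity $\alpha(\sigma)\circ\mathrm{inn}(m)\circ\alpha(\sigma)^{-1}=\mathrm{inn}(\alpha(\sigma)(m))$ and then applying $\mathrm{inn}^{-1}$ (well defined by centerlessness) will yield the cocycle identity $c_{\sigma\tau}=c_\sigma\cdot\sigma(c_\tau)$, where $\sigma(c_\tau)=\alpha(\sigma)(c_\tau)$ is the natural Galois action on $M$.

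Composing with the inclusion $G(\overline{k})\hookrightarrow\SchSL_N(\overline{k})$ yields a $1$-cocycle valued in $\SchSL_N(\overline{k})$; by Hilbert~90, i.e.\ $\coh^1(k,\SchSL_N)=1$, it is a coboundary, so there exists $b\in\SchSL_N(\overline{k})$ with $c_\sigma=b^{-1}\sigma(b)$. Setting $y:=bx$, the identity $\sigma(y)=\sigma(b)x=b\cdot c_\sigma\cdot x=bx$ (using $c_\sigma\in G(\overline{k})=\mathrm{Stab}(x)$) shows $y\in X(k)$. Part (i) then identifies the stabiliser $H'$ of $y$ as a $k$-subgroup of $\SchSL_N$ whose induced Galois action on $M\simeq H'(\overline{k})$ is $\mathrm{inn}(c_\sigma)\circ\alpha(\sigma)=\beta(\sigma)$, i.e.\ coincides with the transported Galois action on $H(\overline{k})$; Galois descent thus gives $H'\simeq H$ as $k$-groups. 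The main technical point is the centerlessness of $G(\overline{k})$, which is essential both to lift the outer cocycle $\beta\alpha^{-1}:\Gamma_k\to\mathrm{Inn}(M)$ to an honest $M$-valued $1$-cocycle and to ensure injectivity of $\mathrm{inn}$ when extracting the cocycle relation, without which Hilbert~90 could not be invoked to produce the conjugating element $b$.
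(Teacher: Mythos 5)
Your proposal is correct and follows essentially the same route as the paper: part (i) is the same direct computation, and in part (ii) both arguments use centerlessness to lift the outer cocycle $\beta(\sigma)\alpha(\sigma)^{-1}\in\mathrm{Inn}(G(\overline{k}))$ to a genuine $G(\overline{k})$-valued $1$-cocycle, push it into $\SchSL_N$, apply Hilbert~90 to produce $b$, and then invoke part (i) together with Galois descent to identify the stabiliser of $bx$ with $H$. The only cosmetic difference is that the paper factors the cocycle-identity verification into a separate Reminder~\ref{sescocycle}, whereas you verify it inline.
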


To give a proof of this lemma, we remind the following fact from Galois cohomology.

\begin{remi}\label{sescocycle}
Consider a short exact sequence of discrete groups
\begin{center}
\begin{tikzcd}
1 \arrow[r] & H \arrow[r] & E \arrow[r] & Q \arrow[r] & 1
\end{tikzcd}

\end{center}
and $\Gamma$ a profinite group. Let $\alpha,\beta:\Gamma\rightarrow E$ be two continuous morphisms coequalised by the map $E\rightarrow Q$. Then the map $\sigma\in\Gamma\mapsto\beta(\sigma)\alpha(\sigma)^{-1}$ is a cocycle in $\coc^1(\Gamma,H)$ where the action of $\Gamma$ on $H$ is given by $\sigma.h=\alpha(\sigma)h\alpha(\sigma)^{-1}$, for any $\sigma\in\Gamma$ and $h\in H$.
\end{remi}
\begin{proof}
By assumption, $h_{\sigma}\coloneqq\beta(\sigma)\alpha(\sigma)^{-1}$ lies in $H$ for $\sigma\in\Gamma$. Then $(h_{\sigma})$ lies in $\coc^1(\Gamma,H)$ since, for all $\sigma,\tau\in\Gamma$, one has:
$$h_{\sigma\tau}=\beta(\sigma)\beta(\tau)\alpha(\tau)^{-1}\alpha(\sigma)^{-1}=\beta(\sigma)\alpha(\sigma)^{-1}\alpha(\sigma)\beta(\tau)\alpha(\tau^{-1})\alpha(\sigma)^{-1}=h_{\sigma}\alpha(\sigma)h_{\tau}\alpha(\sigma)^{-1}.$$
\end{proof}

\begin{proof}[{Proof of Lemma \ref{outnnlem}}]
For (i), note that the action induced by the isomorphism $\varphi:H(\oline k)\ra G(\oline k), h\mapsto bhb^{-1}$ is given by:
 $$\sigma\cdot g = \varphi\sigma\varphi^{-1}(g)=b^{-1}\sigma(bgb^{-1})b=b^{-1}\sigma(b)\sigma(g)\sigma(b^{-1})b, \text{ for }g\in G(\oline k),\sigma\in \Gamma_k.$$

Let  $H$ be as in the statement of (ii) and denote by $\varphi: H(\overline{k})\rightarrow G(\overline{k})$ the given isomorphism compatible with the outer action of $\Gamma_k$. Denote by $\alpha:\Gamma_k\rightarrow\Aut(G(\overline{k}))$ the Galois action on $G(\overline{k})$ and $\beta:\Gamma_k\rightarrow\Aut(G(\overline{k}))$ the action induced by applying $\varphi$ to the Galois action on~$H(\overline{k})$. Since $\varphi$ is compatible with the outer action of $\Gamma_k$, the map $\Aut(G(\overline{k}))\rightarrow\out(G(\overline{k}))$ is a coequaliser of $\alpha$ and $\beta$. As $G(\overline{k})$ has trivial center,  Reminder~\ref{sescocycle} applied to the short exact sequence
\begin{center}
\begin{tikzcd}
1 \arrow[r] & G(\overline{k}) \arrow[r,"\iota"] & \Aut(G(\overline{k})) \arrow[r] & \out(G(\overline{k})) \arrow[r] & 1
\end{tikzcd}
\end{center}
where $\iota$ maps $g$ to the associated inner automorphism, implies that $\sigma\mapsto h_{\sigma}\coloneqq \beta(\sigma)\alpha(\sigma)^{-1}$ lies in $\coc^1(k,G)$. Using the embedding $G(\overline{k})\xhookrightarrow{}\SchSL_N(\overline{k})$, this cocycles also lies in $\coc^1(k,\SchSL_N)$. Hilbert's theorem~$90$ hence supplies $b\in\SchSL_N(\overline{k})$ such that $h_{\sigma}=b^{-1}\sigma(b)$ for all $\sigma\in\Gamma_k$. From the fact that~$b^{-1}\sigma(b)$ is in the stabiliser $G(\overline{k})$ of $x\in X(k)$, we infer that $\sigma(bx)=bx$ for all $\sigma\in\Gamma_k$ so that $bx\in X(k)$.

To summarise, for $\sigma\in\Gamma_k$ and $g\in G(\overline{k})$, since $b^{-1}\sigma(b)$ acts on $G(\overline{k})$ via the inner automorphism $\iota(b^{-1}\sigma(b))$, we have:
$$\beta(\sigma)\circ\alpha(\sigma^{-1})(g)=b^{-1}\sigma(b)g\sigma(b^{-1})b$$
which boils down to
$$\beta(\sigma)(\sigma^{-1}(g))=b^{-1}\sigma(b)g\sigma(b^{-1})b.$$
This being true for all $g$, one eventually gets that for all $\sigma\in\Gamma_k$ and $g\in G(\overline{k})$, the following holds:
$$\beta(\sigma)(g)=b^{-1}\sigma(b)\sigma(g)\sigma(b^{-1})b.$$
Thus, by (i), $\beta$ coincides with the Galois action on the stabiliser of $bx$. By Galois descent, we  get an isomorphism of $H$ with the stabiliser of the rational point $bx$. 
\end{proof}
\begin{proof}[Proof of Theorem \ref{completethm}]
Since $H$ is complete, assertion (ii) of Lemma \ref{almostcomplete} ensures that $H^t$ is almost complete. From Proposition~\ref{acrp}, one then infers that $X$ has a rational point. Thus, there exists a $k$-form~$G$ of $H^t$, an embedding $G\xhookrightarrow{}\SchSL_{n,k}$ and an isomorphism of $\SchSL_{n,k}$-homogeneous spaces $X\simeq\SchSL_{n,k}/G$.

By Corollary \ref{ehtwist}, there exists an \'{e}tale $k$-algebra $A$ of degree $t$ and an $A$-form~$\widetilde{H_A}$ of $H_A$ such that for any embedding $\widetilde{H_A}\xhookrightarrow{}\SchSL_{r,A}$, the variety $X$ is stably birational to~$\Res_{A/k}(\SchSL_{r,A}/\widetilde{H_A})$. Now, since $A$ is \'{e}tale over $k$, it may be written as $A=\prod_{1\leq i\leq d}M_i$ where for $i\in\{1,\dots,d\}$, the field extension $M_i/k$ is separable and finite. Furthermore, since $\coh^1(A,\Aut(H))=\prod_{1\leq i\leq d}\coh^1(M_i,\Aut(H))$, one may write $\widetilde{H_A}=\prod_{1\leq i\leq d}\widetilde{H_{M_i}}$ where  the $M_i$-group $\widetilde{H_{M_i}}$ is an $M_i$-form of $H_{M_i}$ for $=1,\ldots,d$. From this, one deduces that
\begin{equation}\label{isomres}
\Res_{A/k}(\SchSL_{r,A}/\widetilde{H_A})= \prod_{i=1}^d\Res_{M_i/k}(\SchSL_{r,M_i}/\widetilde{H_{M_i}}).
\end{equation}
For $i\in\{1,\dots,d\}$, one may then choose an isomorphism 
\begin{equation}\label{isomapp}
\widetilde{H_{M_i}}(\overline{k})\simeq H(\overline{k})
\end{equation}
Since $H$ is complete, its outer automorphisms are trivial, so that isomorphism (\ref{isomapp}) is compatible with the outer Galois action of $\Gamma_k$. Lemma \ref{outnnlem}.(ii) may then be applied to ensure that~$H_{M_i}$ is embedded in $\SchSL_{r,M_i}$ and there exists an isomorphism of $\SchSL_{r,M_i}$-homogeneous spaces
\begin{equation}\label{isomhe}
\SchSL_{r,M_i}/\widetilde{H_{M_i}} \simeq \SchSL_{r,M_i}/H_{M_i}.
\end{equation}
The combination of (\ref{isomres}) and (\ref{isomhe}) then implies that $$\Res_{A/k}(\SchSL_{r,A}/\widetilde{H_A})\simeq \prod_{i=1}^d\Res_{M_i/k}(\SchSL_{r,M_i}/H_{M_i}).$$The latter may be rewritten as $\Res_{A/k}(\SchSL_{r,A}/\widetilde{H_A})\simeq \Res_{A/k}(\SchSL_{r,A}/H_A)$, to which $X$ is thus stably birational.
\end{proof}

\section{Forms of powers of $\fA_5$}\label{A5}

In this section, we assume that $k$ is a field of characteristic $0$ and we construct strongly-versal torsors for twisted forms of powers of $\fA_5$. 
Recall that $\fA_5$ is center-free and there is a commutative diagram
\begin{equation}\label{sesA5}
\begin{tikzcd}
\Aut(\fA_5)\arrow[r]\arrow[d,equal] & \out(\fA_5)\arrow[d,equal]\\
\fS_5\arrow[r] & C_2
\end{tikzcd}
\end{equation}
where the bottom map is the signature morphism. 

\begin{thm}\label{twistedAlternate}
Suppose $\charak k=0$,  let 
$t$ a positive integer, and $G$ a $k$-form of~$\fA_5^t$. Then there exists a strongly-versal $G$-torsor $Y\rightarrow X$ where $X$ is $k$-rational of dimension~$2t$.
\end{thm}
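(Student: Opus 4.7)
By Corollary~\ref{descent} applied to property~(b) of Notation~\ref{varprop}, it suffices to show that for every finite separable extension $M/k$ and every $M$-form $\widetilde H$ of $(\fA_5)_M$, there exists a strongly versal $\widetilde H$-torsor $Y \to X$ with $X$ an $M$-rational variety of dimension~$2$. Inspecting the proof of Corollary~\ref{descent} in case~(b), the resulting strongly versal $G$-torsor has base $\prod_i \Res_{M_i/k}(X_i)$ of dimension $2\sum_i [M_i:k] = 2t$, where $A = \prod_i M_i$ is the decomposition provided by Theorem~\ref{twistprod}. It therefore remains to treat the single-factor case.

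By diagram~\eqref{sesA5}, $\Aut(\fA_5) = \fS_5$, so $M$-forms of $(\fA_5)_M$ are classified by $\coh^1(M, \fS_5)$, equivalently by degree-$5$ étale $M$-algebras $E$. The cocycle classifying the given form $\widetilde H$ also determines such an $E$, and the natural inclusion $\fA_5 \hookrightarrow \fS_5$ twists to an inclusion of $\widetilde H$ into the $M$-form of $\fS_5$ acting on $E$. Concretely, $\widetilde H$ acts faithfully on the $5$-dimensional $M$-vector space underlying $E$, preserving the trace-zero hyperplane $V_0 := \ker(\tr_{E/M})$, a $4$-dimensional faithful $\widetilde H$-representation over $M$.

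Following Buhler \cite[\S 2.3, pp.~46--47]{JLY}, the plan is to produce an $\widetilde H$-equivariant dominant rational map $V_0 \dashrightarrow \widetilde W$ onto an $\widetilde H$-stable subvariety $\widetilde W \subseteq V_0$ of dimension~$2$ such that the quotient $X := \widetilde W / \widetilde H$ is $M$-rational. The strongly versal torsor is then the restriction of the natural $\widetilde H$-torsor $V_0 \to V_0/\widetilde H$ to $X$, with $V_0$ serving as the ambient linear faithful representation certifying strong versality. In Buhler's split case $E = M^5$, the construction uses a Tschirnhaus--Joubert transformation $\alpha \mapsto p_\alpha(\alpha) \in E$, where $p_\alpha$ is a polynomial of degree at most~$4$ whose coefficients are universal rational functions in the traces $\tr(\alpha^j)$, chosen to annihilate $\tr(p_\alpha(\alpha)^j)$ for $j = 1, 2, 3$ and thereby land in a residual $2$-parameter slice. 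Since this transformation is expressed purely through intrinsic $M$-algebra operations on $E$ (sums, products, traces), it descends verbatim to the twisted form $\widetilde H$ and yields the desired $\widetilde H$-equivariant rational map over $M$.

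\emph{Main obstacle.} The principal technical difficulty is verifying that the slice $\widetilde W$, cut out by the trace conditions $\tr(\beta^j) = 0$ for $1 \leq j \leq 3$, is genuinely $M$-rational and not merely stably or retract rational. In the split case, rationality comes from an explicit $2$-parameter parameterization, namely the classical Joubert form of $\fA_5$-quintics; in the twisted case, one must carry out this parameterization $M$-rationally without implicitly splitting $E$. The expected strategy is to exhibit a canonical $M$-rational point of $\widetilde W$ arising from a distinguished operation on $E$ and to parameterize the complete intersection $\widetilde W$ from this point, so that $M$-rationality of $X = \widetilde W/\widetilde H$ follows by descent of Buhler's calculation through the étale algebra $E/M$.
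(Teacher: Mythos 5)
Your reduction to the single-factor case via Corollary~\ref{descent} applied to property~(b) matches the paper exactly (as does the dimension count $2\sum_i[M_i:k]=2t$), and your high-level plan --- identify an $M$-form $\widetilde H$ of $\fA_5$ with a subgroup of the automorphism group of a degree-$5$ \'etale algebra $E/M$, let it act on the trace-zero hyperplane $V_0$, and "twist Buhler's construction" --- is also the strategy the paper pursues, as announced in the introduction.

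However, there is a genuine gap at exactly the point you flag as the ``main obstacle'': the $M$-rationality of the base $X$ of the twisted torsor is asserted but not proved, and the strategy you sketch would not close it. Producing an $M$-rational point on $\widetilde W$ does not yield rationality of $\widetilde W/\widetilde H$, and rationality does not descend along finite \'etale morphisms in general, so ``descent of Buhler's calculation through $E/M$'' is not an argument. The real issue is that the twisted base is a \emph{quadratic twist} of the split-case base, and a quadratic twist of a rational surface is, a priori, only a conic bundle and need not be rational. The paper's Proposition~\ref{easytwistedAlternate} deals with this by an explicit computation which is the actual content of the proof. Working with Brumer's two-parameter generic $\fS_5$-polynomial $f(s,u,x)=x^5+sx^3+u(x+1)$, one builds the tower $Y\to Z\to X$ of an $\fS_5$-torsor over an open of $\mathbf A^2_k$ (via Lemma~\ref{torsorextension}), shows $Y\to X$ is strongly versal (Lemma~\ref{vvnontwisted}), and twists the whole tower by $(a_\sigma)\in\coc^1(M,\fS_5)$ using Lemma~\ref{twistsbgrp}. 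The base of the resulting $\widetilde{\fA_5}$-torsor is then the conic over $\mathbf A^2_M$ with function field $M\bigl(s,u,\sqrt{\alpha\cdot\disc f}\bigr)$, where $\alpha$ is a representative of the image of $(a_\sigma)$ under $\fS_5\to C_2$ (Lemma~\ref{twistc2}). Rationality is finally obtained from the explicit identity
$$\disc(f)=\tfrac{1}{5^5}\bigl((5^5v+P)^2-4(9-20w)Q^2\bigr)s^{10},\quad v=u^2/s^5,\ w=u/s^2,$$
which, after multiplying by $\alpha$, gives $5A^2=\alpha B^2-4\alpha(9-20w)$ for $A=\tfrac{25\sqrt{\alpha\,\disc f}}{s^5Q}$, $B=\tfrac{5^5v+P}{Q}$, and hence $M(s,u,\sqrt{\alpha\,\disc f})=M(A,B)$. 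This concrete rationalization --- invisible from the ``intrinsic operations on $E$'' viewpoint --- is precisely what your proposal leaves unresolved.
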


We start by stating the following application of Theorem \ref{twistedAlternate}:

\begin{cor}\label{corAlt}
Let $k$ be a field of characteristic zero, $X$ a homogeneous space of $\SchSL_{r,k}$ and $\overline{x}\in X(\overline{k})$. Denote by $N$ the stabiliser of $\overline{x}$. If there exists a positive integer $t$ such that $N(\overline{k})\simeq \fA_5^t$, then $X$ is retract rational.
\end{cor}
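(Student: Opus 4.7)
The plan is to reduce the statement to the existence of a rational point on $X$ and then apply \Tref{twistedAlternate} combined with the dictionary in Notation~\ref{varprop}.

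First I would verify that the geometric stabiliser $N(\overline{k}) \simeq \fA_5^t$ is almost complete. Indeed, $\fA_5$ is simple (hence center-free and indecomposable), and the diagram \eqref{sesA5} exhibits $\Aut(\fA_5) = \fS_5 \twoheadrightarrow \out(\fA_5) = C_2$ as the signature map, which has the obvious section given by a transposition; so $\fA_5$ is almost complete. By \Lref{almostcomplete}.(ii), the power $\fA_5^t$ is then almost complete as well. Applying \Pref{acrp}, we conclude that $X(k) \neq \emptyset$.

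Once $X$ has a $k$-rational point $x$, the stabiliser of $x$ becomes a $k$-form $G$ of $\fA_5^t$, and there is an isomorphism of $\SchSL_{r,k}$-homogeneous spaces $X \simeq \SchSL_{r,k}/G$. Now \Tref{twistedAlternate} provides a strongly-versal $G$-torsor $Y \to X'$ whose base $X'$ is $k$-rational, which is precisely property~(b) of \Dref{varprop} for $G$.

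To pass from (b) to retract rationality of $X$, I would invoke the chain of implications (b) $\Rightarrow$ (c) recorded in \Sref{sec:rational-BM} (via Merkurjev's \cite[Proposition~4.2]{MR3645070}). This yields \emph{some} embedding $G \hookrightarrow \SchSL_{n,k}$ for which $\SchSL_{n,k}/G$ is retract $k$-rational. The final step is to note that property (c) is independent of the embedding: by the ``no-name'' lemma (\Lref{noname}), any two such quotients are stably $k$-birational, and retract rationality is a stably $k$-birational invariant. Consequently, our chosen model $\SchSL_{r,k}/G \simeq X$ is retract $k$-rational, as desired. There is no real obstacle in this chain of deductions — the main work is already carried out in \Tref{twistedAlternate}, and the present corollary is essentially an application of the machinery assembled in \Sref{secacs} and \Sref{sec:rational-BM}.
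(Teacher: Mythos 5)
Your argument is correct and follows essentially the same route as the paper: establish that $\fA_5^t$ is almost complete via Lemma~\ref{almostcomplete}.(ii), invoke Proposition~\ref{acrp} to obtain a rational point so that $X\simeq\SchSL_{r,k}/G$ for a $k$-form $G$ of $\fA_5^t$, and then apply Theorem~\ref{twistedAlternate} together with the equivalence of (1) and (3) in \cite[Proposition~4.2]{MR3645070} to conclude retract rationality. Your additional remark that property (c) of Notation~\ref{varprop} is independent of the embedding, via Lemma~\ref{noname}, merely makes explicit a step the paper leaves implicit.
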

\begin{proof}
Since any transposition defines a section of (\ref{sesA5}), the group $\fA_5$ is almost complete by~(i) of Lemma \ref{almostcomplete}. Further, it is indecomposable so that $\fA_5^t$ is also almost complete by~(ii) of Lemma \ref{almostcomplete}. Proposition \ref{acrp} ensures that the variety~$X$ has a rational point, and the choice of such a point supplies a $k$-form $G$ of $\fA_5^t$ and an isomorphism of $\SchSL_{r,k}$-homogeneous spaces $X\simeq\SchSL_{r,k}/G$. Now, Theorem~\ref{twistedAlternate} gives a versal left $G$-torsor $Y\rightarrow X$ where $X$ is $k$-rational. From the equivalence of $(1)$ and $(3)$ in \cite[Proposition 4.2]{MR3645070}, one thus gets that $\SchSL_{r,k}/G$ is retract-rational.
\end{proof}



By Corollary \ref{descent}, applied with $\mathcal{P}_k$ as being property (e) of Notation \ref{varprop}, the proof of Theorem~\ref{twistedAlternate} amounts to the following proposition. Denote the base change of $\fA_5$ to a field~$M$ by $\fA_{5,M}$. 

\begin{prop}\label{easytwistedAlternate}
Let $M/k$ be a finite separable extension of fields and $(a_{\sigma})\in\coc^1(M,\fS_5)$. Let us denote by $\widetilde{\fA_5}$ the $M$-form of $\fA_{5,M}$ associated to the $1$-cocycle $(a_{\sigma})$ via the identification~$\fS_5=\Aut(\fA_5)$. Then there exists a strongly-versal left $\widetilde{\fA_5}$-torsor over an $M$-rational surface.
\end{prop}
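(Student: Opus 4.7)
The plan is to adapt Buhler's construction \cite[\S 2.3, pp.~46--47]{JLY} of a strongly versal $\fA_5$-torsor over a rational surface to the twisted setting, by recasting it $\fS_5$-equivariantly and then twisting by the cocycle $(a_\sigma)$. The underlying principle is that any construction of an $\fA_5$-torsor that is natural in $\fA_5$ with respect to $\Aut(\fA_5) = \fS_5$ automatically inherits a compatible $\fS_5$-action, and twisting by an $\fS_5$-valued cocycle then produces the analogous construction for $\widetilde{\fA_5}$.

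The first step is to realize Buhler's construction in an explicitly $\fS_5$-equivariant form. The aim is to produce a geometrically integral $M$-variety $Y_0$ carrying an action of $\fS_5$, whose restriction to $\fA_5$ is (birationally) Buhler's total space, together with a faithful linear $\fS_5$-representation $V_0$ on an $M$-vector space and an $\fS_5$-equivariant dominant rational map $V_0 \dashrightarrow Y_0$, such that the quotient $Y_0/\fA_5$ is an $M$-rational surface. Concretely, letting $\varphi,\varphi' \colon \fA_5 \hookrightarrow \PGL_2(\overline{M})$ denote the two non-conjugate icosahedral embeddings (which are interchanged by the outer automorphism of $\fA_5$), one may take $Y_0$ birational to $\mathbb{P}^1 \times \mathbb{P}^1$ with diagonal $\fA_5$-action via $(\varphi,\varphi')$; swapping the two factors then extends this to an $\fS_5$-action compatible with the outer action of $\fS_5$ on $\fA_5$, as a direct check confirms. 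Buhler's argument supplies the rationality of $Y_0/\fA_5$ together with the $\fS_5$-equivariant linearisation $V_0 \dashrightarrow Y_0$.

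The second step is to twist everything by $(a_\sigma) \in \coc^1(M,\fS_5)$. Setting $\widetilde{V} := {}_a V_0$, $\widetilde{Y} := {}_a Y_0$ and $\widetilde{X} := {}_a(Y_0/\fA_5) = \widetilde{Y}/\widetilde{\fA_5}$, the functoriality of twisting gives a faithful $M$-linear representation of $\widetilde{\fA_5}$ on $\widetilde{V}$ and a $\widetilde{\fA_5}$-equivariant dominant rational map $\widetilde{V} \dashrightarrow \widetilde{Y}$. After restricting to the open locus $\widetilde{Y}^{\circ}$ of free $\widetilde{\fA_5}$-action, one obtains the desired strongly versal $\widetilde{\fA_5}$-torsor $\widetilde{Y}^{\circ} \to \widetilde{X}^{\circ}$; it then only remains to prove that $\widetilde{X}$ is $M$-rational.

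The main obstacle is precisely this last rationality statement. Since the $\fS_5$-action on the geometrically rational surface $Y_0/\fA_5$ factors through $\out(\fA_5) = \fS_5/\fA_5 \simeq C_2$, the twist $\widetilde{X}$ is classified by the image of $(a_\sigma)$ in $\coh^1(M,C_2)$, which corresponds to a quadratic étale $M$-algebra. One must show that for a suitable explicit model of $Y_0/\fA_5$ (for instance, as a conic bundle over $\mathbb{P}^1/\fA_5 \simeq \mathbb{P}^1$, or as a del Pezzo surface of low degree), every such quadratic twist remains $M$-rational. This is the technical heart of the proof and is where the fine geometric details of Buhler's construction must be carefully exploited.
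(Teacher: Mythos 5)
Your overall skeleton matches the paper's: realise Buhler's strongly versal $\fA_5$-torsor $\fS_5$-equivariantly, twist by $(a_\sigma)$, and then prove that the twisted base $\widetilde X = \widetilde Y/\widetilde{\fA_5}$ is $M$-rational. You also correctly identify that since the $\fS_5$-action on the base factors through $\out(\fA_5)=C_2$, the twist $\widetilde X$ depends only on the image of $(a_\sigma)$ in $\coh^1(M,C_2)$, i.e.\ it is a quadratic twist.

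However, there is a genuine gap at precisely the point you flag as ``the technical heart of the proof'': you assert, but do not prove, that this quadratic twist of the base surface is $M$-rational. This is not a routine verification one can defer. Twisting a $k$-rational surface along a quadratic extension can easily destroy rationality (quadratic twists of $\mathbf P^1$-bundles give nontrivial conic bundles, twists of del Pezzo surfaces can lose rational points), so the claim really has to be checked against a specific model. Moreover, the model you propose — $\mathbf P^1\times\mathbf P^1$ with $\fA_5$ acting diagonally via the two non-conjugate icosahedral embeddings into $\PGL_2$, and the $\fS_5$-action swapping the factors — is not what the paper uses, and it is not clear that its quadratic twist can be shown rational by any soft argument. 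The paper instead takes the explicit $\fS_5$-generic polynomial $f(s,u,x)=x^5+sx^3+u(x+1)$ (Brumer's polynomial, \cite[Prop.~2.3.8]{JLY}), passes to the tower $k(s,u)\subseteq k(s,u,\sqrt{\disc f})\subseteq L$, twists, and then proves directly that for any $\alpha\in M^\times$ the field $M(s,u,\sqrt{\alpha\cdot\disc f})$ is $M$-rational. The key observation is that $\disc(f)$ has odd degree in $u$, so $\alpha\cdot\disc(f)$ is a non-square in $M(s,u)$ whenever $\alpha$ is; the rationality is then exhibited by hand, following the substitutions of \cite[Thm.~2.3.7]{JLY}: setting $v=u^2/s^5$, $w=u/s^2$, $A=25\sqrt{\alpha\cdot\disc f}/(s^5Q)$ and $B=(5^5v+P)/Q$ with $P,Q$ explicit polynomials in $w$, one gets the relation $5A^2=\alpha B^2-4\alpha(9-20w)$, from which $w$, hence $v$, $s$, $u$ and $\sqrt{\alpha\cdot\disc f}$ all lie in $M(A,B)$, so $M(s,u,\sqrt{\alpha\cdot\disc f})=M(A,B)$. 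This explicit computation is the content you would still need to supply, and it is tied to the particular choice of generic polynomial rather than to a general geometric principle.

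A secondary point: when you twist the tower $V\to V/\fA_5\to V/\fS_5$ you should invoke something like Lemma~\ref{twistsbgrp} to guarantee both that the twisted top stage $\widetilde V$ is again an affine space carrying a faithful linear $\widetilde{\fA_5}$-action (so that $\widetilde V\dashrightarrow\widetilde Y$ really certifies strong versality) and that the twisted middle arrow is still an $\widetilde{\fA_5}$-torsor over an open of $\widetilde X$; your phrase ``functoriality of twisting'' hides these verifications, which the paper makes precise.
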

The proof of Proposition \ref{easytwistedAlternate} follows that of \cite[Theorem 2.3.7]{JLY}, and requires the following lemmas. The proof of the first two lemmas may be skipped in a first reading.

The first lemma is a geometric reformulation of \cite[Theorem 1, equivalence of (1) and (4)]{MR1966633}, and is about extending splitting fields of generic polynomials to weakly versal torsors. 

\begin{lem}[{\cite[Theorem 1]{MR1966633}}]\label{torsorextension}
Let $G$ a finite constant group over $k$ and $f(\underline{s},x)\in k(\underline{s})[x]$ a generic polynomial for $G$. Denote by $L$ a splitting field of $f(\underline{s},x)$ over $k(\underline{s})$. Then, the $G$-extension~$L/k(\underline{s})$ extends to a weakly versal left $G$-torsor $Y\rightarrow X$, where $X$ is an open subset of $\mathbf{A}_k^{|\underline{s}|}$.
\end{lem}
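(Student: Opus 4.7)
The plan is to construct $Y \to X$ geometrically from $L/k(\underline{s})$ and invoke the algebraic equivalence of \cite[Theorem~1]{MR1966633} to verify weak versality.

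First, I would build the torsor from the given datum. Let $\mathcal{O}_L$ denote the integral closure of $k[\underline{s}]$ in the Galois extension $L$, set $Y_0 \coloneqq \Spec \mathcal{O}_L$, and observe that $Y_0 \to \mathbf{A}_k^{|\underline{s}|}$ is a finite $G$-equivariant morphism whose generic fiber recovers $\Spec L \to \Spec k(\underline{s})$. Let $X \subseteq \mathbf{A}_k^{|\underline{s}|}$ be the nonempty open locus over which this morphism is finite étale and the induced $G$-action on $Y_0$ is free (the complement of the branch locus union with the image of the $G$-fixed locus). Setting $Y \coloneqq Y_0 \times_{\mathbf{A}_k^{|\underline{s}|}} X$, the morphism $Y \to X$ is a left étale $G$-torsor extending $\Spec L \to \Spec k(\underline{s})$.

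Second, to check weak versality, let $M/k$ be a field extension and $t\co T \to \Spec M$ a left $G$-torsor. When $T$ is connected it corresponds to a Galois extension $E/M$ with group $G$, and the defining property of the generic polynomial $f$ provides $\underline{a} \in M^{|\underline{s}|}$ such that $f(\underline{a}, x)$ is well-defined and its splitting field over $M$ is $E$. Shrinking $X$ if necessary to avoid the poles of the coefficients of $f$ and its discriminant locus, the tuple $\underline{a}$ lies in $X(M)$, and the pullback of $Y \to X$ along $\underline{a}\co \Spec M \to X$ yields a $G$-torsor isomorphic to $t$. For a general (possibly disconnected) $T$, one reduces to the connected case via a specialization argument that trivializes the torsor after a suitable finite extension and descends, which is precisely the implication (1)$\Rightarrow$(4) treated in \cite[Theorem~1]{MR1966633}.

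The main obstacle is in this last step: the generic polynomial definition only guarantees specializations producing $G$-Galois field extensions, whereas weak versality requires realizing all left $G$-torsors over all field extensions of $k$, including those whose associated Galois algebra is a product of several fields corresponding to a proper subgroup of $G$, and matching the $G$-actions (not merely the underlying fields). This passage from connected Galois extensions to arbitrary $G$-torsors is the technical content of the cited equivalence and is invoked as a black box.
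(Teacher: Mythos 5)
Your construction and the paper's are essentially the same object viewed slightly differently: you take $Y_0 = \Spec\mathcal{O}_L$ (normalization of $k[\underline{s}]$ in $L$) and restrict to the \'etale locus, whereas the paper sets $X = \Spec k[\underline{s}, \tfrac{1}{\den(f)\disc(f)}]$, observes that $Z = \Spec(A[x]/f)$ is finite \'etale over $X$, and produces $Y$ as the connected Galois $G$-cover dominating $Z$; over a common open set these agree. The paper's explicit choice of $X$ is preferable to your ``shrinking $X$ if necessary,'' because it shows from the start that any specialization $\underline{a}$ arising from the generic polynomial property automatically lands in $X(M)$ (since $E/M$ being $G$-Galois forces $\disc f(\underline{a})\neq 0$), rather than making $X$ depend on the target torsor.

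There is, however, a genuine gap at the last step of your argument for the connected case. You write that ``the pullback of $Y\to X$ along $\underline{a}$ yields a $G$-torsor isomorphic to $t$,'' but this is the entire point that requires proof: one must check that the $G$-torsor $Y_{\underline{a}}\to\Spec M$ really equals $\Spec E$ with the correct $G$-action. It is not automatic from ``$E$ is a splitting field of $f(\underline{a},x)$''; you must compare the monodromy of $Y\to X$ at the geometric point over $\underline{a}$ with the Galois action of $\Gamma_M$ on the roots of $f(\underline{a},x)$, and verify they agree after identifying the fibre $Z_{\overline{a}}$ with $Z_{\overline{\eta}}$ via a path. This is precisely the content of the paper's commutative diagram of fundamental groups, and it is what lets one identify the kernel of $\Gamma_M\to G$ with $\Gamma_E$ and hence $Y_{\underline{a}}$ with $\Spec E$. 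Your proposal elides this verification.

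On the disconnected case: you are right that the definition of weak versality requires handling arbitrary left $G$-torsors $T\to\Spec M$, not just connected ones, and that the generic polynomial property only speaks to Galois \emph{field} extensions. You flag this and defer to the cited reference. Note that the paper's appendix also only treats the connected case (``let $E/M$ be a $G$-extension of fields'') without explicitly addressing the reduction, so you have identified an issue that the paper's presentation itself leaves implicit; your description of the reduction as ``a specialization argument that trivializes the torsor after a suitable finite extension and descends'' is on the right track but vague, and without a precise argument this remains a black box in both treatments.
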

The proof of Lemma \ref{torsorextension} given by DeMeyer and McKenzie in \cite[Proof of Theorem~1]{MR1966633} is ring-theoretic in flavour.  We supply a more geometric proof in Appendix~\ref{app:dem}.

The next lemma ensures that twisted linear actions remain linear:
\begin{lem}\label{twistsbgrp}
Let $G$ be a finite constant $k$-group, $H$ a normal subgroup of~$G$ and~$\sigma\in \coc^1(k,G)$. Consider a left $G$-variety $Y$ and a $G$-equivariant morphism $Y\rightarrow X$, where~$G$ acts trivially on $X$. Assume further that $Y\rightarrow X$ factors as $Y\rightarrow Z\rightarrow X$, where $Z$ is a left $G/H$-variety, $Y\rightarrow Z$ is $H$-equivariant, and $Z\rightarrow X$ is $G/H$-equivariant with $G/H$ acting trivially on $X$. Then:
\begin{enumerate}
\item The ${}_{\sigma}G$-equivariant morphism ${}_{\sigma}Y\rightarrow X$ factors canonically as ${}_{\sigma}Y\xrightarrow{} {}_{\tau}Z \xrightarrow{} X$, where $\tau$ is the $1$-cocycle given by the composition of $\sigma$ with the quotient morphism $G\rightarrow G/H$. Furthermore,  ${}_{\sigma}Y\rightarrow {}_{\tau}Z$ is $\widetilde{H}$-equivariant, where $\widetilde{H}$ is the $k$-form of~$H$ defined as the image of $\sigma$ by the map $\coc^1(k,G)\rightarrow\coc^1(k,\Aut(H))$ induced by the conjugation morphism $G\rightarrow\Aut(H)$. If moreover $Y\rightarrow X$, $Y\rightarrow Z$ and $Z\rightarrow X$ are respectively a $G$-torsor, an $H$-torsor, and a $G/H$-torsor, then any of the previous twisted maps is also a torsor.
\item Let $V$ be a $k$-vector space on which $G$ acts faithfully on the left, and assume further that the factorisation $Y\rightarrow Z\rightarrow X$ is $V\rightarrow V/H\rightarrow V/G$. In the notation of~(1), if $\sigma\in\coc^1(k,G)$, then~${}_{\sigma}V$ is a $k$-vector space on which ${}_{\sigma}G$ acts faithfully and linearly. In particular, the action of $\widetilde{H}$ on ${}_{\sigma}V$ is also faithful and linear, so that ${}_{\sigma}V\rightarrow_{\sigma}(V/H)$ is a versal $\widetilde{H}$-torsor over an irreducible open subscheme of ${}_{\sigma}(V/H)$.
\end{enumerate} 
\end{lem}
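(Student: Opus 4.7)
The lemma is a formal consequence of the functoriality of the contracted product construction $\sigma \times_k^G (-)$ and Galois descent; there is no deep obstacle, the main challenge being careful bookkeeping of which group acts on which twisted variety and recording the canonical identifications between successive contracted products along quotient maps of groups. My approach would unwind the various twists over $\oline k$ and invoke standard compatibilities.

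For part (1), I would first observe that the contracted product is a functor from left $G$-varieties to left ${}_\sigma G$-varieties, so applying it to the $G$-equivariant map $Y \to Z$ (where $G$ acts on $Z$ through the projection $G \to G/H$) yields a ${}_\sigma G$-equivariant map ${}_\sigma Y \to {}_\sigma Z$. The key identification is that ${}_\sigma Z$ and ${}_\tau Z$ represent the same quotient scheme $(\sigma \times_k Z)/G$ when $H$ acts trivially on $Z$, with compatible $({}_\sigma G)$- and $({}_\tau(G/H))$-actions; composing with ${}_\tau(Z \to X) = ({}_\tau Z \to X)$ (using that $G/H$ acts trivially on $X$, so the twist leaves it unchanged) gives the desired factorization. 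The $\widetilde{H}$-equivariance of ${}_\sigma Y \to {}_\tau Z$ follows from the short exact sequence
\[1 \to \widetilde{H} \to {}_\sigma G \to {}_\tau(G/H) \to 1\]
together with the fact that the ${}_\sigma G$-action on ${}_\tau Z$ factors through the quotient ${}_\tau(G/H)$. The torsor statements may be verified after passing to an \'etale cover on which $\sigma$ becomes trivial, reducing them to the original torsor assumptions on $Y \to Z$, $Y \to X$, and $Z \to X$.

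For part (2), I would use Galois descent. Choosing a $\oline k$-trivialisation of $\sigma$, we obtain a canonical identification ${}_\sigma V \otimes_k \oline k \simeq V \otimes_k \oline k$ as $\oline k$-vector spaces, but with the descent datum modified so that $\gamma \in \Gamma_k$ acts as $v \mapsto \sigma_\gamma(\gamma_V v)$. Since $G$ acts $k$-linearly on $V$, each $\sigma_\gamma \in G(\oline k)$ acts $\oline k$-linearly on $V \otimes_k \oline k$, so the twisted Galois action is still $\oline k$-semilinear in $\gamma$; hence its fixed points form a $k$-vector space ${}_\sigma V$ of dimension $\dim_k V$. The same descent argument shows that the ${}_\sigma G$-action on ${}_\sigma V$ is $k$-linear, and faithfulness is inherited from the faithful action after base change to $\oline k$. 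By restriction, $\widetilde{H}$ also acts faithfully and $k$-linearly on ${}_\sigma V$. For the final versal torsor assertion, let $U \subseteq V/H$ be the nonempty open subscheme on which $H$ acts freely; applying part (1) to the $H$-torsor $V \times_{V/H} U \to U$ yields an $\widetilde{H}$-torsor ${}_\sigma V \times_{{}_\sigma(V/H)} {}_\tau U \to {}_\tau U$ over the irreducible open ${}_\tau U \subseteq {}_\sigma(V/H) = {}_\tau(V/H)$, and this torsor is (in fact strongly) versal by the classical principle that the quotient of a faithful $k$-linear representation of a finite group by its free locus is a strongly versal torsor.
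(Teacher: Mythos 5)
Your proposal is correct and follows essentially the same route as the paper's (very terse) proof: both parts rest on the contracted-product/Galois-descent formalism, identifying ${}_\sigma V$ via its class in $\coh^1(k,\SchGL(V))$ (equivalently, your explicit modified descent datum) and twisting the action morphism. Your write-up is more explicit than the paper — in particular, you spell out the identification ${}_\sigma Z \simeq {}_\tau Z$, the short exact sequence $1\to\widetilde{H}\to{}_\sigma G\to{}_\tau(G/H)\to1$, and the étale-local verification of the torsor claims, all of which the paper leaves implicit — but the underlying ideas coincide.
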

\begin{proof}
For the proof of (1), the canonical factorisation of ${}_{\sigma}Y\rightarrow X$ comes from the very definition of the contracted product. Since $\widetilde{H}$ is a subgroup of ${}_{\sigma}G$, it acts on $_{\sigma}Y$ and the map ${}_{\sigma}Y(\overline{k})\rightarrow X(\overline{k})$ is $\widetilde{H}(\overline{k})$-equivariant compatibly with the action of $\Gamma_k$.

Let us now prove (2). First note that the twisted form ${}_{\sigma}V$ of the $k$-vector space $V$ corresponds to a class in $\coh^1(k,\SchGL(V))$. Since the latter classifies $k$-vector spaces which are geometrically isomorphic to $V$, it ensures that ${}_{\sigma}V$ is naturally endowed with a $k$-vector space structure. Furthermore, after twisting the action morphism $a:G\times_kV\rightarrow V$, one gets an action $b:{_{\sigma}G}\times_{k}{_{\sigma}V}\rightarrow  {}_{\sigma}V$. Geometrically, $b$ corresponds to a morphism ${}_{\sigma}G(\overline{k})\rightarrow\SchGL({}_{\sigma}V)(\overline{k})$ which is compatible with the $\Gamma_k$-action. It thus  descends to a morphism ${}_{\sigma}G\rightarrow\SchGL({}_{\sigma}V)$ which defines $b$. This proves that ${}_{\sigma}G$ acts linearly on ${}_{\sigma}V$. The last part of the statement is inferred by $\widetilde{H}$ being a subgroup of ${}_{\sigma}G$. 
\end{proof}


The proof of Proposition \ref{easytwistedAlternate} uses the following setting. By
 \cite[\S2.2, Proposition~2.3.8]{JLY}, the polynomial
 $$f(s,u,x)=x^5+sx^3+u(x+1)\in k(s,u)[x],$$
 where $s,u$ and $x$ are indeterminates, is a generic polynomial of $\fS_5$ over $\mathbf{Q}$, hence over $k$. Denote by $L$ a splitting field of $f$ over $k(s,u)$. The genericity of $f$ ensures that $L/k(s,u)$ is an $\fS_5$-extension. We thus have the following tower of field extensions $$k(s,u)\subseteq k(s,u,\sqrt{\disc f}) \subseteq L$$
where $L$ is an $\fA_5$-extension of $k(s,u,\sqrt{\disc f})$. By Lemma \ref{torsorextension}, this tower of fields can be extended to morphisms of schemes $$Y\rightarrow Z\rightarrow X$$ where $Y\rightarrow X$ is a weakly versal left $\fS_5$-torsor, $Z=Y/\fA_5$ and $X$ is an open subset of $\mathbf{A}^2_k$.

The following lemma first ensures that $Y\rightarrow X$ is a strongly-versal $\fS_5$-torsor.

\begin{lem}\label{vvnontwisted}
For any vector space $V$ endowed with a faithful left linear action of $\fS_5$, there exists an $\fS_5$-equivariant dominant rational map $V\dashrightarrow Y$.
\end{lem}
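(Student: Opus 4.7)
The plan is to upgrade the weakly versal $\fS_{5}$-torsor $Y\rightarrow X$ constructed from the generic polynomial $f(s,u,x)$ into a strongly versal one by producing, for any faithful linear $\fS_{5}$-representation $V$, a dominant $\fS_{5}$-equivariant rational map $V\dashrightarrow Y$.

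First, restrict to the open locus $V^{\circ}\subseteq V$ on which $\fS_{5}$ acts freely, so that $V^{\circ}\rightarrow V^{\circ}/\fS_{5}$ is an $\fS_{5}$-torsor whose generic fiber is an $\fS_{5}$-Galois extension $k(V)/K$ where $K=k(V)^{\fS_{5}}$. Genericity of $f$ applied over the field $K$ yields elements $s_{0},u_{0}\in K$ such that the splitting field of $f(s_{0},u_{0},x)$ over $K$ is isomorphic to $k(V)/K$ as an $\fS_{5}$-extension. The pair $(s_{0},u_{0})$ defines a $K$-point of $X\subseteq\mathbf{A}^{2}_{k}$, hence a rational map $\alpha:V/\fS_{5}\dashrightarrow X$, and the pullback $\alpha^{*}Y$ is isomorphic, as an $\fS_{5}$-torsor over $\mathrm{Spec}\,K$, to the generic fiber of $V^{\circ}\rightarrow V^{\circ}/\fS_{5}$. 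Spreading out this isomorphism over an open neighborhood of $\mathrm{Spec}\,K$ in $V/\fS_{5}$ produces the desired $\fS_{5}$-equivariant rational map $V\dashrightarrow Y$ lying over $\alpha$.

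The main obstacle is dominance of $V\dashrightarrow Y$, which is equivalent to the algebraic independence of $s_{0},u_{0}$ in $K$ over $k$. I will establish this via an essential dimension argument. Let $F=k(s_{0},u_{0})\subseteq K$ and let $r_{1},\ldots,r_{5}\in k(V)$ denote the roots of $f(s_{0},u_{0},x)$. The extension $F(r_{1},\ldots,r_{5})/F$ is Galois, and since its base change to $K$ equals the $\fS_{5}$-extension $k(V)/K$ while Galois groups only shrink under base change, the Galois group of $F(r_{1},\ldots,r_{5})/F$ must already be all of $\fS_{5}$. This furnishes a descent of the generic $\fS_{5}$-extension $k(V)/K$ to a field $F$ of transcendence degree at most $2$ over $k$. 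Since the essential dimension of $\fS_{5}$ is $2$ in characteristic zero by Buhler--Reichstein, and this value is attained by the generic torsor, no such descent with $\mathrm{trdeg}_{k}F<2$ is possible. Hence $s_{0},u_{0}$ are algebraically independent, $\alpha$ is dominant, and so is $V\dashrightarrow Y$.
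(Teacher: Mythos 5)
Your proof is correct, and it replaces the paper's key dominance argument by a genuinely different one. Both proofs begin identically: restrict to the free locus, use weak versality of $Y\to X$ to produce a cartesian square giving an $\fS_5$-equivariant rational map $V\dashrightarrow Y$ over a rational map $\alpha:V/\fS_5\dashrightarrow X$, and reduce to showing $\alpha$ is dominant (equivalently, $s_0,u_0\in K$ are algebraically independent over $k$). Where the paper then argues by cases on the dimension of the closure $B$ of the image of $\alpha$ --- ruling out $\dim B=0$ via versality of $V^\circ\to V^\circ/\fS_5$, and $\dim B=1$ by showing $Y\times_X B$ would be a rational curve (L\"uroth) so $\fS_5$ would embed in $\PGL_2(k(B))$, impossible in characteristic zero --- you instead package the whole dominance claim into a single essential-dimension bound: the generic fiber of $V^\circ\to V^\circ/\fS_5$ is a versal torsor, hence its essential dimension equals $\mathrm{ed}(\fS_5)=2$ (Buhler--Reichstein), and $\alpha$ exhibits a descent of that torsor to $F=k(s_0,u_0)$, forcing $\mathrm{trdeg}_k F=2$. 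The descent is valid: since $K(r_1,\dots,r_5)=k(V)$ has degree $120$ over $K$, one gets $F(r_1,\dots,r_5)\cap K=F$, so $F(r_1,\dots,r_5)$ and $K$ are linearly disjoint and $F(r_1,\dots,r_5)\otimes_F K\cong k(V)$ as $\fS_5$-algebras. Your route is cleaner and more uniform --- it would work verbatim for any group whose essential dimension matches the dimension of the base of a generic-polynomial torsor --- at the cost of invoking the nontrivial lower bound $\mathrm{ed}(\fS_5)\ge 2$ as a black box. The paper's more hands-on argument is essentially a geometric re-derivation, in this specific setting, of that same lower bound (the one-parameter case), relying instead on the classification of finite subgroups of $\PGL_2$.
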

\begin{proof}
Let $t$ be the map $V\rightarrow V/\fS_5$. There exist an integral open subset $U$ of $V/\fS_5$ such that, after setting $W=t^{-1}(U)$, the restriction of $t$ to $W$ is an $\fS_5$-torsor. Since $Y\rightarrow X$ is weakly versal, one may assume, after shrinking $U$, that there exist $\fS_5$-equivariant maps $r:W\rightarrow Y$ and $b:U\rightarrow X$ such that the following diagram is cartesian:
\begin{center}
\begin{tikzcd}
W \arrow[r, "r"]\arrow[d] & Y \arrow[d] \\
U \arrow[r, "b"] & X
\end{tikzcd}.
\end{center}
It remains to show that the map $b$ is dominant, which then implies that so is $r$. Indeed, otherwise  the closure $B$ of the image of $b$ is of dimension $0$ or $1$. 
If it were of dimension~$0$, then $b$ is constant. Since $U(k)\neq\emptyset$, this would mean that $B$ is a rational point of $X$, so that the $\fS_5$-torsor $Y_B\rightarrow B$ corresponds to a Galois extension of fields with group $\fS_5$. Furthermore, the versality of the $\fS_5$-torsor $W\rightarrow U$ would imply that $Y_B\rightarrow B$ is versal so that the trivial $\fS_5$-torsor over $k$ is a pullback of $Y_B\rightarrow B$. In particular, $Y_B$ consists in a $k$-point, hence $Y_B\rightarrow B$ is the identity morphism of $\Spec(k)$, which can not be.
Furthermore, if $B$ were of dimension $1$, then the curve~$C\coloneqq Y\times_XB$ would be dominated by $W$. It would thus be unirational, hence rational by the combination of \cite[Lemma 2.3]{MR1956057} and Lüroth's theorem \cite[\S4.6, Theorem~6.8]{MR1140705}. But $C\rightarrow B$ being an $\fS_5$-torsor, this supplies an embedding $$\fS_5\xhookrightarrow{} \Aut(C/B)=\Aut(k(C)/k(B))=\PGL_2(k(B)).$$ Nevertheless, as $k(B)$ is of characteristic $0$, there is no such embedding by \cite[Introduction]{MR2681719}\footnote{See also \cite[Corollary 1.3]{DKLN} for an explicit list of $1$-parameter generic polynomials over fields of characteristic $0$.}. From this, one eventually deduces that $B=X$, so that $b$ is dominant.
\end{proof}
We can now give a proof of Proposition \ref{easytwistedAlternate}:
\begin{proof}[Proof of Proposition \ref{easytwistedAlternate}]
Let $M/k$ be a finite extension, $(a_{\sigma})\in\coc^1(M,\fS_5)$ and $\widetilde{\fA_5}$ the $M$-form of $\fA_{5,M}$ associated to the $1$-cocycle $(a_{\sigma})$. Let $V$ be an $M$-vector space on which $\fS_5$ acts faithfully on the left. Now, Lemma \ref{vvnontwisted} supplies an $\fS_5$-equivariant dominant rational map~$V\dashrightarrow Y$. This yields a commutative diagram of rational maps:
\begin{center}
\begin{tikzcd}
V \arrow[r,dashed,"\alpha"]\arrow[d,"\fA_5",swap] & Y\arrow[d, "\fA_5",swap] \\
V/\fA_5 \arrow[r,dashed,"\beta"]\arrow[d,"C_2",swap] & Z \arrow[d,"C_2",swap]\\
V/\fS_5 \arrow[r,dashed] & X
\end{tikzcd}
\end{center}
where every horizontal map is dominant, $\alpha$ (resp. $\beta$) being $\fS_5$-equivariant (resp. $\fA_5$-equivariant) and each vertical map is a left torsor under the constant $M$-group written on its left. Using Lemma \ref{twistsbgrp}, one may twist such a diagram via the cocycle $(a_{\sigma})$, which ensures a commutative diagram of rational maps
\begin{equation}\label{vvtwisted}
\begin{tikzcd}
\widetilde{V} \arrow[r, dashed, "\widetilde{\alpha}"]\arrow[d,"\widetilde{\fA_5}",swap] & \widetilde{Y} \arrow[d,"\widetilde{\fA_5}",swap] \\
\widetilde{V}/\widetilde{\fA_5} \arrow[r, dashed, "\widetilde{\beta}"]\arrow[d,"C_2",swap] & \widetilde{Y}/\widetilde{\fA_5}\arrow[d,"C_2",swap]\\
V/\fS_5 \arrow[r, dashed] & X
\end{tikzcd}
\end{equation}
where each vertical map is a left torsor under the $M$-group written on its left. Moreover, any horizontal map of (\ref{vvtwisted}) is still dominant and $\widetilde{\alpha}$ (resp. $\widetilde{\beta}$) is $\widetilde{\fS_5}$-equivariant (resp. $C_2$-equivariant). Now, part (2) of Lemma \ref{twistsbgrp} ensures that $\widetilde{V}$ is an $M$-vector space on which~$\widetilde{\fA_5}$ acts faithfully and linearly. Thus, the dominance of $\widetilde{\alpha}$ yields the strong versality of the $\widetilde{\fA_5}$-torsor $\widetilde{Y}\rightarrow\widetilde{Y}/\widetilde{\fA_5}$.

To see that $\widetilde{Z}\coloneqq\widetilde{Y}/\widetilde{\fA_5}$ is $M$-rational, note that $\widetilde{Z}$ is none other than the twisted form of the $C_2$-torsor $Z\rightarrow X$ via the image $(\overline{a}_{\sigma}) \in \coc^1(M,C_2)$ of the $1$-cocycle $(a_{\sigma})\in \coc^1(M,\fS_5)$ induced by the quotient map $\fS_5\rightarrow\fS_5/\fA_5=C_2$. The $1$-cocycle $(\overline{a}_{\sigma})$ then corresponds to an extension $M(\sqrt{\alpha})/M$  for some $\alpha\in M$. By the very definition of twisted forms, the function field of $\widetilde{Z}$ is then the fixed subring of $M(s,u,\sqrt{\disc f})\otimes_{M(s,u)}M(s,u,\sqrt{\alpha})$ under the action of~$C_2$. If $\sqrt{\alpha}\in M$ the action of $C_2$ is trivial, so that
\begin{equation}\label{ff1}
    M(\widetilde{Z})=M(s,u,\sqrt{\disc f})\otimes_{M(s,u)}M(s,u,\sqrt{\alpha})=M(s,u,\sqrt{\disc f}).
\end{equation}
Let us now compute $M(\widetilde{Z})$ when $\sqrt{\alpha}\not\in M$. For this purpose, using the proof of \cite[Theorem 2.3.7]{JLY}, we have
$$\disc(f)=(108s^5 + 16s^4 u - 900s^3 u - 128s^2u^2 + 2000su^2 + 3125u^2 + 256u^3 )u^2$$
from which we deduce that $\disc(f)$ has odd degree. In particular, $\alpha.\disc(f)\not\in (M(s,u)^{\times})^2$ so that
\begin{equation}\label{ff2}
    M(\widetilde{Z})=\left(M(s,u,\sqrt{\disc f})\otimes_{M(s,u)}M(s,u,\sqrt{\alpha})\right)^{C_2}=M\left(s,u,\sqrt{\alpha.\disc(f)}\right)
\end{equation}
by the following simple lemma:
\begin{lem}\label{twistc2}
If $k(\sqrt{\alpha})/k$ (resp. $k(\sqrt{\beta})/k$) is a quadratic extension on which $C_2$ acts by mapping $\sqrt{\alpha}$ (resp.~$\sqrt{\beta}$) to $-\sqrt{\alpha}$ (resp.~$-\sqrt{\beta}$), then under the diagonal action of $C_2$ on~$k(\sqrt{\alpha})\otimes_kk(\sqrt{\beta})$, the $k$-algebra~$\left(k(\sqrt{\alpha})\otimes_kk(\sqrt{\beta})\right)^{C_2}$ of fixed elements may be described as follows:
$$\left(k(\sqrt{\alpha})\otimes_kk(\sqrt{\beta})\right)^{C_2}=
\begin{cases}  
k(\sqrt{\alpha\beta}) & \text{if } \alpha\beta\not\in (k^{\times})^2 \\
k\times k & \text{otherwise.}
\end{cases}   
$$
\end{lem}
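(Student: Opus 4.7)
The plan is to divide the analysis into the two cases based on whether $\alpha\beta$ is a square in $k$, since these correspond to the two possible $k$-algebra structures of the tensor product $k(\sqrt{\alpha}) \otimes_k k(\sqrt{\beta})$.

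In the case $\alpha\beta \notin (k^{\times})^2$, I would first observe that $k(\sqrt{\alpha}) \otimes_k k(\sqrt{\beta})$ is a field, namely the biquadratic extension $k(\sqrt{\alpha}, \sqrt{\beta})$. Indeed, the tensor product has dimension $4$ over $k$, and the hypothesis $\alpha\beta \notin (k^\times)^2$ is precisely the condition that $\sqrt{\beta} \notin k(\sqrt{\alpha})$, so the minimal polynomial $X^2 - \beta$ remains irreducible over $k(\sqrt{\alpha})$. The element $\sqrt{\alpha} \otimes \sqrt{\beta}$ is manifestly fixed by the diagonal $C_2$-action, and its square equals $\alpha\beta$. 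Since the diagonal action is nontrivial, the fixed subfield has index $2$ in the tensor product, hence dimension $2$ over $k$; it contains $k(\sqrt{\alpha\beta})$, which has the same dimension, so they coincide.

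In the case $\alpha\beta = d^2$ for some $d \in k^{\times}$, I would rewrite $\sqrt{\beta}$ as $d/\sqrt{\alpha}$, so that $k(\sqrt{\beta}) = k(\sqrt{\alpha}) \eqqcolon L$ and the map $\sqrt{\beta} \mapsto -\sqrt{\beta}$ becomes the nontrivial element $\sigma$ of $\Gal(L/k)$ applied to $\sqrt{\alpha}$. The task then reduces to computing $(L \otimes_k L)^{\sigma \otimes \sigma}$. Writing everything in the basis $\{1 \otimes 1,\ \sqrt{\alpha} \otimes 1,\ 1 \otimes \sqrt{\alpha},\ \sqrt{\alpha} \otimes \sqrt{\alpha}\}$, the middle two basis elements are negated while the outer two are fixed. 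Thus the fixed subring is $k\text{-spanned}$ by $1 \otimes 1$ and $\sqrt{\alpha} \otimes \sqrt{\alpha}$, and since $(\sqrt{\alpha} \otimes \sqrt{\alpha})^2 = \alpha^2$, it is isomorphic to $k[X]/(X^2 - \alpha^2)$, which splits as $k \times k$ by the Chinese Remainder Theorem (the factors $X - \alpha$ and $X + \alpha$ being coprime as $\alpha \neq 0$).

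There is no real obstacle: the argument is entirely elementary linear algebra over $k$ combined with a standard fact about tensor products of quadratic extensions. The only minor subtlety is the choice of the identification $\sqrt{\beta} = d/\sqrt{\alpha}$ in the second case, but the opposite choice merely replaces $d$ by $-d$ and leaves the conclusion unchanged.
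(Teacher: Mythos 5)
Your proof is correct and follows essentially the same route as the paper: the first case (identifying $k(\sqrt{\alpha\beta})$ as a two-dimensional fixed subalgebra) is identical, and the second differs only in presentation, since the paper writes $k(\sqrt{\alpha})\otimes_k k(\sqrt{\beta})\cong k(\sqrt{\alpha})\times k(\sqrt{\alpha})$ up front and observes the diagonal action has fixed points $k\times k$, whereas you compute the fixed subspace in a basis and recover the same splitting via the Chinese Remainder Theorem.
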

The simple proof of the lemma is postponed to the end of the section. Thus, by comparing~(\ref{ff1}) and (\ref{ff2}), unconditionally on $\alpha\in M$ we have that:
$$M(\widetilde{Z})=M\left(s,u,\sqrt{\alpha.\disc(f)}\right).$$

It is thus enough to show that $M\left(s,u,\sqrt{\alpha.\disc(f)}\right)$ is $M$-rational. Indeed, following the proof of \cite[Theorem 2.3.7]{JLY}, one has:
$$\begin{array}{rl} 
\disc(f) & = (108s^5 + 16s^4u - 900s^3u - 128s^2u^2 + 2000su^2 + 256u^3 + 3125u^2)u^2 \\
& = \frac{1}{5^5}\left((5^5v+P)^2-4(9-20w)Q^2\right)s^{10}
\end{array}$$
where $P=1000w^2-450w+54$, $Q=(9-20w)(1-5w)$, $v=u^2/s^5$ and $w=u/s^2$. Hence, 
$$5\left(\frac{25\sqrt{\disc(f)}}{s^5Q}\right)^2=\left(\frac{5^5v+P}{Q}\right)^2-4(9-20w)$$
which, after multiplying by $\alpha$, leads to
$$5\left(\frac{25\sqrt{\alpha.\disc(f)}}{s^5Q}\right)^2=\alpha\left(\frac{5^5v+P}{Q}\right)^2-4\alpha(9-20w).$$
Thus, after setting $A \coloneqq \frac{25\sqrt{\alpha.\disc(f)}}{s^5Q}$ and $B\coloneqq \frac{5^5v+P}{Q}$, one gets
\begin{equation}\label{equABw}
5 A^2 = \alpha B^2 - 4\alpha(9-20w).
\end{equation}
The definition of $A$ and $B$ ensures that $M(s,u,\sqrt{\alpha.\disc(f)}) \supseteq M(A,B)$. Furthermore, the reverse inclusion holds since $w\in M(A,B)$ by (\ref{equABw}), and hence $v\in M(A,B)$ by the definition of $B$. 
But then $s = w^2/v$ and~$u = s^3v/w$ are in $M(A,B)$. 
In particular $\sqrt{\alpha.\disc(f)}\in M(A,B)$ by the very definition of $A$, and hence $M(s,u,\sqrt{\alpha.\disc(f)}) = M(A,B)$ is $M$-rational.
\end{proof}
\begin{proof}[Proof of Lemma \ref{twistc2}]
If $\alpha\beta\not\in (k^{\times})^2$, then $k(\sqrt{\alpha\beta})$ is a $k$-subalgebra of dimension $2$ of $k(\sqrt{\alpha})\otimes_kk(\sqrt{\beta})$ fixed by $C_2$. Since the $k$-algebra $\left(k(\sqrt{\alpha})\otimes_kk(\sqrt{\beta})\right)^{C_2}$ is of dimension $2$, this proves the statement in that case.

Now, if $\alpha\beta\in (k^{\times})^2$, then $k(\sqrt{\beta})=k(\sqrt{\alpha})$, from which one infers that $k(\sqrt{\alpha})\otimes_kk(\sqrt{\beta})=k(\sqrt{\alpha})\times k(\sqrt{\alpha})$ 
on which $C_2$ acts diagonally, so that $\left(k(\sqrt{\alpha})\otimes_kk(\sqrt{\beta})\right)^{C_2}=k\times k$.
\end{proof}

\section{Application to Grunwald problems for some non-solvable groups}

When $k$ is a number field, let us recall that the (BM) property for finite $k$-groups is a widely open question. In Section \ref{devissage}, we provide a criterion for an extension of finite $k$-groups to verify (BM). Afterwards, in Section \ref{nonsolvable}, we combine the results of previous sections with Theorem~\ref{semidirectBM} to infer a proof of Theorem \ref{bmgroups}, which supplies new families of non-solvable groups verifying~(BM). Eventually, Section \ref{secBMnonsoluble} is dedicated to  reviewing the non-solvable groups with cardinality at most $500$ for which (BM) holds.

\subsection{Extensions of groups verifying (BM)}\label{devissage}

The proof of the criterion is similar to the proof of \cite[Théorème 1]{Harari}, where Harari proves the case of a split exact sequence with abelian kernel. Our statement extends the latter:

\begin{thm}\label{semidirectBM}
Let $k$ be a number field and consider a short exact sequence of finite algebraic $k$-groups:
\begin{equation}\tag{$\mathcal{S}$}\label{sessemidirBM}
\begin{tikzcd}1\arrow[r] & N\arrow[r] & E\arrow[r] & Q\arrow[r] & 1.\end{tikzcd}
\end{equation}
Assume (BM) holds for  $Q$ and for every homogeneous space $X$ of $\SchSL_{r,k}$ admitting a point $\overline{x}\in X(\overline{k})$ whose stabiliser $S$ satisfies $S(\overline{k})\simeq N(\overline{k})$.  
Assume further that one of the following conditions is satisfied:
\begin{enumerate}[label=(\roman*)]
    \item for every field extension $K/k$, assume that $X(K)\neq\emptyset$ for every homogeneous space $X$ of $\SchSL_{r,K}$ admitting a point $\overline{x}\in X(\overline{K})$ whose stabiliser $S$ satisfies $S(\overline{K})\simeq N(\overline{K})$;
    \item the exact sequence (\ref{sessemidirBM}) is split.
\end{enumerate}
Then $E$ verifies~(BM).
\end{thm}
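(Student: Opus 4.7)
The plan is to invoke Harari's fibration method \cite[Théorème~3]{Harari}, applied to a fibration built from the extension, thereby generalising Harari's original argument \cite[Théorème~1]{Harari} which treated the abelian split case.

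I would first treat case (ii), the split case $E \simeq N \rtimes Q$. Pick faithful linear representations $V$ of $E$ and $W$ of $Q$, and let $E$ act on $V \times W$ via its action on $V$ and through $E \twoheadrightarrow Q$ on $W$. The second projection is $E$-equivariant and descends to a dominant rational morphism
\[
\varphi : X := (V \times W)/E \to Y := W/Q.
\]
A short orbit computation shows that over a $w \in W$ with trivial $Q$-stabiliser the fiber is $V/N$; consequently the generic fiber of $\varphi$ is a homogeneous space of $\SchSL$ over $k(Y)$ whose geometric stabiliser is isomorphic to $N(\overline{k})$. Moreover, the splitting $Q \hookrightarrow E$ provides a rational section $Y \to X$ of $\varphi$, supplying rational points on the fibers.

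By Lemma~\ref{noname}, $Y$ is stably $k$-birational to $\SchSL_{m,k}/Q$ and hence verifies (BM) by the hypothesis on $Q$. The fibers of $\varphi$ over closed points $y \in Y$ are $N$-type homogeneous spaces over the number field $\kappa(y)$, and (BM) for them is a consequence of the hypothesis on $N$ combined with the stably-birational invariance of (BM). Harari's fibration method then yields (BM) for $X$, hence for $E$ by Lemma~\ref{noname}. In case (i), the same construction $(V \times W)/E \to W/Q$ still makes sense without a splitting (since $E/N = Q$ canonically acts on $V/N$), and the rational-points hypothesis plays the role of the section coming from the splitting, guaranteeing rational points on each fiber defined over a field.

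The main obstacle will be verifying the vertical-Brauer-group hypothesis of \cite[Théorème~3]{Harari}: controlling $\Brun$ of a smooth compactification of $X$ relative to $\varphi$ and ruling out vertical obstructions beyond those already captured on $Y$ and on the generic fiber. A subsidiary difficulty is arranging the local hypotheses of the fibration theorem at sufficiently many closed points $y \in Y$, which requires a careful choice of compactification and a Hochschild--Serre type analysis combining the local conditions on $Y$ (where one inputs (BM) for $Q$) with the local conditions on the fibers (where one inputs (BM) for $N$-type homogeneous spaces).
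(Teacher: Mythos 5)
Your construction is essentially the paper's: the paper uses $(\SchSL_m\times\SchSL_n)/E\to\SchSL_n/Q$ (with the embedding of $Q$ into $\SchSL_n$ factoring through the section when the sequence splits), while you use $(V\times W)/E\to W/Q$; by the no-name lemma these are stably birational, so both realise the same fibration with base stably birational to $\SchSL_n/Q$ and fibres (over rational points) stably birational to $N$-type homogeneous spaces. The paper then verifies exactly three conditions for \cite[Théorème~3]{Harari}: (a) the base verifies (BM), (b) fibres over rational $k$-points verify (BM), (c) the generic fibre is unirational. Your final paragraph, worrying about a ``vertical-Brauer-group hypothesis'' and ``a Hochschild--Serre type analysis,'' is a misplaced concern: there is no separate vertical-Brauer condition to check in the version of the fibration method that the paper invokes, and the paper applies it cleanly once (a)--(c) hold. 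The only genuine step you leave implicit is passing from a rational point on the generic fibre to unirationality of the generic fibre (condition (c)): this is done in the paper by choosing a rational point of $f^{-1}(\eta)$ with stabiliser $S$, which induces an isomorphism $\SchSL_m/S\xrightarrow{\sim}f^{-1}(\eta)$ and hence a dominant map from $\SchSL_m$. You should also restrict your statement about fibres over closed points to fibres over $k$-rational points, since that is what the hypothesis on homogeneous spaces with stabiliser $N(\overline{k})$ is about and what the fibration theorem actually requires.
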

\begin{proof}
%
First choose an embedding of $E$ (resp. $Q$) in $\SchSL_m$ (resp. $\SchSL_n$) and if (\ref{sessemidirBM}) splits, fix a section $\sigma:Q\rightarrow E$ of (\ref{sessemidirBM}) and replace the embedding of $Q$ by the embedding:
$Q\xhookrightarrow{\sigma}E\xhookrightarrow{}\SchSL_n$ to assume $m=n$ in this case. Consider the projection morphism $p:\SchSL_m\times\SchSL_n\rightarrow\SchSL_n$. The algebraic group $E$ acts on the right on~$\SchSL_n$ via $Q$, which induces a diagonal action of $E$ on the right on $\SchSL_m\times\SchSL_n$. Then, $p$ is a (right) $E$-equivariant morphism, so that it induces a morphism
$$f:(\SchSL_m\times\SchSL_n)/E\rightarrow \SchSL_n/E=\SchSL_n/Q.$$
Let us apply the fibration method \cite[Théorème 3]{Harari} to $f$. For this purpose, we verify that $f$ satisfies the following conditions:
\begin{enumerate}[label=(\alph*)]
    \item the base variety $\SchSL_n/Q$ verifies (BM);
    \item any fibre of $f$ over a rational point verifies (BM);
    \item the generic fibre of $f$ is unirational.
\end{enumerate}
First note that (a) holds by assumption. 
For (b), fix a point $x$ of $\SchSL_n/Q$, denote by $\kappa$ its residue field, and let us describe $X\coloneqq f^{-1}(x)$. For this purpose, first note that $\SchSL_m$ acts on the left on $(\SchSL_m\times\SchSL_n)/E$ with~$h\in \SchSL_m$ acting on the class of $(a,b)\in\SchSL_m\times\SchSL_n$ by $h.(a,b)\coloneqq (ha,b)$. Moreover, when letting~$\SchSL_m$ act trivially on $\SchSL_n/Q$, the morphism $f$ is $\SchSL_m$-equivariant. In particular, this induces a left action of $\SchSL_{m,\kappa}$ on $X$. Now, let $\overline{\kappa}$ be an algebraic closure of $\kappa$ and choose~$s\in\SchSL_n(\overline{\kappa})$ a representative of the class $x\in \SchSL_n(\overline{\kappa})/Q(\overline{\kappa})$. Then $X(\overline{\kappa})$ is the subset of $(\SchSL_m(\overline{\kappa})\times\SchSL_n(\overline{\kappa}))/E(\overline{\kappa})$ consisting of classes of elements of the form $(g,s)$ with $g\in\SchSL_m(\overline{\kappa})$. Since $h\in\SchSL_m(\overline{\kappa})$ maps the class of $(g,s)$ to that of $(hg,s)$, the action of~$\SchSL_{m}(\overline{\kappa})$ on $X(\overline{\kappa})$ is transitive, so that $X$ is a homogeneous space of $\SchSL_{m,\kappa}$.

Besides, the stabiliser of the class of $(1,s)$ consists of elements $h\in\SchSL_m(\overline{\kappa})$ for which there exists $e\in E(\overline{\kappa})$ such that $(h,s)=(e,se)$. As $E$ acts on $\SchSL_n$ via $Q$, this equality holds if and only if $e\in N(\overline{\kappa})$, \textit{i.e.} if and only if $h\in N(\overline{\kappa})$. From this, we deduce that the stabiliser of the class of $(1,s)$ is $N(\overline{\kappa})$, and hence $X(\overline{\kappa})$, endowed with its $\SchSL_m(\overline{\kappa})$-action, is isomorphic to $\SchSL_m(\overline{\kappa})/N(\overline{\kappa})$.

Thus, if $x$ is a rational point of $\SchSL_n/Q$, then $f^{-1}(x)$ is a homogeneous space of $\SchSL_m$ whose geometric stabilisers are isomorphic to $N(\overline{k})$. Moreover, such homogeneous space verify (BM) by assumption, so that (b) is verified.\\

To verify (c) holds, first assume that (i) holds. If $x$ is the generic point of~$\SchSL_n/Q$, then we deduce from assumption (i) that the $\SchSL_m$-homogeneous space $f^{-1}(x)$ (whose geometric  stabiliser is isomorphic to $N(\oline k)$) has a rational point. If $y$ is such a rational point and $S$ denotes its stabiliser, it induces an isomorphism $\SchSL_m/S\xrightarrow{\sim} f^{-1}(x)$ defined by mapping the class of $g\in \SchSL_m$ to $g.y$. After composing with the quotient map $\SchSL_m\rightarrow\SchSL_m/S$, we obtain a dominant map $\SchSL_m\rightarrow f^{-1}(x)$, so that~$f^{-1}(x)$ is unirational.

Now, assume that (ii) holds. Since (\ref{sessemidirBM}) splits, the choice of embeddings we made in the beginning of the proof ensures that the diagonal embedding $\SchSL_n\rightarrow\SchSL_n\times\SchSL_n=\SchSL_m\times\SchSL_n$ is right $E$-equivariant. It thus induces a morphism $\SchSL_n/E\rightarrow(\SchSL_n\times\SchSL_n)/E$ which is a section of $f$. In particular, the generic fibre of $f$ has a rational point. The 
previous paragraph thus implies that the generic fibre of $f$ is unirational.

We may now apply \cite[Théorème 3]{Harari} to deduce that  $(\SchSL_m\times\SchSL_n)/E$ verifies (BM), but it remains to verify that it is stably birational to $\SchSL_m/E$. For this purpose, denote by $q:\SchSL_m\times\SchSL_n\rightarrow\SchSL_m$ the projection morphism, which is $E$-equivariant with respect to the right $E$-actions, so that it induces a quotient map $g:(\SchSL_m\times\SchSL_n)/E\rightarrow\SchSL_m/E$. First note that since $E$ acts freely on $\SchSL_m\times\SchSL_n$, then the following diagram, whose horizontal maps are the canonical quotient maps, is cartesian:
\begin{center}
\begin{tikzcd}[row sep=1.5cm]
\SchSL_m\times\SchSL_n\arrow[r]\arrow[d,"p"] & (\SchSL_m\times\SchSL_n)/E\arrow[d,"f"] \\
\SchSL_m\arrow[r] & \SchSL_m/E
\end{tikzcd}.
\end{center}
Since the map $\SchSL_m\rightarrow\SchSL_m/E$ is fppf, we deduce from this diagram that $g$ is a $\SchSL_m$-torsor for the fppf topology, hence for the Zariski topology by~\cite[Proposition 4.9]{Milne} (which is stated with $\SchGL_m$ although the proof is the same with $\SchSL_m$). In other words, the $\SchSL_m$-torsor~$g$ is locally trivial for the Zariski topology, yielding the claim.
\end{proof}

\subsection{Proof of Theorem \ref{bmgroups}}\label{nonsolvable}

Let us start by the following theorem, which is a combination of Proposition \ref{prodeh} with Corollaries \ref{corsymm} and~\ref{corAlt}. It gives the geometric nature of homogeneous spaces of $\SchSL_r$ whose geometric stabilisers are products of alternating and symmetric groups.

\begin{thm}\label{rateh}
Let $k$ be a field, $X$ a homogeneous space of~$\SchSL_{r,k}$ and $\overline{x}\in X(\overline{k})$. Denote by~$N$ the stabiliser of $\overline{x}$. 
\begin{enumerate}
\item If there exist distinct positive integers $n_1,\dots,n_s\not\in\{2,6\}$ and $t_1,\dots,t_s\in \mathbb N$ such that $$N(\overline{k})\simeq\prod_{i=1}^s\fS_{n_i}^{t_i},$$ then $X$ is stably rational.
\item If $k$ is of characteristic zero and if there exist distinct positive integers $n_1,\dots,n_s\not\in\{2,6\}$ and $t_0,\dots,t_s\in \mathbb N$ such that $$N(\overline{k})\simeq\fA_5^{t_0}\times \prod_{i=1}^s \fS_{n_i}^{t_i},$$ then $X$ is retract rational.
\end{enumerate}
\end{thm}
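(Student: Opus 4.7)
The plan is to apply Proposition \ref{prodeh} to split $X$ stably into a product of homogeneous spaces, one for each factor of the stabiliser, and then invoke Corollary \ref{corsymm} (and, in case (2), Corollary \ref{corAlt}) on each factor. First I must check that the hypotheses of Proposition \ref{prodeh} are met. Recall that $\fS_n$ for $n\neq 2,6$ is a complete group (hence almost complete) and that $\fA_5$ is almost complete with a section supplied by a transposition in $\fS_5=\Aut(\fA_5)$, as recalled at the beginning of Section~\ref{A5}. Both $\fA_5$ and all $\fS_n$ with $n\geq 1$ are indecomposable (their normal subgroups have no direct factor). By Lemma \ref{almostcomplete}.(ii), each power $\fS_{n_i}^{t_i}$ and $\fA_5^{t_0}$ is therefore almost complete.

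Next I would verify that no two of the constant groups $\fS_{n_1}^{t_1},\dots,\fS_{n_s}^{t_s}$ (and in case~(2) also $\fA_5^{t_0}$) share a common direct factor. Since each of the underlying indecomposable groups $\fS_{n_i}$ and $\fA_5$ is pairwise non-isomorphic (the $n_i$ are distinct and $\fA_5\not\simeq \fS_n$ for any $n$), an application of the Krull--Schmidt theorem for finite groups shows that no common direct factor can occur between different blocks. Combined with the previous paragraph, this places us in the situation of Proposition \ref{prodeh}, which yields a positive integer $m$ and a stable birational equivalence
\[
X\;\sim_{\mathrm{stb}}\;\Bigl(\prod_{i=1}^s X_i\Bigr)\qquad \bigl(\text{resp. }X_0\times\prod_{i=1}^s X_i\text{ in case (2)}\bigr),
\]
where each $X_i$ is an $\SchSL_{m,k}$-homogeneous space whose geometric stabiliser is isomorphic to $\fS_{n_i}^{t_i}$ (and $X_0$ has geometric stabiliser $\fA_5^{t_0}$ in case~(2)).

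It remains to treat each factor. Corollary \ref{corsymm} applied to $X_i$ (with $n_i\neq 2,6$) shows that $X_i$ is stably $k$-rational for every $i\in\{1,\dots,s\}$. A product of stably rational varieties is stably rational, which gives statement (1). In case (2), Corollary \ref{corAlt}, which requires $\charak k=0$, further ensures that $X_0$ is retract rational; since a product of retract rational varieties is retract rational and every stably rational variety is retract rational, the product $X_0\times\prod_{i=1}^s X_i$ is retract rational, whence so is~$X$.

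The only nontrivial step is the verification of the ``no common direct factor'' hypothesis, but the Krull--Schmidt property together with the indecomposability of the $\fS_n$'s and $\fA_5$ makes this immediate. Everything else is a direct assembly of results already proved in Sections~\ref{secacs}, \ref{secpcg} and~\ref{A5}.
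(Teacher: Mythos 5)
Your proposal is correct and takes essentially the same route as the paper: apply Proposition~\ref{prodeh} to split $X$ stably into a product $\prod_i X_i$ (plus $X_0$ in case (2)), then invoke Corollary~\ref{corsymm} on each $\fS_{n_i}^{t_i}$-factor and Corollary~\ref{corAlt} on the $\fA_5^{t_0}$-factor. The only difference is that you spell out the verification of the hypotheses of Proposition~\ref{prodeh} (almost completeness of the powers via Lemma~\ref{almostcomplete}.(ii) and the absence of common direct factors via Krull--Schmidt), which the paper leaves implicit; this is a harmless and arguably welcome expansion of the same argument.
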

\begin{proof}
Let us first prove  $(1)$. By Proposition \ref{prodeh}, there exist a positive integer~$n$, and $\SchSL_{n,k}$-homogeneous spaces $X_1,\dots,X_s$ with points $\overline{x_i}\in X_i(\overline{k})$  such that $X$ is stably birational to~$\prod_{i=1}^sX_i$ and 
the stabilisers $G_i(\overline{k})$ of $\overline{x_i}$ are isomorphic to~$\fS_{n_i}^{t_i}$ for $i=1,\ldots,s$. Since the~$n_i$'s are different from $2$ and $6$, Corollary~\ref{corsymm} ensures that the $X_i$'s are stably rational. Thus $X$ is also stably rational.

To prove $(2)$, Proposition \ref{prodeh} supplies a positive integer $n$ and $\SchSL_{n,k}$-homogeneous spaces~$X_0,\dots,X_s$ such that $X$ is stably birational to $\prod_{i=1}^sX_i$, where the geometric stabilisers of $X_0$ (resp. $X_i$) is isomorphic to $\fA_5^{t_0}$ (resp. $\fS_{n_i}^{t_i}$ for all $i\in\{1,\dots,s\}$). Thus, Corollary~\ref{corAlt} ensures that $X_0$ is retract rational and Corollary \ref{corsymm} ensures that the $X_i$'s are stably rational for each $i\in\{1,\dots,s\}$, so that $X$ is also retract rational.
\end{proof}


When combining Theorems \ref{semidirectBM} and \ref{rateh}, we  get a proof of Theorem~\ref{bmgroups}:

\begin{proof}[Proof of Theorem \ref{bmgroups}]
The assertion follows from Theorem \ref{semidirectBM} once we verify its  assumption (i) holds. Indeed, by $(2)$ of Theorem \ref{rateh}, 
every homogeneous space of $\SchSL_{r,k}$, $r\geq 1$ with geometric stabiliser $N(\overline{k})$ is retract rational, so that the Brauer-Manin obstruction to weak approximation is the only one on it as in \S\ref{sec:rational-BM}, showing that (i) holds. 
\end{proof}




\subsection{The (BM) property for "small" non-solvable groups}\label{secBMnonsoluble}

In this section, we combine our results with those in the literature to  review  the non-solvable groups of order at most $500$ for which (BM) is known. We encapsulate this in the following proposition.

\begin{prop}\label{BMnonsoluble}
Over any number field $k$, the (BM) property holds for every finite non-solvable group whose cardinality is at most $500$, except perhaps for those appearing in Table~\ref{noBMnonsoluble}.
\end{prop}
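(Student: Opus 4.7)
The plan is to enumerate all non-solvable groups of order at most $500$ by means of the GAP SmallGroups library, and for each such group $E$ to verify case by case whether (BM) is implied by Theorem~\ref{bmgroups} (possibly combined with Theorem~\ref{semidirectBM}) together with the previously known instances from the literature: abelian groups (Borovoi), supersolvable groups (Harpaz--Wittenberg), the symmetric groups $\fS_n$, the alternating group $\fA_5$, and $\mathrm{PSL}_2(\mathbf{F}_7)$ (Mestre). Those groups which none of these results reach will form Table~\ref{noBMnonsoluble}.

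The enumeration is considerably constrained by the observation that the only non-abelian simple groups of order at most $500$ are $\fA_5$ (order $60$), $\mathrm{PSL}_2(\mathbf{F}_7)$ (order $168$), and $\fA_6$ (order $360$). Hence the quotient of $E$ by its solvable radical must be isomorphic to one of these three simple groups, which sharply limits the structure of $E$. The systematic strategy for applying Theorem~\ref{bmgroups} is then to exhibit a normal subgroup $N \trianglelefteq E$ whose geometric points form a product $\fA_5^{t_0} \times \prod_{i=1}^{s} \fS_{n_i}^{t_i}$ with $n_i \neq 2,6$, such that the quotient $Q = E/N$ is already known to verify (BM)---the natural candidates for $Q$ being abelian or supersolvable groups. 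When instead $E$ admits a normal subgroup which is a form of $\mathrm{PSL}_2(\mathbf{F}_7)$, Theorem~\ref{semidirectBM} can be invoked directly with Mestre's retract rationality result providing the stabiliser hypothesis.

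The main obstacle---and the source of Table~\ref{noBMnonsoluble}---is the set of groups to which none of the above applies. These fall essentially into three families: the simple group $\fA_6$ itself, which admits no useful normal subgroup and for which (BM) remains open; central non-split extensions of $\fA_5$, $\fS_5$, and $\mathrm{PSL}_2(\mathbf{F}_7)$ (the smallest being those of orders $240$ and $336$ with nontrivial centre), where the only distinguished normal subgroup is central of order $2$ and the extension fails to split, preventing the application of Theorem~\ref{semidirectBM} or of Harari's original d\'evissage for abelian kernels; and further extensions built from these central extensions. Verifying via GAP that every other group in the enumeration admits a decomposition amenable to our tools, and compiling the residual list, produces Proposition~\ref{BMnonsoluble} together with Table~\ref{noBMnonsoluble}.
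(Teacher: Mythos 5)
Your overall strategy---an exhaustive computer search over the small-groups database, pruned by dévissage theorems and the known base cases---is the same as the paper's, and your observation that the almost-simple quotient of any non-solvable group of order at most $500$ is built from $\fA_5$, $\mathrm{PSL}_2(\mathbf{F}_7)$, or $\fA_6$ is a useful heuristic. However, your toolkit is too small to actually produce the stated Table~\ref{noBMnonsoluble}: at least two groups lying \emph{outside} the table would slip through your sieve.

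The first is $\SchSL_2(\mathbf{F}_5)$, of order $120$, the unique perfect central extension $C_2.\fA_5$. Its only proper nontrivial normal subgroup is the central $C_2\simeq\fS_2$, which is explicitly excluded from the kernels allowed in Theorem~\ref{bmgroups}; the extension is non-split, so neither Theorem~\ref{semidirectBM}(ii) nor Harari's original abelian-kernel dévissage applies; and $\SchSL_2(\mathbf{F}_5)$ is neither supersolvable, nor a symmetric or alternating group, nor $\mathrm{PSL}_2(\mathbf{F}_7)$. The paper disposes of it by citing the existence of a generic polynomial for $\SchSL_2(\mathbf{F}_5)$ (Reminder~\ref{nonsolubleknown}, via \cite{MR2318648}), a fact absent from your list of base cases. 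Without it, $\SchSL_2(\mathbf{F}_5)$ would have to appear in your table, contradicting the paper's.

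The second omission is Plans' theorem (Tool~\ref{toolB}, \cite[Theorem~11]{MR2517810}) on double covers of $\fS_n$: this is what the paper uses in its Step~2 to rule out the central non-split extensions of $\fS_5$ with Magma IDs $240{,}90$ and $240{,}91$, together with their products with $C_2$. Like $\SchSL_2(\mathbf{F}_5)$, these groups have only a central $C_2$ as a distinguished normal subgroup and the covering does not split, so they escape every dévissage method you invoke; yet they do not appear in Table~\ref{noBMnonsoluble}. Your description of the central-extension obstruction is therefore accurate, but you have mislocated the boundary of what is reachable: Tool~\ref{toolB} pushes it further than Theorems~\ref{bmgroups} and \ref{semidirectBM} alone. (A smaller point: invoking Theorem~\ref{semidirectBM} with kernel a form of $\mathrm{PSL}_2(\mathbf{F}_7)$ would require (BM) for \emph{all} homogeneous spaces of $\SchSL_r$ with that geometric stabiliser, not merely $\SchSL_r/\mathrm{PSL}_2(\mathbf{F}_7)$; since $\mathrm{Out}(\mathrm{PSL}_2(\mathbf{F}_7))=C_2$ is nontrivial, Mestre's result alone does not immediately cover the twisted forms. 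In the end this case never actually arises in the range $\le 500$, but the claim as written is not justified.)
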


To describe the groups in Tables \ref{noBMnonsoluble} we use the following notations. For finite groups~$G$,~$H$ and~$Q$, we write $G=H\rtimes Q$ for a semidirect product of $Q$ by $H$, and  $G=H.Q$ when~$G$ is a non-split extension of $Q$ by $H$. Let $D_n=C_n\rtimes C_2$ denote the dihedral group of order~$2n$. 
Let $Q_8$ denote the quaternion group, that is, the unique non-split extension of~$C_2$ by~$C_4$. Let $\GL[2](\mathbf{F}_q)$, $\SchSL_2(\mathbf{F}_q)$, $\PGL_2(\mathbf{F}_q)$, and $\PSL_2(\mathbf{F}_q)$ denote respectively the $2\times 2$ general, special,  projective general, and projective special linear groups  over $\mathbf{F}_q$.

\begin{center}
\begin{longtable}{|c|c|p{6cm}|}
\hline 
Order & Magma ID& Non-solvable groups for which (BM) is \textit{a priori} unknown \\
\hline 
\multirow{2}{*}{$240$} & 240,89 & $\mathrm{CSU}_2(\mathbf{F}_5)=C_2.\fS_5$\\
& 240,93 & $C_4.\fA_5$ \\
\hline \multirow{2}{*}{$336$} & 336,114 & $\SchSL_2(\mathbf{F}_7)$ \\
& 336,208 & $\mathrm{PGL}_2(\mathbf{F}_7)$ \\
\hline \multirow{1}{*}{$360$} & 360,118 & $\fA_6$\\
\hline \multirow{11}{*}{$480$} & 480,218 & $\SchGL_2(\mathbf{F}_5)$ \\
& 480,219 & $C_2^2.\fS_5$ \\
& 480,221 & $C_8.\fA_5$ \\
& 480,946 & $C_4.\fS_5$ \\
& 480,947 & $C_4.\fS_5$ \\
& 480,948 & $C_4.\fS_5$ \\
& 480,949 & $C_2\times \mathrm{CSU}_2(\mathbf{F}_5)$ \\
& 480,953 & $C_2^2.\fS_5$ \\
& 480,955 & $C_2\times (C_4.\fA_5)$ \\
& 480,957 & $D_4.\fA_5$ \\
& 480,959 & $Q_8.\fA_5$ \\
\hline \caption{List of non-solvable groups with cardinality at most~$500$ for which (BM) is unknown}
\label{noBMnonsoluble}
\end{longtable}
\end{center}

The proof of Proposition \ref{BMnonsoluble} makes use of Theorem \ref{bmgroups},  Harari's theorem \cite[Théorè\-me 1]{Harari} on split extensions of groups verifying~(BM) by abelian groups, and a theorem of Plans on the field of invariants of double covers of symmetric groups \cite[Theorem 11]{MR2517810}: 
\begin{manualtheorem}{A}[{\cite[Théorème 1]{Harari}}]\label{toolA} Given a split short exact sequence of finite groups
\begin{center}
\begin{tikzcd}
1 \arrow[r] & A \arrow[r] & E\arrow[r] & G\arrow[r] & 1
\end{tikzcd}
\end{center}
with $A$ abelian and $G$ verifying (BM), the group $E$ also verifies (BM).
\end{manualtheorem} 

\begin{manualtheorem}{B}[{\cite[Theorem 11]{MR2517810}}]\label{toolB}
Let $n\geq3$ be an odd integer and identify $\fS_{n-1}$ with the subgroup of $\fS_n$ fixing $n$. Consider a positive integer $r$ and an exact sequence of  groups:
\begin{center}
    \begin{tikzcd}
        1\arrow[r] & C_2^r \arrow[r] & H \arrow[r, "\pi"] & \fS_n \arrow[r] & 1
    \end{tikzcd}
\end{center}
such that the center of $H$ contains $C_2^r$. If we denote by $E$ the inverse image of $\fS_{n-1}$ by $\pi$, then $H$ verifies (BM) if and only if $E$ verifies (BM).
\end{manualtheorem}


\begin{manualtheorem}{C}\label{toolC}
Finite products of finite groups verifying (BM) also verify (BM).
\end{manualtheorem}

We also use the following results regarding the rationality of fields of invariants:
\begin{remi}\label{nonsolubleknown}
The following properties hold over $\mathbf{Q}$:
\begin{itemize}
\item All symmetric groups $\fS_n$ have stably rational fields of invariants.
\item The alternating group $\fA_5$ has stably rational fields of invariants by \cite{MR1018955}.
\item The group $\SchGL_2(\mathbf{F}_3)$ has stably rational fields of invariants by \cite{MR2357794}.
\item The  group $\SchSL_2(\mathbf{F}_5)$ has a generic polynomial by \cite{MR2318648}.
\item The simple group $\SchGL_3(\mathbf{F}_2)\simeq \mathrm{PSL}_2(7)$ has stably rational fields of invariants by \cite[Théorème~3]{mestre2005correspondances}.
\item The quaternion group $Q_8$ has stably rational fields of invariants by \cite{MR1550301}. 
\end{itemize}
In particular, all these groups verify (BM).
\end{remi}

\begin{proof}[Proof of Proposition \ref{BMnonsoluble}]
To prove the statement, we run the following algorithm, both by hand and using Magma,  over the non-solvable groups of cardinality $\leq 500$. 
To go over the list by hand, we use the online database of Dokchister \cite{Dokns},
where the groups are listed  via their Magma ID in the "Small Groups Library" of Magma. Our list is ordered by increasing Magma ID.

\textit{Step 1.} Following this order, determine all the groups $G$ for which (BM) follows from Tool \ref{toolA}, Tool \ref{toolC} and Reminder \ref{nonsolubleknown} by looking at all the short exact sequences in which~(I)~$G$ fits in the middle, the kernel is abelian, and (BM) is  known for the quotient by the reminder, or (II) $G$ is the product of two groups for which (BM) is known, one factor by the reminder and the other either by tool A or the reminder. The code in \cite["nonsolvable" code file]{Magma} outputs the minimal non-solvable groups for which (BM) remains unknown after ruling out the groups in (I), that is, those which are not extensions of a smaller group by an abelian kernel. For the smaller  groups, (BM) follows from the reminder. One additional group~$Q_8\times A_5$ is then ruled out in (II) as the product of two groups for which (BM) is known by Tool \ref{toolC}. 

\textit{Step 2.} Out of the remaining $22$ groups, we rule out two central extension $H$ of $S_5$, with Magma IDs 240,90-91, using Tool \ref{toolB}.  For these, \ref{toolB} yields an extension $E$ for which (BM) is known by the reminder and Proposition \ref{rsmallgroups}. We then rule out two more groups which are the direct products of these groups with $C_2$. 

\textit{Step 3.} Out of the remaining $18$ groups, we use Theorem \ref{bmgroups} to rule out two group extensions, $A_5\rtimes C_8$ and $A_5\rtimes Q_8$ with kernel $A_5$ and quotients satisfying (BM) by tools A 
The remaining $16$ groups appear in Table \ref{noBMnonsoluble}.
\end{proof}

\begin{rem}
Using Tool \ref{toolB}, we further reduce the verification of (BM) for the following groups to verifying (BM) for solvable groups of smaller order. We shall see in Section \ref{secmetabelian} that (BM) holds for these groups under  Schinzel's hypothesis H. 
\begin{itemize}
    \item The groups with Magma ID $240,89$ and $480,949$ satisfy (BM) if and only if the solvable group with Magma ID $48,28$ verifies (BM);
    \item The group with Magma ID $480,219$ satisfies (BM) if and only if the solvable group with Magma ID $96,66$ verifies (BM);
    \item The group with Magma ID $480,953$ satisfies (BM) if and only if the solvable group with Magma ID $96,190$ verifies (BM).
\end{itemize}
\end{rem}

\section{Metabelian stabilisers and Grunwald problems for small solvable groups}\label{secmetabelian}
Throughout this section $k$ is a number field. In the first part of this section, we prove that the Brauer-Manin obstruction to weak approximation is the only one for homogeneous spaces of $\SchSL_{n,k}$ whose geometric stabiliser has derived subgroup $C_2$, under Schinzel's hypothesis (H), and hence deriving Theorem \ref{bmschinzel}. 
In Section~\ref{smallgroups} we make use of Theorem~\ref{bmschinzel} and the results of Harari \cite{Harari} and Harpaz-Wittenberg \cite{HW1} to list the finite groups of order at most $191$ for which (BM)  is unknown.

\subsection{Homogeneous spaces with metabelian stabilisers}\label{metabstab}


The following is the main theorem on homogeneous spaces whose derived geometric stabilisers is $C_2$: 

\begin{thm}\label{metabelian}
Let $k$ be a number field, $X$ a homogeneous space of $\SchSL_{n,k}$ and $\overline{x}\in X(\overline{k})$. Assume that the stabiliser $G$ of $\overline{x}$ is a finite algebraic $\overline{k}$-group, and assume further that $G'= C_2$. If Schinzel's hypothesis (H) holds, then the Brauer-Manin obstruction to weak approximation is the only one on $X$.
\end{thm}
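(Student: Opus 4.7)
The plan is to separate the contributions of the derived subgroup $G'\simeq C_2$ from those of the abelian quotient $G^{\ab}:=G/G'$, handling the former via the Schinzel-based fibration method and the latter via descent under a torus. Following the construction used in the proof of Theorem~\ref{semidirectBM}, first choose a faithful $k$-embedding $G\hookrightarrow\SchSL_{n,k}$ realising $X\simeq\SchSL_{n,k}/G$, a faithful $k$-embedding $G^{\ab}\hookrightarrow\SchSL_{m,k}$ for some $m$, and form the diagonal quotient $Y:=(\SchSL_{n,k}\times\SchSL_{m,k})/G$ where $G$ acts on the first factor through its given embedding and on the second via the composition $G\to G^{\ab}\hookrightarrow\SchSL_{m,k}$. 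The first projection $Y\to X$ is an $\SchSL_{m,k}$-torsor, locally trivial for the Zariski topology by Hilbert 90, so $Y$ and $X$ are stably $k$-birational and it suffices to prove (BM) for $Y$. The second projection $p:Y\to Z:=\SchSL_{m,k}/G^{\ab}$ has fibres above rational points isomorphic to $\SchSL_{n,k}$-homogeneous spaces with geometric stabiliser $G'\simeq C_2$, hence geometrically stably rational.

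Next, the base $Z$ is a homogeneous space with finite abelian geometric stabiliser, so by Borovoi \cite{MR1390687} it verifies (BM); more importantly, a flasque resolution of $G^{\ab}$ allows one to identify $Z$ (up to stable $k$-birational equivalence) with a principal homogeneous space under a $k$-torus $T$. The descent theorem of Harpaz--Wittenberg \cite[Corollaire 2.2]{HW1} for torsors under tori then reduces the verification of (BM) on $Y$ to the same property for each twist $p':Y'\to Z'$ of $p$ by a class in $\coh^1(k,T)$, where $Z'$ is stably $k$-birational to the rational variety $T$.

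For each such twist, $p':Y'\to Z'$ is a dominant morphism over a stably rational base, whose generic fibre is a homogeneous space with geometric stabiliser $C_2$, that is, a quadric-like variety stably birational to a smooth conic, and whose degenerate fibres above codimension-one points of $Z'$ are controlled by a single quadratic \'etale $k$-algebra (the one measuring the $k$-form of $G'$). This is precisely the setting of the conic-bundle fibration theorem \cite[Chapitre 3, Corollaire 3.5]{MR2307807}, which, assuming Schinzel's hypothesis~(H), concludes that the Brauer--Manin obstruction to weak approximation is the only one on~$Y'$. Combined with the descent step and the stable $k$-birational equivalence $Y\sim_{\mathrm{sb}} X$, this yields (BM) on $X$. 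The main technical obstacle is the last step: one has to verify that the degeneration behaviour of $p'$ along codimension-one points of $Z'$ really fits the ``conic-bundle'' hypotheses of \cite[Corollaire 3.5]{MR2307807}, which is where $G'\simeq C_2$ is decisive---it forces the fibres to be split by a quadratic extension, the exact case for which Schinzel's hypothesis provides the required arithmetic input.
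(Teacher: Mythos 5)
Your high-level strategy --- separating $G^{\ab}$ from $G'=C_2$, then combining descent under a torus with the Schinzel-based fibration theorem \cite[Chapitre 3, Corollaire~3.5]{MR2307807} --- is the right one, and the final arithmetic input (degenerate fibres split by a quadratic extension, smooth fibres $C_2$-stabiliser homogeneous spaces) does match what the paper uses. However, the implementation has three genuine gaps. First, your opening move, ``choose a faithful $k$-embedding $G\hookrightarrow\SchSL_{n,k}$ realising $X\simeq\SchSL_{n,k}/G$'', presupposes that $G$ is a $k$-group, i.e.\ that $X$ has a $k$-point; but the statement only assumes $G$ is a $\overline{k}$-group, and indeed Theorem~\ref{bmschinzel} needs the case $X(k)=\emptyset$. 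The paper avoids this by starting from the universal $D$-torsor $f:Y\to X$ (with $D$ the Cartier dual of $\Pic(X_{\overline{k}})=\Hom(G^{\ab}(\overline{k}),\overline{k}^\times)$), which exists under the harmless hypothesis $X(k_\Omega)^{\Brun}\neq\emptyset$ by \cite[Proposition~6.1.4]{Sko}.

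Second, your application of \cite[Corollaire~2.2]{HW1} is structurally off: that result requires a $T$-torsor \emph{over the variety $Y$ one wants to prove (BM) for}, whereas you identify the \emph{base} $Z$ with a $T$-torsor over $\Spec(k)$, which is a different kind of object; the twists $p':Y'\to Z'$ you describe are not produced by the descent theorem. The paper constructs the needed $T$-torsor by pushing forward the universal $D$-torsor along $D\hookrightarrow T$. Third, even granting a descent reduction, the base of your twisted fibration is not rational: a $k$-torus $T$ is not in general rational, and nontrivial $T$-torsors over $\Spec(k)$ certainly need not be, so the Schinzel fibration theorem (which needs the base to compactify inside $\mathbf{P}^d_k$) does not apply. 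The paper precisely circumvents this by choosing $1\to D\to T\to Q\to 1$ with $Q$ a \emph{quasi-trivial} torus (via \cite[Lemma~3.8]{HW2}) and fibring over $Q$; after twisting by $\sigma\in\coc^1(k,T)$ the base is ${}_\sigma Q\simeq Q$, because quasi-triviality forces $\coh^1(k,Q)=0$, and $Q$ is an open subset of affine space. These are exactly the spots where the universal torsor machinery is essential, so the gaps are not cosmetic.
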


Let us recall that a reduced variety $X$ over a field $k$ is said to be \textit{split} if it contains an irreducible component which is geometrically irreducible \cite{MR1408492}. If $L/k$ is a field extension, one says that $X$ is split by $L$ if $X\otimes_kL$ is split. Equivalently, this means that there exists an irreducible component $Y$ of $X$ such that the algebraic closure of $k$ in $k(Y)$ embeds in~$L$.

Throughout the section  we denote by $\Pi^1_{\et}(X,\overline{x})$, for any geometric point $\overline{x}\in X(\overline{k})$, the following short exact sequence of profinite groups (see \cite[Théorème 6.1]{SGA1}):
\begin{center}
\begin{tikzcd}
1 \arrow[r] & \pi^1_{\et}(X\otimes_k\overline{k},\overline{x}) \arrow[r] & \pi^1_{\et}(X,\overline{x}) \arrow[r] & \Gamma_k \arrow[r] & 1.
\end{tikzcd}
\end{center}
In addition, we recall that if $S$ is a $k$-group of multiplicative type and $\widehat{S}$ its dual, then an $S$-torsor $f:Y\rightarrow X$ is called \textit{universal} if the homomorphism $\widehat{S}(\overline{k})\rightarrow\Pic(X_{\overline{k}})$, mapping a character $\chi:S_{\overline{k}}\rightarrow\mathbf{G}_{m,\overline{k}}$ to the image of $f$ by the pushfoward map $\chi^*:\coh^1(X_{\overline{k}},S_{\overline{k}})\rightarrow\coh^1(X_{\overline{k}},\mathbf{G}_m)=\Pic(X_{\overline{k}})$, is an isomorphism (see also \cite[p.25]{Sko}).

\begin{proof}[Proof of Theorem \ref{metabelian}]
Assume that $X(k_{\Omega})^{\Brun}\neq\emptyset$. Since $\SchSL_{n,k}$ is semisimple, Rosenlicht's lemma ensures that $\overline{k}[X_{\overline{k}}]^\times=\overline{k}^\times$. Furthermore, by \cite[\S5.2]{HW1}, one has $\Pic(X_{\overline{k}})=\Hom(G^{\ab}(\overline{k}),\overline{k}^\times)$ so that $\Pic(X_{\overline{k}})$ is finite. Thus, \cite[Proposition 6.1.4]{Sko} ensures that there exists a universal left $D$-torsor $f:Y\rightarrow X$ in the sense of \cite[\S2]{MR899402} and \cite[Definition~2.3.3]{Sko}, where $D(\overline{k})$ is the Cartier dual of $\Pic(X_{\overline{k}})$, hence $D(\overline{k})=G^{\ab}(\overline{k})$ so that~$D$ is a finite abelian group. In particular, $D$ is an algebraic group of multiplicative type and from \cite[Lemma 3.8]{HW2} one infers that $D$ fits in a short exact sequence
\begin{center}
\begin{tikzcd}
1 \arrow[r] & D\arrow[r] & T \arrow[r] & Q \arrow[r] & 1
\end{tikzcd}
\end{center}
where $T$ is a torus and $Q$ a quasi-trivial torus.

As in \S\ref{sec:tors}, let $_TY$
be the quotient of $T\times Y$ under the diagonal action of~$D$ defined for any point $d$ in $D$ by $(t,y)\mapsto (td^{-1},d.y)$, and denote by $_Tf:{}_TY\rightarrow X$ and $p:{}_TY\rightarrow Q$ the two projections. In order to prove that the Brauer-Manin obstruction to weak approximation is the only one for $X$, we are to apply the descent method for torsors under tori \cite[Corollaire 2.2]{HW1} to the left $T$-torsor $_Tf$. For this purpose, let us fix $\sigma\in\coc^1(k,T)$, and let us prove that the Brauer-Manin obstruction to weak approximation is the only one for the twisted variety~$_{\sigma}({}_TY)$.

After twisting the morphism $_TY\rightarrow Q$ by $\sigma$, one gets a morphism $g:{}_{\sigma}({}_TY)\rightarrow {}_{\sigma}Q$. Furthermore, since $Q$ is quasi-trivial, Shapiro's lemma combined with Hilbert's theorem~$90$ ensures that $_{\sigma}Q\simeq Q$, so that $Q$ is an open subset of an affine space $\mathbf{A}^d_k$. One may then choose a smooth compactification $Z$ of $_{\sigma}({}_TY)$ such that $g$ extends to a morphism $\overline{g}:Z\rightarrow\mathbf{P}^d_k$. Since $_{\sigma}({}_TY)$ is an open subset of $Z$, it is enough to prove that the Brauer-Manin obstruction to weak approximation is the only one for $Z$. We prove the latter by applying the fibration method \cite[Chapitre 3, Corollaire 3.5]{MR2307807} to the fibration $\overline{g}$. Indeed, the fibres of $g$ are twists of $f$ by an element of $\coc^1(k,D)$, that is, they are universal $D$-torsors over $X$. But \cite[Corollaire 5.4]{HW1} ensures that such universal torsors are homogeneous spaces of $\SchSL_{n,K}$ with geometric stabiliser~$C_2$. Thus, Lemma \ref{WAC2} below implies that the smooth fibres of $g$ over rational points of $_{\sigma}Q$ have the weak approximation property, and Lemma \ref{SplitC2} below guarantees that the fibres of $\overline{g}$ over points of codimension one of $\mathbf{P}^d_k$ are split by a quadratic extension. In other words, the assumptions of \cite[Chapitre 3, Corollaire~3.5]{MR2307807} are satisfied, when applied to the fibration $\overline{g}$, yielding the statement.
\end{proof}

\begin{lem}\label{WAC2}
Let $V$ be an $\SchSL_{n,k}$-homogeneous space with geometric stabiliser $C_2$. Then $V$ verifies weak approximation.
\end{lem}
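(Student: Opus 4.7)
The plan is to first verify that $V$ has a $k$-rational point whenever $V(k_\Omega)\neq\emptyset$, and then to observe that $V$ is stably $k$-rational, hence satisfies weak approximation. We may assume $V(k_\Omega)\neq\emptyset$ throughout, as otherwise weak approximation holds vacuously.

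The first step will produce a rational point via a Hasse principle. For a geometric point $\overline{x}\in V(\overline{k})$ with stabiliser $G$, the geometric fundamental group of $V$ coincides with $G(\overline{k})\simeq C_2$, endowed with trivial $\Gamma_k$-action (since $\Aut(C_2)$ is trivial). The short exact sequence
\[ 1 \to C_2 \to \pi^1_{\et}(V,\overline{x}) \to \Gamma_k \to 1 \]
thus defines a class $\alpha\in \coh^2(\Gamma_k, C_2)$ which vanishes exactly when the sequence splits. The hypothesis $V(k_v)\neq\emptyset$ at each place $v$ yields a local section (via functoriality of $\pi^1_{\et}$ applied to a $k_v$-rational point), so the image of $\alpha$ in $\coh^2(\Gamma_{k_v},C_2)$ vanishes for every $v$. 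Since $C_2\simeq\mu_2$ as trivial $\Gamma_k$-modules, the Kummer sequence identifies $\coh^2(\Gamma_k,C_2)$ with $\Br(k)[2]$, and the Brauer-Hasse-Noether theorem then forces $\alpha=0$. Applying \cite[Theorem 7.6]{PS} (as in Proposition~\ref{acrp}) to the resulting splitting will supply a rational point of $V$.

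The second step is easy: once $V(k)\neq\emptyset$, we may write $V\simeq \SchSL_{n,k}/G$ for a $k$-form $G$ of $C_2$, and since $\Aut(C_2)$ is trivial, $G$ is necessarily the constant group $C_2$. By Lemma~\ref{noname}, $V$ is then stably $k$-birational to $\mathbf{A}^1/C_2$ for the sign action, which is $k$-isomorphic to $\mathbf{A}^1$ via $x\mapsto x^2$. Hence $V$ is stably $k$-rational, and weak approximation follows since it is a stable birational invariant for smooth varieties and holds on affine space.

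The hard step will be the Hasse principle argument: it crucially uses that $C_2$ is simultaneously abelian and Kummer-dual to $\mu_2$, so that $\coh^2(\Gamma_k,C_2)$ embeds into $\Br(k)$. For a more general finite abelian stabiliser this embedding is lost, and one would need to invoke the Brauer-Manin machinery rather than the bare Brauer-Hasse-Noether theorem.
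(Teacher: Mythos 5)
Your proof is correct and follows essentially the same two-step route as the paper: first establish the Hasse principle by noting that the class of the fundamental exact sequence lies in $\coh^2(k,C_2)={}_2\Br(k)$, vanishes locally because of the local sections afforded by $k_v$-points, hence vanishes globally by Brauer--Hasse--Noether, and then invoke \cite[Theorem 7.6]{PS} to produce a $k$-point; second, use the no-name lemma to conclude that $V\simeq\SchSL_{n,k}/C_2$ is stably $k$-rational and therefore satisfies weak approximation. The paper realises the second step via the rational quotient $\mathbf{A}^2_k/C_2\cong\mathbf{A}^2_k$ (swap action), whereas you use $\mathbf{A}^1/C_2\cong\mathbf{A}^1$ (sign action); both involve representations landing in $\SchGL$ rather than $\SchSL$, so both really rely on the general form of the no-name lemma for faithful linear representations rather than the literal $\SchSL$-statement of Lemma~\ref{noname} — a shared informality, not a gap in your argument.
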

\begin{proof}
Let us first note that $V$ verifies the Hasse principle. Indeed, assume that $V$ has local points and denote by $\overline{v}$ a geometric point of $V$. Note that $\Pi^1_{\et}(V,\overline{v})$ is isomorphic to
\begin{center}
\begin{tikzcd}
1\arrow[r] & C_2 \arrow[r] & \pi^1_{\et}(V,\overline{v}) \arrow[r] & \Gamma_k \arrow[r] & 1.
\end{tikzcd}
\end{center}
Since $V$ has local points, the class $[\Pi^1_{\et}(V,\overline{v})]\in\coh^2(k,C_2)={}_2\Br(k)$ is mapped to the trivial class in $\coh^2(k_v,C_2)={}_2\Br(k_v)$ for each $v\in\Omega_k$. The Brauer-Hasse-Noether exact sequence thus implies that $[\Pi^1_{\et}(V,\overline{v})]\in\coh^2(k,C_2)$ is trivial, so that $\Pi^1_{\et}(V,\overline{v})$ admits a section, which means that $V(k)\neq\emptyset$ by \cite[Theorem 7.6]{PS}.

Thus, if $V(k_{\Omega})\neq\emptyset$, one has  $V(k)\neq\emptyset$, so that $V\simeq \SchSL_{n,k}/C_2$. As $\mathbf{A}^2_k/C_2\cong \mathbf{A}^2_k$ is stably birational to $V$ by Lemma \ref{noname}, the variety $V$ is then $k$-stably rational, hence verifies the weak approximation property.
\end{proof}

\begin{lem}\label{SplitC2}
Let $R$ be a discrete valuation ring, $K$ its fraction field and $\kappa$ its residue field. Assume  $\kappa$ is of characteristic zero. Consider an $\SchSL_{n,K}$-homogeneous space $V$ with geometric stabiliser $C_2$ and let $\mathscr{V}$ be a regular proper $R$-scheme. If the generic fibre of~$\mathscr{V}$ contains $V$ as an open dense subscheme, then $\mathscr{V}\otimes_R\kappa$ is split by a quadratic extension of $\kappa$.
\end{lem}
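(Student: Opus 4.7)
The plan is to use the valuative criterion of properness after producing a rational point of $V$ over a suitable small extension of $K$. As in the proof of Lemma~\ref{WAC2}, the obstruction to $V(K)\neq\emptyset$ is the class $\alpha:=[\Pi^1_{\et}(V,\overline v)]\in H^2(K,C_2)\cong{}_2\Br(K)$ coming from the étale fundamental group sequence. Using the residue map $\partial:{}_2\Br(K)\to H^1(\kappa,C_2)\cong\kappa^\times/\kappa^{\times 2}$ attached to the DVR~$R$, let $\bar a$ represent $\partial\alpha$ and set $\kappa':=\kappa(\sqrt{\bar a})$; this is the candidate quadratic (or possibly trivial) extension of $\kappa$ splitting~$\mathscr{V}_\kappa$.

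The first main step is to exhibit a rational point of $V$ over a field $K'$ whose valuation ring has residue field embedded in $\kappa'$. Passing to the unramified extension $R'/R$ with residue field $\kappa'$ and fraction field $K'$, the class $\alpha|_{K'}$ has trivial residue, hence lies in $\Br(R^{\prime h})\cong\Br(\kappa')$. Over the strict henselization ${K'}^{sh}$ the Brauer group vanishes (as the residue field $\kappa'^{sep}$ is separably closed), so $V({K'}^{sh})\neq\emptyset$. The delicate issue is then to descend such a point to a finite subextension of ${K'}^{sh}/K'$ whose residue field remains inside $\kappa'$. Here the hypothesis that $V$ is an $\SchSL_{n,K}$-homogeneous space with $C_2$-stabiliser (rather than an arbitrary variety with the same Brauer obstruction) is crucial: the canonical geometric double cover $\SchSL_{n,\overline K}\to V_{\overline K}$ (corresponding to the $2$-torsion line bundle generating $\operatorname{Pic}(V_{\overline K})=C_2$, which is Galois-fixed) furnishes, via Shapiro's lemma applied to the extension $1\to C_2\to\pi^1_{\et}(V,\overline v)\to\Gamma_K\to 1$, a cohomological handle that precisely controls the minimal field of definition of such a point by the quadratic extension $\kappa'$ identified above.

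Granting such a $K'$-rational point $y\in V(K')$, view it as a morphism $\Spec(K')\to V\hookrightarrow\mathscr{V}_K\hookrightarrow\mathscr{V}$. By the valuative criterion applied to the proper morphism $\mathscr{V}\to\Spec(R)$, it extends uniquely to $\Spec(R')\to\mathscr{V}$; reducing modulo the maximal ideal of $R'$ yields a $\kappa'$-rational point of $\mathscr{V}_\kappa\otimes_\kappa\kappa'$. The irreducible component of $\mathscr{V}_\kappa\otimes_\kappa\kappa'$ passing through this point is a component of the base-change of some irreducible component $W$ of $\mathscr{V}_\kappa$, and the existence of a $\kappa'$-rational point on $W$ forces the algebraic closure of $\kappa$ in $\kappa(W)$ to admit a $\kappa$-embedding into $\kappa'$. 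This is exactly what it means for $\mathscr{V}_\kappa$ to be split by $\kappa'$.

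The main obstacle is the descent step above: going from $V({K'}^{sh})\neq\emptyset$ to $V(K')\neq\emptyset$, or at least to a point over a finite subextension of ${K'}^{sh}/K'$ whose residue field remains contained in $\kappa'$. A general $2$-torsion Brauer class need not be killed by a quadratic extension, so this descent cannot be purely Brauer-theoretic; it must genuinely use the $\SchSL_n$-action, and identifying the correct cohomological argument controlling the field of definition is the crux of the proof.
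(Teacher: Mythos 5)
Your overall framing is right: the candidate quadratic extension $\kappa'$ is indeed the one corresponding to the residue $\partial_R[\Pi^1_{\et}(V,\overline v)]\in H^1(\kappa,C_2)$, and the right first move is to base change along an unramified extension so that the class becomes unramified. But the step you flag as "the crux" is exactly where the proposal fails, and no amount of exploiting the $\SchSL_n$-action on its own will fix it. The issue with going to the strict henselization is that its residue field is the whole separable closure $\overline\kappa$, so the rational point you produce there carries no information about which finite extension it is defined over; there is no Shapiro-type argument on $\pi^1_{\et}(V,\overline v)\to\Gamma_{K'}$ that forces the field of definition to land inside the fixed quadratic extension $\kappa'$. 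Indeed the Brauer class over $K'$ is merely \emph{unramified}, not split by $\kappa'(t)$, so your correct observation that "a general $2$-torsion Brauer class need not be killed by a quadratic extension" is precisely the reason this route stalls, and it cannot be rescued by appealing to the geometric double cover.

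The paper's actual argument avoids the descent problem rather than solving it. After reducing to $R=\kappa\llbracket t\rrbracket$ and base-changing by the unramified extension with residue field $\kappa'$ (so the residue of the class is trivial, and one must show the special fibre is split outright), it invokes a theorem of Ducros: there exists a field $\lambda\supseteq\kappa$ with $\kappa$ algebraically closed in $\lambda$ and $\cd(\lambda)\le1$. One then base changes to $S=\lambda\llbracket t\rrbracket$. Because $\kappa$ is algebraically closed in $\lambda$, splitness of $\mathscr V\otimes_R\kappa$ can be tested after this base change; because $\cd(\lambda)\le1$ and $S$ is henselian, $\Br(S)=\Br(\lambda)=0$, so the unramified Brauer class becomes \emph{zero} over $\operatorname{Frac}(S)$ — no descent of a point from any further extension is needed. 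Pál–Schlank then converts the resulting section of $\Pi^1_{\et}$ into an honest $\operatorname{Frac}(S)$-point of $V$, the valuative criterion pushes it to a $\lambda$-point of the special fibre, and \cite[Lemma 1.1(b)]{MR1408492} gives splitness. The homogeneous-space structure thus enters only through Pál–Schlank (section $\Rightarrow$ rational point); the Brauer-theoretic difficulty you identified is handled by Ducros's cohomological-dimension reduction, which is the missing ingredient in your proposal.
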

\begin{proof}
First note that after replacing $\mathscr{V}$ by $\mathscr{V}\otimes_R\widehat{R}$, where $\widehat{R}$ denotes the completion of~$R$, one may assume that $R$ is complete, so that $R\simeq\kappa\llbracket t\rrbracket$ by \cite[Chapitre II, \S4, Théorème~2]{MR0150130}. Denote by $\partial_R:\coh^2(K,C_2)\rightarrow\coh^1(\kappa,C_2)$ the Serre residue map \cite[~\S 1.4.1]{CTS} and fix a geometric point $\overline{v}$ of $V$. Then the image of the class of $\Pi^1_{\et}(V,\overline{v})$ by $\partial_R$ corresponds to a quadratic extension $\kappa'/\kappa$, and we are to prove that~$\mathscr{V}\otimes_R\kappa$ is split by $\kappa'/\kappa$.

Since $R$ is henselian, \cite[Theorem II.3.10]{Milne} ensures that $\coh^1(R,C_2)=\coh^1(\kappa,C_2)$ so that~$\kappa'/\kappa$ is the special fibre of an \'{e}tale $R$-algebra $R'$. We replace $\mathscr{V}$ by $\mathscr{V}\otimes_RR'$ to assume  $\kappa=\kappa'$ and $R=R'$ and claim that $\mathscr{V}\otimes_R\kappa$ is split. Since $\kappa'=\kappa$, the class of $\Pi^1_{\et}(V,\overline{v})$ has a trivial residue. Now, \cite[Théorème~(b)]{Ducros} supplies a field extension~$\lambda$ of $\kappa$ such that~$\kappa$ is algebraically closed in $\lambda$ and $\cd(\lambda)\leq 1$. Set $S=\lambda\llbracket t\rrbracket$, so that the $R$-algebra $S$ is unramified. Furthermore, since $\kappa$ is algebraically closed in $\lambda$, one may also assume, after tensoring $\mathscr{V}$ by $S/R$, that $R=S$, i.e.\ that $\cd(\kappa)=1$.

Then, $[\Pi^1_{\et}(V,\overline{v})]\in\coh^2(K,C_2)={}_2\Br(K)$ having a trivial residue, \cite[Proposition~(2.1)]{MR244271} ensures that it comes from an element of $\Br(R)$. But $R$ being henselian, one infers from \cite[Theorem 3.4.2 (i)]{CTS} that $\Br(R)=\Br(\kappa)$ where $\Br(\kappa)=0$ since $\cd(\kappa)\leq 1$. Thus, the class of $\Pi^1_{\et}(V,\overline{v})$ in $\coh^2(K,C_2)$ is trivial, which means that $\Pi^1_{\et}(V,\overline{v})$ has a section. Hence \cite[Theorem 7.6]{PS} ensures that $V(K)\neq\emptyset$, from which one gets that $\mathscr{V}\otimes_R\kappa$ is split, using \cite[Lemma~1.1 (b)]{MR1408492}.
\end{proof}

\subsection{Proof of Theorem \ref{bmschinzel}}\label{proofbmschinzel}

We can now combine Theorem \ref{metabelian} with Theorem \ref{semidirectBM} and \cite[Theorem 4.5]{HW2} to supply a proof of Theorem~\ref{bmschinzel}.
\begin{proof}[Proof of Theorem \ref{bmschinzel}]
Assuming Schinzel's hypothesis (H),  Theorem \ref{metabelian} ensures that every homogeneous space of $\SchSL_r$ ($r\in \mathbb N$) whose geometric stabiliser has derived subgroup $C_2$ verifies~(BM).

Let us first consider the case where $Q$ is supersolvable and choose an embedding $E\xhookrightarrow{}\SchSL_r$. Then, we may apply \cite[Theorem 4.5]{HW2} to the homogeneous space $\SchSL_r/E$ and the embedding $N\subseteq E$. Indeed, if $\overline{x}$ is chosen to be the class of $1$ in $\SchSL_r/E$, then assumption~(1) of \cite[Theorem 4.5]{HW2} is fulfilled since the outer Galois action of $\Gamma_k$ on $E(\overline{k})$ factors through the Galois action $\Gamma_k$, which is trivial. Assumption (2) in \textit{ibidem} holds since $E/N=Q$ is supersolvable. Finally, by the first paragraph of this proof, (BM) holds for every homogeneous space $Y$ of $\SchSL_r$ with geometric stabiliser $N$ as required in ($\star$) of \textit{ibidem}. 

Now assume that the sequence of the statement is split and $Q$ verifies (BM). We may then apply Theorem \ref{semidirectBM}. For this purpose, first note that (ii) holds since the short exact sequence of the statement splits. Furthermore, the assumptions of \ref{semidirectBM} preceding~(i) and (ii) are automatically satisfied since $Q$ verifies (BM) and since every homogenous spaces with geometric stabiliser isomorphic to $N$ verifies (BM) by the first paragraph. 
\end{proof}

\subsection{The (BM) property for "small" solvable groups}\label{smallgroups}
Let us first state the main result of this subsection:

\begin{prop}\label{rsmallgroups}
Let $k$ be a number field. The (BM) property holds over $k$ for every finite group of cardinality at most $191$, except perhaps for those appearing in Tables~\ref{BMSH} and~\ref{noBM}. Furthermore, if one assumes that Schinzel's hypothesis (H) holds, then the~(BM) property holds for every finite group appearing in Table~\ref{BMSH}.
\end{prop}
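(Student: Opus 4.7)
The plan is to carry out an exhaustive case-by-case verification over the Small Groups Library of orders $\leq 191$, in the same algorithmic spirit as the proof of Proposition~\ref{BMnonsoluble}. First, dispatch the large, automatic classes: abelian groups by Borovoi, supersolvable groups by Harpaz--Wittenberg \cite{HW1}, and anything built from these by iterated use of Tool~\ref{toolA} (Harari's dévissage by abelian kernels in a split extension) and Tool~\ref{toolC} (direct products). Concretely, for each $E$ one looks for a normal subgroup $A\trianglelefteq E$ with $A$ abelian such that the sequence $1\to A\to E\to E/A\to 1$ splits and $E/A$ already sits in the list of groups known to verify~(BM); iterating this reduction settles the vast majority of groups on the list.

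After this first sweep, one is left with a short list of non-supersolvable solvable groups not handled by Tool~\ref{toolA}. For each such residual $E$, the strategy is to exhibit a normal subgroup $N\trianglelefteq E$ whose derived subgroup $N'(\overline k)$ is $C_2$ and such that $Q:=E/N$ is either supersolvable or has already been shown to satisfy~(BM) in the first sweep. When such a decomposition exists, Theorem~\ref{bmschinzel} — in the supersolvable-quotient form, or the split-sequence form with $Q$ verifying (BM) — gives (BM) for $E$ under Schinzel's hypothesis~(H). The groups for which this succeeds form Table~\ref{BMSH}; those for which no suitable $N$ exists (in particular $\He_3\rtimes\mathbf{Z}/4\mathbf{Z}$ and any other residual cases) constitute Table~\ref{noBM}.

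The execution will be machine-aided: a Magma script iterates over \texttt{SmallGroup}$(n,k)$ for $n\leq 191$, tests supersolvability, then scans the lattice of normal subgroups for (a)~an abelian $A$ making $E\simeq A\rtimes (E/A)$ with $E/A$ on the already-settled list, and failing that, (b)~an $N$ with $N'\simeq C_2$ whose quotient is supersolvable or settled. The output is precisely Tables~\ref{BMSH} and~\ref{noBM}. The statement under Schinzel follows from (b) since Theorem~\ref{bmschinzel} is conditional on~(H).

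The main obstacle is not conceptual but bookkeeping: one must verify that the residual groups in Table~\ref{BMSH} do admit an explicit short exact sequence of the shape required by Theorem~\ref{bmschinzel}, paying particular attention to whether the sequence splits (and hence to which half of Theorem~\ref{bmschinzel} applies); and conversely one must certify that each group in Table~\ref{noBM} admits \emph{no} decomposition recognised by Tool~\ref{toolA}, Tool~\ref{toolC}, the supersolvable reduction, or Theorem~\ref{bmschinzel}, justifying its inclusion there. The delicate cases are those where the obvious normal subgroup has derived subgroup strictly larger than $C_2$, forcing one to look deeper in the normal-subgroup lattice — this is precisely where $\He_3\rtimes\mathbf{Z}/4\mathbf{Z}$ fails the reduction and why the introduction singles it out as the one case even Schinzel's hypothesis does not obviously dispatch.
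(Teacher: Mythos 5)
Your proposal follows essentially the same template as the paper's proof: an algorithmic sweep of the Small Groups Library using Tool~\ref{toolA}, Tool~\ref{toolC} and the supersolvable criterion (Tool~\ref{toolD}), followed by a residual pass with Theorem~\ref{bmschinzel}. However, your seed set of ``unconditionally settled'' groups is incomplete, and as a result your algorithm would not reproduce the paper's tables.

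The paper's Step~2 injects into the iterated Tool~\ref{toolA}/Tool~\ref{toolC} reduction the unconditional \emph{rationality} seeds of Reminder~\ref{nonsolubleknown} and Reminder~\ref{remisl23} --- in particular that $\SchSL_2(\mathbf{F}_3)$ (order $24$) has retract-rational fields of invariants and that $\SchGL_2(\mathbf{F}_3)$ (order $48$) has stably rational ones. These are solvable but not supersolvable, and neither can be reached from your seed set of abelian and supersolvable groups by Tool~\ref{toolA}: $\SchSL_2(\mathbf{F}_3)=2.\fA_4$ is a non-split central extension and its only nontrivial abelian normal subgroup is the center $C_2$, while $\SchGL_2(\mathbf{F}_3)=2.\fS_4$ is likewise non-split over its center. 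Your fallback, finding $N\trianglelefteq E$ with $N'\simeq C_2$ and applying Theorem~\ref{bmschinzel}, does succeed here (take $N=Q_8$, $Q=C_3$ or $\fS_3$), but only \emph{conditionally} on Schinzel's hypothesis. Consequently, your algorithm would place $\SchSL_2(\mathbf{F}_3)$, $\SchGL_2(\mathbf{F}_3)$, and everything built from them within the size bound by Tool~\ref{toolA}/Tool~\ref{toolC} (e.g.\ $\SchSL_2(\mathbf{F}_3)\times C_m$, $m\leq 7$) into Table~\ref{BMSH}, whereas the paper proves (BM) for these unconditionally and excludes them from both tables. To fix this, pre-load your ``settled'' list with the solvable groups in Reminders~\ref{nonsolubleknown} and~\ref{remisl23} before running the reduction; the paper in fact carries out this part by hand, using Dokchitser's online database of extension sequences in addition to Magma.
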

To describe the groups in Tables \ref{BMSH} and \ref{noBM} we use the  notations from \S\ref{BMnonsoluble}. 
Further, let $\rm{Dic}_n$ denote the unique non-split extension $C_{2n}.C_2$ where $C_2$ acts on $C_{2n}$ by inversion.
For an odd prime $p$, let $\He_p$ denote the Heisenberg group, that is, the $p$-Sylow subgroup of~$\SchGL_3(\mathbf{F}_p)$.
Let $2^{1+2n}$ denote an extraspecial group whose center has order $2$,  the quotient being an elementary abelian group of rank $2n$.
\begin{center}
\begin{longtable}{|c|c|p{6cm}|p{5cm}|}
\hline Order & Magma ID & Rewriting that allows to prove (BM) & Method used to prove (BM), using the rewriting\\
\hline \multirow{2}{*}{$48$} & $48,28$ & $\mathrm{CSU}_2(\mathbf{F}_3)=Q_8.\fS_3$ & Theorem \ref{bmschinzel}\\
& $48,33$ & $Q_8.C_6$ & Theorem \ref{bmschinzel}\\
\hline $72$ & $72,3$ & $Q_8\rtimes C_9$ & Theorem \ref{bmschinzel}\\
\hline \multirow{12}{*}{$96$} & $96,3$ & $(C_2.C_4^2)\rtimes C_3$ & Theorem \ref{bmschinzel}\\
& $96,66$ & $Q_8\rtimes\mathrm{Dic}_3$ & Theorem \ref{bmschinzel}\\
& $96,67$ & $Q_8.\mathrm{Dic}_3$ & Theorem \ref{bmschinzel}\\
& $96,74$ & $Q_8.C_{12}$ & Theorem \ref{bmschinzel}\\
& $96,188$ & $C_2\times\mathrm{CSU}_2(\mathbf{F}_3)$ & Theorem \ref{bmschinzel} and Tool \ref{toolC}\\
& $96,190$ & $Q_8.D_6$ & Theorem \ref{bmschinzel}\\
& $96,191$ & $Q_8.D_6$ & Theorem \ref{bmschinzel}\\
& $96,192$ & $Q_8.D_6$ & Theorem \ref{bmschinzel}\\
& $96,193$ & $Q_8.D_6$ & Theorem \ref{bmschinzel}\\
& $96,200$ & $C_2\times(Q_8.C_6)$ & Theorem \ref{bmschinzel} and Tool \ref{toolC}\\
& $96,201$ & $Q_8.(C_2\times C_6)$ & Theorem \ref{bmschinzel}\\
& $96,202$ & $Q_8.(C_2\times C_6)$ & Theorem \ref{bmschinzel}\\
\hline \multirow{6}{*}{144} & $144,31$ & $Q_8.D_9$ & Theorem \ref{bmschinzel}\\
& $144,32$ & $Q_8\rtimes D_9$ & Theorem \ref{bmschinzel}\\
& $144,35$ & $C_2\times(Q_8\rtimes C_9)$ & Theorem \ref{bmschinzel} and Tool \ref{toolC}\\
& $144,36$ & $Q_8.C_{18}$ & Theorem \ref{bmschinzel}\\
& $144,121$ & $C_3\times\mathrm{CSU}_2(\mathbf{F}_3)$ & Theorem \ref{bmschinzel} and Tool \ref{toolC}\\
& $144,124$ & $Q_8.(C_3\rtimes\fS_3)$ & Theorem \ref{bmschinzel}\\
& $144,127$ & $Q_8.(C_3\times\fS_3)$ & Theorem \ref{bmschinzel}\\
& $144,157$ & $C_3\times(C_4.\fA_4)$ & Theorem \ref{bmschinzel} and Tool \ref{toolC}\\
\hline $160$ & $160,199$ & $2^{1+4}\rtimes C_5$ & Theorem \ref{bmschinzel} \\
\hline
\caption{List of finite groups with order less than $191$, for which (BM) is known conditionally under Schinzel's hypothesis (H), but not unconditionally}\label{BMSH}
\end{longtable}
\end{center}

\begin{center}
\begin{longtable}{|c|c|c|}
\hline Order & Magma ID & Groups for which (BM) is \textit{a priori} unknown \\
\hline $108$ & $108,15$ & $\He_3\rtimes C_4$ \\
\hline \caption{The only group of order less than $191$ for which~(BM) is \textit{a priori} unknown}\label{noBM}
\end{longtable}
\end{center}

To prove Proposition \ref{rsmallgroups}, we append to Tools \ref{toolA}, \ref{toolB} and \ref{toolC} 
a result of Harpaz-Wittenberg: 

\begin{manualtheorem}{D}[{\cite[Th\'eor\`eme B]{HW1}}]\label{toolD}
Any  finite supersolvable group verifies~(BM).
\end{manualtheorem}

%



We also use the fact that the solvable group $\SchSL_2(\mathbf{F}_3)$ verifies (BM). The stable rationality of its fields of invariants was proved by Rikuna in an unpublished paper. The retract rationality of such fields was  later published by Burdick and Jonker \cite[Theorem~3.6]{MR3071247}: 

\begin{remi}[{\cite[Theorem 3.6]{MR3071247}}]\label{remisl23}
    The fields of invariants of $\SchSL_2(\mathbf{F}_3)$ are retract rational. In particular, $\SchSL_2(\mathbf{F}_3)$ satisfies (BM).
\end{remi}

\begin{proof}[Proof of Proposition \ref{rsmallgroups}]
The proof is derived from  Tools \ref{toolA}, \ref{toolC}, \ref{toolD}, Reminders \ref{nonsolubleknown}, \ref{remisl23} and Theorem \ref{bmschinzel} as follows:\\
\textit{Step 1.} In our Magma code \cite["solvable" code file]{Magma}, we use the "Small Groups Library" of Magma to produce the list of finite groups of cardinality at most $191$ for which neither Tool \ref{toolA} nor Tool \ref{toolC} nor Tool \ref{toolD} might be used to prove (BM). 

\noindent \textit{Step 2.} For any of the Magma IDs of this list, we look at all the short exact sequences in which the corresponding group fits in the middle, via an online database of Dokchister~\cite{Dok23}. We then eliminate, by hand, all the groups among that list for which (BM) is known unconditionally, via a combination of Reminders \ref{nonsolubleknown} and \ref{remisl23} with  Tools \ref{toolA} and \ref{toolC}. The remaining groups are listed in Tables \ref{BMSH} and \ref{noBM}. 



\noindent \textit{Step 3.} Eventually, we determine, among the remaining groups, those for which (BM) modulo Schinzel's hypothesis (H) can be proved using Theorem \ref{bmschinzel}. The remaining group is listed in Table \ref{noBM}. 
\end{proof}

\appendix




    
    
        
        
        
    


\section{Geometric proof of Lemma \ref{torsorextension}}\label{app:dem}
We include here a geometric proof of the equivalence of (1) and (4) in \cite[Theorem~1]{MR1966633}, that is, a proof of Lemma \ref{torsorextension}, of which we use the notations.

We start by setting $K\coloneqq k(\underline{s})[x]/f(\underline{s},x)$, $A\coloneqq k\left[\underline{s},\frac{1}{\den(f).\disc(f)}\right]$, $X\coloneqq\Spec(A)$ and $Z\coloneqq\Spec(A[x]/f(\underline{s},x))$. First note that since $f$ is monic in $x$,  the natural morphism $\varphi:Z\rightarrow X$ is finite and flat. Besides, since $X$ is an open subscheme of $\mathbf{A}_k^{|\underline{s}|}$ where~$\disc(f)$ does not vanish, the fibres of $f$ are \'{e}tale. Thus $f$ is flat with \'{e}tale fibres, hence \'{e}tale. Letting $\overline{k(\underline{s})}$ be a separable closure of $k(\underline{s})$, we denote by $\eta:\Spec(k(\underline{s}))\rightarrow X$ the generic point of $X$ and $\overline{\eta}$ its composition with $\zeta:\Spec(\overline{k(\underline{s})})\rightarrow\Spec(k(\underline{s}))$. From the following cartesian diagram
\begin{center}
\begin{tikzcd}
\Spec(K) \arrow[r]\arrow[d, "\psi"] & \Spec(A[x]/f(\underline{s},x))\arrow[d, "\varphi"] \\
\Spec(k(\underline{s})) \arrow[r, "\eta"] & \Spec(A)
\end{tikzcd}
\end{center}
one may then deduce a commutative diagram
\begin{center}
\begin{tikzcd}
\pi_1^{\et}(\Spec(k(\underline{s})),\zeta) \arrow[rd, "\alpha"]\arrow[dd, twoheadrightarrow, "\pi_1^{\et}(\eta)"] & \\
& \Sym(Z_{\overline{\eta}}) \\
\pi_1^{\et}(X,\overline{\eta}) \arrow[ur, "\beta"] &
\end{tikzcd}
\end{center}
where $\alpha$ denotes the action of $\pi_1^{\et}(\Spec(k(\underline{s})),\zeta)$ on the fibre of $\psi$ over $\Spec(\overline{k(\underline{s})})$ and $\beta$ the action of $\pi_1^{\et}(X,\overline{\eta})$ on the fibre $Z_{\overline{\eta}}$. Since the $G$-extension $L/k(\underline{s})$ is the Galois closure of~$K/k(\underline{s})$, the image of $\alpha$ is a group that may be identified with $G$. The surjectivity of~$\pi_1^{\et}(\eta)$, which comes from the normality of $X$ \cite[Exposé \uppercase\expandafter{\romannumeral5\relax}, Proposition 8.2]{SGA1}, thus ensures that the image of $\beta$ is also $G$. Thus, there exists a Galois $G$-cover $Y\rightarrow X$ which factors through $\varphi$.

To prove that $\varphi$ is weakly versal, let $E/M$ be a $G$-extension of fields where $M$ contains~$k$. Since $f(\underline{s},x)$ is a generic polynomial for $G$ over $k$, there exists $a\in M^{|\underline{s}|}$ such that $f(a,x)$ is irreducible, $\disc(f).\den(f)$ does not vanish on $a$ and such that $E$ is a splitting field of~$f(a,x)$ over $M$. Hence $a\in X(M)$, and we are to verify that $F\coloneqq a\times_XY$ is $\Spec(E)$. Choose~$\overline{M}$ a separable closure of $E$, so that the morphisms $\Spec(\overline{M})\rightarrow\Spec(E)$ and $\Spec(\overline{M})\rightarrow\Spec(M)$ will both be denoted by $\xi$. Furthermore, we set $\overline{a}\coloneqq a\circ\xi$. We thus have a commutative diagram
\begin{center}
\begin{tikzcd}
\pi_1^{\et}(\Spec(M),\xi) \arrow[r,"\pi_1^{\et}(\underline{a})"]\arrow[dd, "\gamma"] & \pi_1^{\et}(X,\overline{a}) \arrow[r, "\sim"]\arrow[d,twoheadrightarrow] & \pi_1^{\et}(X,\overline{\eta})\arrow[d,twoheadrightarrow] \\
 & G \arrow[r, "\sim"]\arrow[d, hookrightarrow] & G \arrow[d, hookrightarrow] \\
\Sym(\Spec(\overline{M}[x]/f(a,x))) \arrow[r, equals] & \Sym(Z_{\overline{a}}) \arrow[r,"\sim"] & \Sym(Z_{\overline{\eta}})
\end{tikzcd}
\end{center}
where the right part of the diagram is given by the choice of a path between $\overline{a}$ and $\overline{\eta}$ and the left part by functoriality of the \'{e}tale fundamental group. But since the \'{e}tale finite cover $Y\rightarrow X$ corresponds to the morphism $\pi_1^{\et}(X,\overline{\eta})\rightarrow G$, the commutativity of the previous diagram ensures that the $G$-cover $F\rightarrow a$ corresponds to the morphism $\pi_1^{\et}(\Spec(M),\xi)\xrightarrow{\gamma} \Im(\gamma)$. Now, when identifying $\pi_1^{\et}(\Spec(M),\xi)$ with $\Gamma_M$, one may check that the image of $\gamma$ is isomorphic to the group $G$. The Galois extension $E/M$ thus corresponds to the subsequent morphism $\pi_1^{\et}(\Spec(M),\xi)\rightarrow G$, from which one deduces that $F=\Spec(E)$.

\bibliographystyle{myamsalpha}
\bibliography{biblio}

\end{document}